\documentclass[a4paper,11pt]{article}

\usepackage[english]{babel}
\usepackage[english]{translator}
\usepackage[T1]{fontenc}
\usepackage[utf8]{inputenc}

\usepackage{amsmath}
\usepackage{amssymb}
\usepackage{amsfonts}
\usepackage{amstext}
\usepackage{amsthm}
\usepackage{bbm}

\usepackage{graphicx}
\usepackage{color}
\usepackage{psfrag}
\usepackage{subfigure}
\usepackage{float}

\usepackage{enumerate}
\usepackage{enumitem}
\usepackage{marvosym}
\setlength{\marginparwidth}{2cm}

\usepackage{hyperref}

\usepackage[
  hmarginratio={1:1},
  vmarginratio={1:1},
  textwidth=460pt,
  heightrounded,
  bottom=3.65cm,
  top=3.65cm,
]{geometry}

\hypersetup{
  pdffitwindow=false,
  pdfhighlight=/O,
  pdfnewwindow,
  colorlinks=true,
  citecolor=red,
  linkcolor=blue,
  menucolor=blue,
  urlcolor=blue,
  pdfpagemode=UseOutlines,
  bookmarksnumbered=true,
  linktocpage,
  pdfkeywords={},
  pdfcreator={pdflatex},
  pdfproducer={LaTeX with hyperref}
}

\newcommand{\D}{\mathcal{D}}
\newcommand{\C}{\mathbb{C}}
\newcommand{\R}{\mathbb{R}}

\newcommand{\N}{\mathbb{N}}

\newcommand{\GL}{\mathrm{GL}}
\renewcommand{\Re}{\mathrm{Re}\,}
\renewcommand{\Im}{\mathrm{Im}\,}

\renewcommand{\L}{\mathcal{L}}

\newcommand{\diag}{\mathrm{diag}}

\DeclareMathOperator*{\esssup}{ess\,sup\,}

\renewcommand{\i}{\mathrm{i}} 
\renewcommand{\d}{\,\mathrm{d}}
\newcommand{\x}{\mathbf{x}}
\renewcommand{\H}{\mathcal{H}}
\newcommand{\W}{\mathcal{W}}
\newcommand{\I}{\mathcal{I}}
\renewcommand{\GL}{\text{\tiny GL}}
\newcommand{\Lo}{\text{\tiny Lo}}
\newcommand{\Lop}{\text{\tiny Lo+1}}

\newcommand{\ut}{\tilde{u}}

\newcommand{\quotes}[1]{``#1''}


\newcommand{\sect}
{
  \setcounter{equation}{0}
  \setcounter{figure}{0}
  \section
}

\theoremstyle{definition}
\newtheorem{definition}{Definition}[section]

\newtheorem{remark}[definition]{Remark}

\theoremstyle{plain}
\newtheorem{theorem}[definition]{Theorem}
\newtheorem{lemma}[definition]{Lemma}
\newtheorem{corollary}[definition]{Corollary}
\newtheorem{proposition}[definition]{Proposition}

\allowdisplaybreaks

\begin{document}
\title{Uniform $L^{\infty}$-bounds for energy-conserving higher-order time integrators for the Gross-Pitaevskii equation with rotation}
\setlength{\parindent}{0pt}

\begin{center}
{\Large Uniform $L^{\infty}$-bounds for energy-conserving \\ higher-order time integrators for the Gross-Pitaevskii equation with rotation} \\
\vspace{12pt}
Christian D{\"o}ding\footnotemark[1] and Patrick Henning\footnotemark[2] \\
\vspace{12pt}
October 4, 2022
\end{center}

\footnotetext[1]{Department of Mathematics, Ruhr-University Bochum, Universit\"atsstr. 150, 44801 Bochum, Germany, \\ e-mail: \textcolor{blue}{christian.doeding@rub.de}.}
\footnotetext[2]{Department of Mathematics, Ruhr-University Bochum, Universit\"atsstr. 150, 44801 Bochum, Germany, \\ e-mail: \textcolor{blue}{patrick.henning@rub.de}.}

\noindent
\begin{center}
\begin{minipage}{0.8\textwidth}
  {\small
    \textbf{Abstract.}
 In this paper, we consider an energy-conserving continuous Galerkin discretization of the Gross--Pitaevskii equation with a magnetic trapping potential and a stirring potential for angular momentum rotation. 
 The discretization is based on finite elements  in space and time and allows for arbitrary polynomial orders. It was first analyzed in [\emph{O.~Karakashian, C.~Makridakis; SIAM J. Numer. Anal.~36(6):1779--1807, 1999}] in the absence of potential terms and corresponding a priori error estimates were derived in $2D$. In this work we revisit the approach in the generalized setting of the  Gross--Pitaevskii equation with rotation and we prove uniform $L^{\infty}$-bounds for the corresponding numerical approximations in $2D$ and $3D$ without coupling conditions between the spatial mesh size and the time step size. With this result at hand, we are in particular able to extend the previous error estimates to the $3D$ setting while avoiding artificial CFL conditions.
}
\end{minipage}
\end{center}

\vspace{12pt}
\noindent
\textbf{Key words.} Nonlinear Schr\"odinger equation, Gross-Pitaevskii equation, Bose-Einstein condensate, finite element method, continuous Galerkin method.

\vspace{12pt}
\noindent
\textbf{AMS subject classification.}  35Q55, 65M60, 65M15, 81Q05.
 
\sect{Introduction}
When a dilute bosonic gas is cooled down near to the absolute zero temperature at 0 Kelvin, a so-called Bose-Einstein condensate (BEC) is formed \cite{Bose24,Einstein24,DMA95,AEM95}.  Such a BEC is an extreme state of matter which behaves, in its entity, like a macroscopic \quotes{super particle} and which hence allows to study quantum mechanical phenomena on observable scales. The central equation for mathematically modelling the dynamics of Bose--Einstein condensates is the Gross-Pitaevskii equation (GPE) \cite{Gross61,Lieb01,Pitaevskii61}. It seeks a scalar complex-valued wave function $u = u(\x,t) \in \C$ such that
	\begin{align} \label{GPE0}
		\i  \partial_t u & = -\Delta u + \i \mathbf{\Omega} \cdot ( \mathbf{x} \times \nabla) u + Vu + \beta |u|^2u
	\end{align} 
and together with appropriate initial and boundary conditions. Given are the real-valued function $V = V(\x) \in \R$, the vector $\mathbf{\Omega} \in \R^3$ and a real scalar $\beta$ which we assume to be positive in this work (i.e. $\beta \ge 0$). In the context of BECs, the solution $u$ to the GPE describes the quantum state of the condensate, $|u|^2$ is its (physically observable) density and the function $V$ has the role of a magnetic trapping potential that confines the BEC. Furthermore, the parameter $\beta$ encodes information about the number and the type of bosons. In particular, it characterizes if the interaction between the particles in the BEC are repulsive ($\beta > 0$) or attractive ($\beta < 0$). The term $\i \, \mathbf{\Omega} \cdot (\x \times \nabla)u$ models a stirring potential and hence describes an angular rotation of the BEC with angular velocity $\mathbf{\Omega} \in \R^3$. Taking the quantum mechanical momentum operator $\mathbf{P} = -\i \nabla$ into account as well as the angular momentum operator $\mathbf{L} = \x \times \mathbf{P} = -\i (\x \times \nabla)$ we can write $\i \, \mathbf{\Omega} \cdot (\x \times \nabla) = - \mathbf{\Omega} \cdot \mathbf{L}$. As a common simplification, we assume that the BEC rotates around the $z$-axis such that $\mathbf{L} = (0,0, \L_z)$ with $\L_z = -\i (x \partial_y - y \partial_x)$ and $\mathbf{\Omega} = (0,0,\Omega)$ for $\Omega \in \R$. In this case, the rotational term simplifies to $\i \, \mathbf{\Omega} \cdot (\x \times \nabla)u = - \Omega \, \L_z u$. From a physical perspective it is interesting to consider rotating Bose--Einstein condensates as such a configuration allows for the appearance of quantized vortices as a sign of the superfluid behavior of a BEC \cite{ARV01}. 

In the following we specify the precise initial-boundary-value problem for the GPE that we are considering in this work: Suppose $\D \subset \R^{d}$, $d = 1,2,3$, is a bounded domain and $I = [0,T) \subset \R$ a time interval. Then we consider the initial-boundary-value problem for $u: \overline{\D} \times I \rightarrow \C$ given by
	\begin{align} \label{GPE}
	\begin{alignedat}{2}
		\i  \partial_t u & = -\Delta u - \Omega \L_z u + Vu + \beta |u|^2u && \quad \text{in } \D \times I, \\
		u  & = 0 && \quad \text{on } \partial \D \times I, \\
		u & = u_0 && \quad \text{on } \D \times \{ t = 0 \}
	\end{alignedat}
	\end{align}
for a suitable initial value $u_0$. Due to the rotational term this problem only makes sense in the cases $d = 2,3$. Nevertheless, we include the one-dimensional case for which we assume that the rotational term is neglected. In particular, we formally set $\L_z = 0$ if $d = 1$.

There is a rich literature on the numerical treatment of \eqref{GPE} that is too comprehensive to discuss it in detail. Exemplarily, we refer to \cite{ABB13d,AST21,Bao14,BaJiMa03,CCW20,Lub08,ORS21,Tha12b,Zou01} and the references therein to get an overview over the field. One particularly important aspect is that the analytical equation \eqref{GPE} conserves the total energy of the system and it was numerically observed \cite{NLSComparison} that an analogous discrete energy conservation can be a crucial property of numerical schemes to get reliable approximations in practical situations. Time integrators that conserve a modified energy are for example the popular Besse relaxation scheme \cite{Bes04,BeDeDuLa21,Zouraris20} and the family of exponential Runge--Kutta schemes proposed in  \cite{FuHuZh22}. Time integrators that conserve the exact energy (up to spatial discretization errors) are more rare and include the Crank--Nicolson method based on the averaging of densities \cite{ADK91,BaC13,HeP17,HeWa21,San84} and the continuous Galerkin time stepping proposed in \cite{Makridakis99}. The latter method is also the numerical scheme that we shall consider in this paper, as it is not only energy-conservative, but it also allows for arbitrarily fast convergence rates for smooth solutions. This makes it very attractive for practical computations.

The method was first analyzed by Karakashian and Makridakis \cite{Makridakis99} who considered the GPE \eqref{GPE} for the case $\Omega = 0$ and  $V \equiv 0$ and in space dimension $d=2$. The authors proved well-posedness of the scheme together with a priori error estimates in the $L^{\infty}(L^2)$-norm and the $L^{\infty}(H^1)$-norm. If the spatial discretization remains unchanged during the time stepping, the error analysis was established under a coupling condition between the time step size $\tau$ and the spatial mesh size $h$ of the form $\log(h) \tau^{q-1} \rightarrow 0$ for $h$,$\tau \rightarrow 0$. Here, $q$ denotes the polynomial degree used for the time integration, which also shows that the analysis is not applicable to the lowest order case $q=1$. The coupling condition is a result of the proof technique, which requires bounding the growth of the nonlinear term $ |u|^2u$ by introducing a suitable truncation function. After that, the error between the exact solution and the truncated numerical approximation is analyzed and corresponding error estimates are derived. Finally, one needs to argue that the corresponding (truncated) numerical approximations remain uniformly bounded in $L^{\infty}(L^{\infty})$ independent of the truncation, so that estimates remain valid for the original scheme without truncation. To establish these uniform $L^{\infty}(L^{\infty})$-bounds, the authors used the error estimate for the truncated approximations together with an inverse inequality in finite element spaces. This causes the logarithmic coupling condition of the form $\log(h) \tau^{q-1} \rightarrow 0$, where $\log(h)$ enters through the inverse inequality. If the same strategy would be used for $d=3$, the inverse inequality would introduce a factor of $h^{-1/2}$ and the coupling condition would become $h^{-1/2} \tau^{q-1} \rightarrow 0$ as $h,\tau \rightarrow 0$.

In this paper we propose a different strategy to obtain the desired $L^{\infty}(L^{\infty})$-bounds both in $2D$ and $3D$ which does not introduce any coupling conditions. For that we apply a technique that was introduced by Li and Sun \cite{LiSu13} to remove mesh coupling conditions for a semi-implicit Euler discretization of nonlinear parabolic problems. The idea of that technique is to split the error based on a semi-discrete auxiliary problem (which is analytical in space and discrete in time) in order to separate the spatial and temporal discretization and to only apply inverse inequalities on terms that purely depend on spatial errors. The first application of this technique to nonlinear Schr\"odinger equations such as the GPE was established by Wang \cite{Wan14} who considered an Adams--Bashforth-type linearization of the Crank-Nicolson method. Applications to the energy-conservative Crank--Nicolson method were developed in \cite{HeP17,HeWa21}. In all cases, lowest order finite element spaces were considered. In fact, one of the major drawbacks of the technique is that it is not directly applicable to higher order finite element spaces as considered in this work. The reason is that this would also require higher order regularity of the solutions to the auxiliary problem and also corresponding stability bounds in higher order Sobolev norms. Such stability estimates for the auxiliary problem are however only available in $H^2(\D)$, which would hence only yield optimale convergence rates for $\mathcal{P}^1$-Lagrange finite elements or for particular generalized FE spaces that exploit only low regularity \cite{HeW22}. In this work we solve this issue by only applying the error splitting technique to obtain $L^{\infty}(L^{\infty})$-bounds for the discrete approximations, as this does not require higher regularity of the semi-discrete auxiliary solution. Once the $L^{\infty}(L^{\infty})$-bounds are available, the optimal rates for higher order finite element spaces can be obtained with the same strategy as in the work by Karakashian and Makridakis \cite{Makridakis99}.

We finish the introduction with an outline of this paper. In the following section \ref{sectionpreliminaries} we state our first assumptions on \eqref{GPE0} and collect important properties of the rotating GPE after introducing our basic notation. Then we proceed in section \ref{sectionnumericalmethod} with describing our spatial and temporal discretization for problem \eqref{GPE0} and we formulate the fully-discrete method that we are considering in this work. Furthermore, we prove that the numerical method is energy-conserving in time and then state our main result concerning the uniform $L^\infty(L^\infty)$-bounds for the numerical approximations. From there we conclude the a priori error estimates w.r.t. $L^\infty(L^2)$ and $L^\infty(H^1)$ as in \cite{Makridakis99}. In section \ref{section:reformulation} we give a reformulation of the method which, on the one hand, is used in the further error analysis and, on the other hand, gives access to a simple implementation. Section \ref{sec:truncetedproblem} is then devoted to a fully-discrete truncated problem where the cubic nonlinearity is replaced by the aforementioned truncated nonlinearity. We prove well-posedness of this problem and continue with the corresponding semi-discrete truncated problem in section \ref{sec:errorsemi}. For the semi-discrete problem error estimates are derived w.r.t. to $L^\infty(H^1)$, $L^\infty(H^2)$ and $L^\infty(L^\infty)$. In section \ref{section:proof-main-result} we prove our main result. There we start by showing error estimates of the fully-discrete problem to the projection of the semi-discrete problem onto the finite element space. After that, the error splitting is used to prove the main result and the section is closed with a uniqueness result for the fully-discrete approximation. Finally, in section \ref{sec:numerics} we test the performance of the method in numerical experiments.

\sect{Preliminaries}
\label{sectionpreliminaries}

In this section we work out some preliminaries regarding the GPE and state our first assumptions on the problem. Before doing so, we introduce our basic notation: With $\x = (x,y,z)$ (resp. $\x = (x,y)$ and $\x = x$) we denote the spatial coordinate in $\D \subset \R^3$ (resp. $\D \subset \R^2$ and $\D \subset \R$). For a complex number $x \in \C$ we write $\overline{x}$ for its complex conjugate and $\x \cdot \mathbf{y} = \sum_{i = 1}^n \overline{x_i} y_i$ is the scalar product on $\C^n$ such that $|\x| = \sqrt{\x \cdot \x}$ is a norm. For measurable functions with values in the complex plane $\C$ we define the Lebesgue spaces $L^p(\D,\C)$, $1 \le p \le \infty$ in the usual sense and denote by $\| u \|_{L^p(\D)}$ the norm in $L^p(\D,\C)$. Usually we neglect the image space when its meaning is clear from the context, i.e., $L^p(\D,\C) = L^p(\D)$. The inner product on the complex Hilbert space $L^2(\D,\C)$ is defined by $(u,v) = \int_\D \overline{u} \,v \d\x$ and
we denote by $W^{k,p}(\D,\C) = W^{k,p}(\D)$ the standard Sobolev spaces on $\D$ with norm and semi-norm given by
\begin{align*}
	\| u \|_{W^{k,p}(\D)} = \Big( \sum_{|\alpha| \le k} \| D^\alpha u \|_{L^p(\D)}^p \Big)^{1/p}, \quad | u |_{W^{k,p}(\D)}^2 = \Big( \sum_{|\alpha| = k} \| D^\alpha u \|_{L^p(\D)}^p \Big)^{1/p}
\end{align*}
and with $\| u \|_{W^{k,\infty(\D)}} = \max_{|\alpha| \le k} \| D^\alpha u \|_{L^\infty(\D)}$ in the case $p = \infty$. If $p = 2$ we also write $H^{k}(\D) = W^{k,2}(\D)$ and denote by $H^1_0(\D,\C) = H^1_0(\D)$ the space of functions $u \in H^1(\D)$ with zero trace. Furthermore, $H^{-1}(\D,\C) = H^{-1}(\D)$ is defined as the dual of $H^1_0(\D)$. For Bochner-measurable functions with values in a Banach space $X$ we define the Bochner-Lebesgue spaces $L^p(I,X) = \{ u : I \rightarrow X \text{ Bochner-measurable}: \| u \|_{L^p(I,X)} < \infty\}$ for $1 \le p \le \infty$ and where
\begin{align*}
	& \| u \|_{L^p(I,X)}:= \Big( \int_I \| u(t) \|_X^p \d t \Big)^{1/p}, \quad 1 \le p < \infty, \\
	& \| u \|_{L^\infty(I,X)} : = \esssup_{t \in I} \| u(t) \|_X.
\end{align*}
Finally, we define the Bochner-Sobolev spaces $W^{k,p}(I,X) = \{ u \in L^p(I,X): \partial_t^j u \in L^p(I,X),\, j = 1,\dots,k\}$ in the usual sense (see e.g. \cite{Bartels15} and \cite{Cazenave03}) and equip them with the usual norms denoted by $\| \cdot \|_{W^{k,p}(I,X)}$. \\

Now we define the notion of a weak solution of the Gross-Pitaevskii problem in a rotating frame: 

\begin{definition}[weak solution of the GPE] Let $u_0 \in H^1_0(\D)$ be a given initial value. Then $u \in L^\infty(I,H^1_0(\D))$ with $\partial_t u \in L^\infty(I,H^{-1}(\D))$ is a weak solution of the GPE, if $u(\cdot,0) = u_0$ and
	\begin{equation} \label{weakGPE}
 		(\i \partial_t u,  v) = (\nabla u,  \nabla v) - \Omega(\L_z u,v) + (Vu,v) + \beta (|u|^2 u,v)
	\end{equation} 
holds for all $v \in H^1_0(\D)$ and almost every $t \in I$.
\end{definition}

Note that if $u$ is a weak solution of the GPE then $u \in C(I,L^2(\D))$ and the notion of the initial value $u(\cdot,0) = u_0$ is well-defined. Next we collect our basic assumptions on the system \eqref{weakGPE} which guarantee well-posedness in particular cases and which are necessary for our further analysis:
\begin{enumerate}[label={(A\arabic*)}]
\item \label{A1} The spatial domain $\D \subset \R^d$, $d = 1,2,3$, is convex, bounded and polyhedral. The time interval is given by $I = [0,T)$ for some $T > 0$.
\item \label{A2} The interaction parameter $\beta$ is real and positive, i.e., $\beta \ge 0$.
\item \label{A3} The potential $V$ satisfies $V \in L^\infty(\D,\R)$ and $V(\x) \ge 0$ for almost all $\x \in \D$.
\item \label{A4} The rotation velocity $\Omega$ is real and there is $\lambda > 0$ such that
\begin{align*}
	V(\x) - \frac{1 + \lambda}{4} \Omega^2 (x^2 + y^2) \ge  0 \quad \text{for almost all } \x \in \D.
\end{align*}
\end{enumerate}

Assumption \ref{A4} states that trapping frequencies of the potential $V$ (in $x$- and $y$-direction) are sufficiently large compared to the rotation frequency $\Omega$. Physically speaking, this ensures that the centrifugal forces cannot become strong enough to destroy the condensate. An equivalent condition can be found in \cite{BWM05}, where it is shown that ground states of BECs exist if \ref{A4} is fulfilled and that they do not exist if $V(\x) - \tfrac{1}{4} \Omega^2 (x^2 + y^2)<0$ for harmonic trapping potentials of the form $V(\x)=\tfrac{1}{2}(\gamma_x^2 x^2 + \gamma_y^2 y^2)$. The case $V(\x) = \tfrac{1}{4} \Omega^2 (x^2 + y^2)$ is a borderline case, where the existence of stable BECs is unclear. \\

Well-posedness of the Cauchy problem \eqref{weakGPE} on bounded domains has been investigated in the classical textbook by Cazenave \cite{Cazenave03} for the GPE without rotation ($\Omega = 0$). In this case there exists a maximal (maybe infinite) time $T = T(u_0, \D) > 0$ such that \eqref{weakGPE} admits a (strong) solution $u$ satisfying $u \in C(I,H^1_0(\D))$ and $\partial_t u \in C(I,H^{-1}(\D))$. In addition, this solution is unique in one and two space dimensions. For the GPE with rotation ($\Omega \neq 0$) much less is known: In \cite{Antonelli12} well-posedness was shown in two and three space dimensions for the case $\D = \mathbb{R}^d$. An earlier work concerning the case $\D = \R^3$ with more restrictive conditions on $\Omega$ and $V$ is \cite{Hao07}. However, to the best of our knowledge well-posedness on bounded domains for the GPE with rotation is still open in the literature. \\

Equation \eqref{weakGPE} can be formally derived from the energy functional of the system given by
	\begin{align} \label{energy}
		E(u) := \frac{1}{2} \int_{\D} |\nabla u|^2 - \Omega \, \overline{u} \, \L_z u + V |u|^2 + \frac{\beta}{2}|u|^4 \d\x, \quad u \in H^1_0(\D).
	\end{align}
If we consider (for the moment) $L^2(\D)=L^2(\D,\C)$ as a \emph{real} Hilbert space with (real) scalar product $(u,v)_\Re : = \Re \int_{\D} \overline{u} v \d \x$, then the energy functional $E$ is Fr\'echet differentiable on the \emph{real} Hilbert space $H^1_0(\D)$ with derivative $E' : H^1_0(\D) \rightarrow H^{-1}(\D)$ given by
\begin{align*}
	\langle E'(u), v \rangle_\Re = \Re \big( (\nabla u,  \nabla v) - \Omega (\L_z u,v) + (Vu, v) + \beta (|u|^2 u,v) \big).
\end{align*}
Here $\langle \cdot, \cdot \rangle_\Re$ denotes the dual pairing between the real Hilbert spaces $H^{-1}(\D)$ and $H^1_0(\D)$. With this, \eqref{weakGPE} can be written as the Hamiltonian system
\begin{align} \label{symplectic}
	\omega ( \partial_t u, v)  = \langle E'(u), v \rangle_\Re \quad \forall v \in H^1_0(\D)
\end{align}
with the symplectic form $\omega: H^1_0(\D) \times H^1_0(\D) \rightarrow \R$ given by $\omega(v,w) := (\i v, w)_\Re$. The system \eqref{symplectic} is symplectic which means that the flow of \eqref{symplectic} preserves the symplectic form $\omega$ (see e.g. \cite{Hairer18}). In addition, when testing \eqref{symplectic} with $\partial_t u$ it is easily seen that the energy \eqref{energy} is conserved over time. Furthermore, when we test \eqref{symplectic} with $\i u$ it follows that the mass of the system
given by $\| u(t) \|_{L^2(\D)}^2$ is also conserved over time. From the numerical point of view, it can be extremely important to use integrators that preserve one or more of the above mentioned structures; i.e. energy, mass or the symplectic structure of the system. However, it is well-known that in the general nonlinear case there exists no time integrator (that is not exact) that conserves both the energy and the symplectic structure of the system simultaneously; see e.g. \cite{Sanz-Serna88} and \cite{Marsden88}. The Crank-Nicolson time integrator that was used in \cite{HenningMalqvist17} conserves both energy and mass  and turned out to be successful in simulating the dynamics of rotating BECs. Nevertheless, high order integrators (order $p > 2$) typically cannot conserve both energy and mass of the system at the same time. Numerical experiments indicate that, in practical situations, the conservation of energy should be prioritized over the conservation of mass, cf. \cite{NLSComparison}. As we will see in section \ref{sectionnumericalmethod}, the high-order time integrator that we consider in this work is in fact energy-conserving. \\[0.5em]
Next we introduce the conjugate-linear operator
\begin{align*}
	\H: H^1_0(\D) \rightarrow H^{-1}(\D), \quad \langle \H u, v \rangle = (\nabla u, \nabla v ) - \Omega( \L_z u, v ) + (Vu, v)
\end{align*}
where from now on $H^1_0(\D)$ and $H^{-1}(\D)$ are again complex Hilbert spaces with the complex dual pairing $\langle \cdot, \cdot \rangle$. Note that for $u \in H^1_0(\D) \cap H^2(\D)$, we can identify $\H u$ uniquely with an $L^2(\D)$-function, where $\H u = - \Delta u - \Omega \, \L_z u + V u$.
As in \cite{HenningMalqvist17}, we will show that the operator $\H$ induces the sesquilinear form
\begin{align*}
	(u,v)_\H := \int_\D \overline{(\nabla u - \tfrac{\i}{2}b u)} \cdot (\nabla v - \tfrac{\i}{2}b v) + (V - \tfrac{1}{4}|b|^2 )\overline{u} v \d \mathbf{x}, \quad b(\mathbf{x}) = \begin{cases} 0, & d = 1, \\ \Omega (y,-x)^\top, & d = 2, \\
	\Omega (y,-x,0)^\top, & d = 3. \end{cases}
\end{align*}
In particular, we have the following lemma:
\begin{lemma} \label{elliptic}
Under the assumptions \ref{A1}, \ref{A3} and \ref{A4} the sesquilinear form $(\cdot,\cdot)_\H$ defines a scalar product on $H^1(\D)$ and the induced norm $\| \cdot \|_\H$ is equivalent to the standard $H^1$-norm, i.e. for $\| v \|_\H = \sqrt{(v,v)_\H}$ there are $C_1,C_2 > 0$ such 	that
	\begin{align*}
		C_1 \| v \|_{H^1(\D)} \le \| v \|_{\H} \le C_2 \| v \|_{H^1(\D)} \quad \forall v \in 						H^1(\D).
	\end{align*}
Moreover, $\H$ is continuous, uniformly elliptic on $H^1_0(\D)$ and the following statements hold:
\begin{enumerate}
	\item[i)] $(u,v)_\H = \langle \H u, v \rangle$ for all $u,v \in H^1_0(\D)$.
	\item[ii)] $\| v \|_{H^2(\D)} \le C \| \H v \|_{L^2(\D)}$ for all $v \in H^1_0(\D) \cap H^2(\D)$ and some $C > 0$. 
\end{enumerate}
\end{lemma}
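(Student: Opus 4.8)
The plan is to establish the three claims—equivalence of norms, the scalar product property, and the two itemized statements—by first rewriting the sesquilinear form in the ``magnetic'' shape and then invoking standard elliptic theory. The key algebraic observation is that the rotational term $-\Omega(\L_z u, v)$ can be absorbed into a shifted gradient. Concretely, writing $\L_z u = -\i(x\partial_y - y\partial_x)u$ and using the field $b$ defined in the excerpt, a direct computation shows
\begin{align*}
	(\nabla u, \nabla v) - \Omega(\L_z u, v) = \int_\D \overline{(\nabla u - \tfrac{\i}{2}b u)}\cdot(\nabla v - \tfrac{\i}{2}b v) - \tfrac{1}{4}|b|^2 \overline{u}v \d\x,
\end{align*}
so that adding $(Vu,v)$ reproduces exactly $(u,v)_\H$. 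This identity simultaneously yields statement i) for $u,v \in H^1_0(\D)$, since $\langle \H u, v\rangle$ is by definition the left-hand side.

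Next I would prove the norm equivalence. For the upper bound $\|v\|_\H \le C_2 \|v\|_{H^1}$ I would expand the magnetic form, use $|b(\x)| \le C$ (valid because $\D$ is bounded by \ref{A1}) together with $V \in L^\infty(\D)$ from \ref{A3}, and control the cross terms by Cauchy--Schwarz and Young's inequality; everything is then dominated by $\|\nabla v\|_{L^2}^2 + \|v\|_{L^2}^2$. For the lower bound $C_1\|v\|_{H^1} \le \|v\|_\H$ I would exploit assumption \ref{A4}: the potential satisfies $V - \tfrac{1+\lambda}{4}\Omega^2(x^2+y^2) \ge 0$, and since $\tfrac14|b|^2 = \tfrac14\Omega^2(x^2+y^2)$, the term $(V - \tfrac14|b|^2)|u|^2$ inherits a strictly positive remainder proportional to $\lambda$. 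This coercivity margin, after absorbing the shifted gradient $\|\nabla v - \tfrac{\i}{2}bv\|_{L^2}^2 \ge 0$ and applying a Poincaré-type argument, gives a lower bound controlling both $\|\nabla v\|_{L^2}$ and $\|v\|_{L^2}$. Once $\|\cdot\|_\H$ is equivalent to $\|\cdot\|_{H^1}$ and is manifestly Hermitian and nonnegative (being a sum of a squared seminorm and a nonnegative potential weight), it defines a scalar product on $H^1(\D)$, and the continuity and uniform ellipticity of $\H$ on $H^1_0(\D)$ follow immediately from the two-sided bound.

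Finally, statement ii) is an elliptic regularity estimate. For $v \in H^1_0(\D)\cap H^2(\D)$ we have $\H v = -\Delta v - \Omega\L_z v + Vv$ as an $L^2$-function, so $-\Delta v = \H v + \Omega\L_z v - Vv$. I would bound the right-hand side in $L^2$: the term $\Omega\L_z v$ involves first derivatives multiplied by the bounded coordinates $x,y$, hence is controlled by $\|v\|_{H^1} \lesssim \|v\|_\H = \|\H v\|$-type quantities via i) and coercivity, while $Vv$ is bounded by $\|V\|_{L^\infty}\|v\|_{L^2}$. Then I would invoke the standard $H^2$-regularity estimate for the Dirichlet Laplacian on a convex polyhedral domain (available by \ref{A1}), namely $\|v\|_{H^2} \le C\|\Delta v\|_{L^2}$ for $v \in H^1_0(\D)\cap H^2(\D)$, and combine it with the preceding bounds. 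The main obstacle I anticipate is the lower bound in the norm equivalence: one must carefully track how the $\lambda$-margin in \ref{A4} converts into genuine coercivity without losing control when $V$ vanishes, since the shifted gradient alone only controls $\nabla v - \tfrac{\i}{2}bv$ rather than $\nabla v$ directly, so the Poincaré inequality on $H^1_0(\D)$ and the strict positivity from $\lambda > 0$ must be balanced against the indefinite cross terms arising when one expands back to the unshifted gradient.
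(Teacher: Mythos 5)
Your proposal follows essentially the same route as the paper: the magnetic rewriting of the form plus an integration by parts (using $\nabla\cdot b=0$) for statement i), boundedness of $V$ and $b$ for the upper bound, the $\lambda$-margin of \ref{A4} combined with the Poincar\'e inequality for the lower bound, and Dirichlet $H^2$-regularity for $-\Delta v = \H v + \Omega\L_z v - Vv$ together with the $H^1$-energy bound for statement ii). The one step you flag as an obstacle but do not carry out --- recovering $\|\nabla v\|_{L^2(\D)}$ from the shifted gradient --- is settled in the paper by a single application of Young's inequality with parameter $\delta = 1+\lambda$, giving $|\nabla v - \tfrac{\i}{2}bv|^2 \ge \tfrac{\lambda}{1+\lambda}|\nabla v|^2 - \tfrac{\lambda}{4}|b|^2|v|^2$, so that the loss is exactly absorbed by the nonnegativity of $V - \tfrac{1+\lambda}{4}|b|^2$ from \ref{A4}.
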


The proof of the lemma is done in the appendix since it is just a slight modification of a similar result given in \cite{HenningMalqvist17}.

\begin{remark}
The proof of Lemma \ref{elliptic} in the appendix shows that the ellipticity constant $C_1$ is given by $C_1^2 = \lambda (1 + C_P^2)(1 + \lambda)^{-1}$ where $\lambda>0$ is the constant appearing in \ref{A4} and $C_P > 0$ is the constant of the Poincar\'e -Friedrichs inequality $\| v \|_{L^2(\D)} \le C_P \| \nabla v \|_{L^2(\D)}$ for all $v \in H^1_0(\D)$. In particular, it holds $C_1 = \mathcal{O}(\lambda^{1/2})$ and hence the operator $\H$ degenerates in the case $\lambda = 0$, which is consistent with the aforementioned physical interpretation of dominating centrifugal forces.
\end{remark}

\sect{Numerical discretization and main result} \label{sectionnumericalmethod}

In this section we formulate the numerical scheme to approximate the solution of \eqref{GPE}. We start with the space discretization. \\

\textit{Space discretization.} Let $S_h \subset H^1_0(\D)$ be a finite dimensional subspace parametrized by a mesh size parameter $h > 0$. Our further assumptions on the space discretization are made implicitly in terms of properties of the Ritz-projection $P_h: H^1_0(\D) \rightarrow S_h$ with respect to the scalar product $(\cdot,\cdot)_{\H}$. To be precise, for $v \in H^1_0(\D)$ the image $P_hv \in S_h $ fulfills
\begin{align*}
	(v - P_h v,w)_{\H} = 0 \quad \forall w \in S_h. 	
\end{align*}
We pose the following assumptions on the space discretization that are given by the approximation properties of $P_h$ and the existence of an inverse estimate on $S_h$:
\begin{enumerate}[label={(A\arabic*)}]
\setcounter{enumi}{4}
\item \label{A5} There are $r \in \N$ and $C,h_0 > 0$ such that for $k = 0,1$ and for all $0 < h \le h_0$ we have the error estimate
\begin{align*}
	\| v - P_h v \|_{H^k(\D)} \le Ch^{s-k} |v|_{H^s(\D)} \quad \forall v \in H^s(\D) \cap H^1_0(\D) \quad 2 \le s \le r+1. 
\end{align*}
\item \label{A6} There are $C,h_0 > 0$ such that for all $0 < h \le h_0$ we have the inverse estimate
\begin{align*}
	\| v_h \|_{L^\infty(\D)} \le C h^{-\frac{d}{2}} \|v_h \|_{L^2(\D)} \quad \forall v_h \in S_h.
\end{align*}
\item \label{A7} There are $C_\infty,h_0 > 0$ such that for all $0 < h \le h_0$ we have the error estimate
\begin{align*}
	\| v - P_h v \|_{L^\infty(\D)} \le C_\infty |v|_{H^2(\D)} \quad \forall v \in H^2(\D) \cap H^1_0(\D).
\end{align*}
\end{enumerate}
An admissible choice for the space discretization satisfying our assumptions is the space of standard $H^1$-conforming $\mathcal{P}^{r}$-Lagrange finite elements on a quasi-uniform mesh. In that case, \ref{A5} is a standard error bound and \ref{A6} is a standard inverse estimate; see \cite{BrennerScott08}. To obtain \ref{A7}, one may split the error into
\begin{align*}
	\| v - P_h v \|_{L^\infty(\D)} \le \| v - \mathcal{I}_h v \|_{L^\infty(\D)} + \| \mathcal{I}_h v - P_h v \|_{L^\infty(\D)}
\end{align*}
where $\mathcal{I}_h: C(\overline{\D}) \rightarrow S_h$ is the conventional Lagrange (nodal) interpolation operator. The first term is of order $\mathcal{O}(h^{2 - \frac{d}{2}})$ if $u \in  H^1_0(\D) \cap H^2(\D)$ whereas for the second term we can use the inverse estimate $\| \mathcal{I}_h v - P_h v \|_{L^\infty(\D)} \le Ch^{1 - \frac{d}{2}}\| \mathcal{I}_h v - P_h v \|_{H^1(\D)} \le Ch^{1 - \frac{d}{2}}(\| \mathcal{I}_h v - v \|_{H^1(\D)} + \|v - P_h v \|_{H^1(\D)})$. Now we can use again estimates for the nodal interpolation operator and \ref{A5} to see that the resulting term is of order $\mathcal{O}(h^{2 - \frac{d}{2}})$ if $u \in H^1_0(\D) \cap H^2(\D)$. Hence \ref{A7} is fulfilled. \\

We continue with the time discretization. \\

\textit{Time discretization.} For the time discretization we choose a quasi-uniform partition $0 = t_0 < t_1 < \cdots < t_N = T$ of $\overline{I} = [0,T]$ into $N$ subintervals and for the $n$'th time step we define
\begin{align*}
	I_n = (t_{n},t_{n+1}], \quad \tau_n = t_{n+1} - t_{n}, \quad n = 0,\dots,N-1.
\end{align*}
The discretization parameter $\tau$ is defined as the maximum step size, that is
\begin{align*}
	\tau = \max_{n = 0,\dots,N-1} \tau_n.
\end{align*}
The quasi-uniformity of the partition means that there is a $\tau$-independent constant $\rho_0 > 0$ such that $\rho_0 \tau \le \min \{ \tau_n : \, n = 0,\dots,N-1\}$ uniformly 
for all admissible partitions of $\bar{I}$.
The time discretization we use is based on a continuous Galerkin ansatz, where the resulting system is resolved with sufficiently high order Gauss quadrature rules which can be used in a practical implementation of the numerical scheme. In order to formulate the numerical scheme, we start by introducing interpolation spaces in time with pointwise values in the finite element space $S_h$: For a polynomial degree $q \in \N$ and time indices $n = 0, \dots N-1$ we define
\begin{align*}
	& V_q := \big\{ v : \D \times (0,T] \rightarrow \C\, : \, v_{| \D \times I_n} (\x,t) = \sum_{j = 0}^q t^j \, \varphi_{n j} (\x),\, \mbox{where }\varphi_{nj} \in S_h \big\}, \\
	& V^n_{q} := \{ v_{| \D \times I_n}: v \in V_q\}.
\end{align*}
Now we test \eqref{weakGPE} pointwise with $v = v(t)$, $v \in V_q$ and integrate in time over $I_n$. Then we obtain the following fully-discrete numerical scheme:

\begin{definition}[Fully-discrete cG($q$)-scheme for GPE]
Assume \ref{A1}-\ref{A7} and let $u_0 \in H^1_0(\D)$ be given. Then the fully-discrete approximation is given by a function $u_{h,\tau} \in V_q$ such that for all $n = 0,\dots, N-1$
\begin{align} \label{cGscheme}
\begin{split}
	\int_{I_n} (\i \partial_t u_{h,\tau}, v) - (u_{h,\tau}, v)_{\H} - \beta (|u_{h,\tau}|^2 u_{h,\tau},v) \d t & = 0 \quad \forall v \in V_{q-1}, \\
	u_{h,\tau}^{n+} & = u_{h,\tau}(t_{n}),
\end{split}
\end{align}
where $u_{h,\tau}^{n+} = \lim_{t \searrow t_n} u_{h,\tau}(t)$ and $u_{h,\tau}^{0+} = P_h u_0$.
\end{definition}

Note that the second equation in \eqref{cGscheme}, i.e., the jump condition, implies continuity of $u_{h,\tau}$ in $t$ over the whole interval $(0,T]$. Therefore we can extend $u_{h,\tau} \in V_q$ to a continuous function on $\overline{I} = [0,T]$ by setting $u_{h,\tau}(t_0) = P_h u_0$. On the other hand, the global continuity condition already determines one degree of freedom of $u_{h,\tau}$ on each time interval $I_n$. Therefore, the space of test functions has one degree less than the space of ansatz functions. This type of methods is also known as Petrov-Galerkin methods. As we will prove now this is sufficient to preserve the energy of the fully-discrete approximation.

\begin{proposition}[Discrete energy conservation] \label{energyconservation}
Let the assumptions \ref{A1}-\ref{A7} be satisfied and let $u_{h,\tau} \in V_q$ be a fully-discrete approximation of $u$ satisfying \eqref{cGscheme} for some $u_0 \in H^1_0(\D)$. Then $u_{h,\tau}$ preserves the energy at the time instances $t_n$, i.e., for all $n = 1,\dots,N$ we have
\begin{align*}
	E( u_{h,\tau}(t_n)) = E( u_{h,\tau}(t_0)).
\end{align*}  
\end{proposition}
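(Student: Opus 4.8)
The plan is to rewrite the energy in a form adapted to the inner product $(\cdot,\cdot)_\H$ from Lemma \ref{elliptic} and then to test the scheme with the time derivative of the discrete solution, exploiting that the Petrov--Galerkin construction makes $\partial_t u_{h,\tau}$ an admissible test function. First I would observe that, by the definition of $\H$ together with the self-adjointness of $\L_z$ (which makes $(\L_z v, v)$ real and equal to $(v, \L_z v)$), the energy \eqref{energy} can be written as
\[
  E(v) = \tfrac{1}{2}\|v\|_\H^2 + \tfrac{\beta}{4}\|v\|_{L^4(\D)}^4 ,
\]
where $\|\cdot\|_\H$ is the Hermitian norm induced by $(\cdot,\cdot)_\H$. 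This reduces the claim to showing that the map $t \mapsto E(u_{h,\tau}(t))$ has vanishing increment on each $I_n$.

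The central step is the choice of test function. On each interval $I_n$ the restriction $u_{h,\tau}|_{I_n}$ is a polynomial of degree $q$ in $t$ with coefficients in $S_h$, so $\partial_t u_{h,\tau}|_{I_n}$ is a polynomial of degree $q-1$ in $t$ with coefficients in $S_h$, hence an element of $V_{q-1}^n$. Since the test functions in $V_{q-1}$ may be chosen independently on each subinterval, I may insert $v = \partial_t u_{h,\tau}$ into \eqref{cGscheme} and take real parts. The first term vanishes because $(\i w, w)$ is purely imaginary for every $w$. The remaining two terms I would identify as exact time derivatives: using that $(\cdot,\cdot)_\H$ is Hermitian gives $\Re(u_{h,\tau}, \partial_t u_{h,\tau})_\H = \tfrac{1}{2}\tfrac{d}{dt}\|u_{h,\tau}\|_\H^2$, while a direct pointwise computation yields $\Re(|u_{h,\tau}|^2 u_{h,\tau}, \partial_t u_{h,\tau}) = \tfrac{1}{4}\tfrac{d}{dt}\|u_{h,\tau}\|_{L^4(\D)}^4$. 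Collecting these identities produces $\int_{I_n} \tfrac{d}{dt} E(u_{h,\tau}(t)) \d t = 0$.

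Finally, by the fundamental theorem of calculus this says $E(u_{h,\tau}(t_{n+1}^-)) = E(u_{h,\tau}(t_n^+))$, and the jump condition $u_{h,\tau}^{n+} = u_{h,\tau}(t_n)$ together with the global continuity of $u_{h,\tau}$ that it enforces turns this into $E(u_{h,\tau}(t_{n+1})) = E(u_{h,\tau}(t_n))$. Iterating over $n = 0,\dots,N-1$ yields the assertion. The only genuinely delicate points are, first, the bookkeeping of real and imaginary parts --- in particular that the rotation contribution is absorbed into the Hermitian form $(\cdot,\cdot)_\H$ and therefore differentiates cleanly --- and, second, the verification that $\partial_t u_{h,\tau}$ truly lies in the test space, which is precisely the reason the scheme pairs trial degree $q$ with test degree $q-1$. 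Everything else reduces to a routine application of the fundamental theorem of calculus.
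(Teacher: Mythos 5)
Your proposal is correct and follows essentially the same route as the paper: test \eqref{cGscheme} with $v = \partial_t u_{h,\tau} \in V_{q-1}$, take real parts so that the $(\i\partial_t u_{h,\tau},\partial_t u_{h,\tau})$ term drops out, recognize $\Re\big[(u_{h,\tau},\partial_t u_{h,\tau})_\H + \beta(|u_{h,\tau}|^2u_{h,\tau},\partial_t u_{h,\tau})\big]$ as $\tfrac{\d}{\d t}E(u_{h,\tau})$, and integrate over $I_n$. Your additional remarks --- that $E(v)=\tfrac12\|v\|_\H^2+\tfrac{\beta}{4}\|v\|_{L^4(\D)}^4$ via Lemma \ref{elliptic}, that $\partial_t u_{h,\tau}$ has polynomial degree $q-1$ and hence lies in the test space, and that the jump condition supplies the continuity needed to chain the identity over the intervals --- are all details the paper leaves implicit, and they are verified correctly.
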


\begin{proof}
We test equation \eqref{cGscheme} with $\partial_t u_{h,\tau} \in V_{q-1}$ and take the real part to obtain
\begin{align*}
	& E( u_{h,\tau}(t_{n+1})) - E( u_{h,\tau}(t_n)) = \int_{I_n} \frac{\d}{\d t} E(u_{h,\tau}) \d t \\
	& = \Re \int_{I_n} (u_{h,\tau}, \partial_t u_{h,\tau} )_\H + \beta (|u_{h,\tau}|^2 u_{h,\tau} ,\partial_t u_{h,\tau}) \d t = - \Re \int_{I_n} \i |\partial_t u_{h,\tau}|^2 \d t = 0
\end{align*}
for $n = 0,\dots,N-1$. This proves the claim.
\end{proof}

Now we aim to formulate our main result of this work for which we make the assumption that there exists a smooth solution $u$ of \eqref{weakGPE} in the following sense:
\begin{enumerate}[label={(A\arabic*)}]
\setcounter{enumi}{7}
\item \label{A8} The GPE problem \eqref{weakGPE} has a solution $u$ satisfying
\begin{enumerate}[label=\textit{\roman*)}]
\item $u \in W^{q+2,\infty} (I,H^2(\D) \cap H^1_0(\D)) \cap W^{q+1,\infty}(I,W^{1,\infty}(\D))$,
\item $\H u \in W^{q+2,\infty} (I,H^1_0(\D))$.
\end{enumerate}
where  $q \in \N$ is the polynomial degree used for the time discretization.
\end{enumerate}

Observe that assumption \ref{A8} requires regularity of the initial value which is at least $u_0 \in H^2(\D) \cap H^1_0(\D)$. Furthermore, analogously to \cite[Lemma 3.1]{HeP17} it can be proved that any solution $u$ of \eqref{weakGPE} that fulfills the regularity assumptions \ref{A8} must be unique. \\

We continue with the main result of this work.
\begin{theorem} \label{mainresult}
Assume that the assumptions \ref{A1}-\ref{A8} are fulfilled. Then there are constants $M = M(u) > 0$ and $\tau_0,h_0 > 0$ such that for all $0 < \tau < \tau_0$ and $0 < h < h_0$ there exists a numerical approximation $u_{h,\tau}$ satisfying \eqref{cGscheme} and that fulfills the uniform bound
\begin{align*}
	\| u_{h,\tau} \|_{L^\infty(I \times \D)} < M.
\end{align*}
In particular, $M$ depending on $u$ can be chosen as
\begin{align} \label{M}
	M = \| u \|_{L^\infty(I \times \D)} + C_\infty \| u \|_{L^\infty(I,H^2(\D))} + 1
\end{align}
where $C_\infty$ is from \ref{A7}.
\end{theorem}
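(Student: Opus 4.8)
The plan is to combine a truncation of the nonlinearity with the error-splitting technique of Li and Sun \cite{LiSu13}, so that inverse estimates are applied \emph{only} to a purely spatial error term. First I would replace the cubic term $\beta|u_{h,\tau}|^2 u_{h,\tau}$ in \eqref{cGscheme} by a truncated nonlinearity $g_M$ which coincides with $\beta|z|^2 z$ for $|z|\le M$ (with $M$ as in \eqref{M}) but is globally Lipschitz on $\C$. For this modified scheme the nonlinearity is bounded and Lipschitz, so, \emph{independently of any relation between $h$ and $\tau$}, a fully-discrete truncated solution $u_{h,\tau}^{M}\in V_q$ exists for all sufficiently small $\tau$ by a fixed-point argument on each time slab $I_n$. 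The goal then reduces to proving the strict bound $\|u_{h,\tau}^{M}\|_{L^\infty(I\times\D)} < M$: once this holds the truncation is inactive, so $u_{h,\tau}^{M}$ automatically solves the original scheme \eqref{cGscheme}, giving both existence and the claimed bound.

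Next I would introduce the semi-discrete-in-time truncated auxiliary problem, i.e. the same cG($q$) time stepping with the finite element space $S_h$ replaced by the full space $H^1_0(\D)$, and denote its solution by $U_\tau$. Since $U_\tau$ is continuous in space, its distance to the exact solution $u$ is a pure temporal consistency error: using the regularity \ref{A8}, the elliptic regularity bound in Lemma \ref{elliptic} (ii), and stability of the time stepping, I expect $\|U_\tau-u\|_{L^\infty(I,H^2(\D))}\le C\tau^{q+1}$ with $C$ independent of $\tau$. As $H^2(\D)\hookrightarrow L^\infty(\D)$ for $d\le 3$, this forces $\|U_\tau-u\|_{L^\infty(I\times\D)}\to 0$, so for $\tau$ small the truncation is inactive for $U_\tau$ as well and $U_\tau$ genuinely solves the semi-discrete problem with the cubic nonlinearity. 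Crucially, no inverse inequality enters here, hence no coupling condition between $h$ and $\tau$ is produced.

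The heart of the argument is the error splitting
\[
  u_{h,\tau}^{M}-u = \big(u_{h,\tau}^{M}-P_hU_\tau\big) + \big(P_hU_\tau-U_\tau\big) + \big(U_\tau-u\big).
\]
The last term is the temporal error estimated above. The middle term is a pure Ritz-projection error, and here \ref{A7} is decisive: it gives $\|U_\tau-P_hU_\tau\|_{L^\infty(\D)}\le C_\infty|U_\tau|_{H^2(\D)}\le C_\infty\|u\|_{L^\infty(I,H^2(\D))}+o(1)$, an $\mathcal O(1)$ (non-vanishing) contribution, which is exactly why the term $C_\infty\|u\|_{L^\infty(I,H^2(\D))}$ appears in \eqref{M}. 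The first term is the only one on which I would use the inverse estimate \ref{A6}: I would set up the error equation for $e_h:=u_{h,\tau}^{M}-P_hU_\tau$ between the fully-discrete and the projected semi-discrete truncated schemes, exploit Galerkin orthogonality of $P_h$ with respect to $(\cdot,\cdot)_\H$ to eliminate the linear part, and run an energy/Gronwall argument in $L^\infty(I,L^2(\D))$. Since $U_\tau$ only lives in $H^2$, the forcing is driven by the projection errors $(I-P_h)U_\tau$, which by \ref{A5} are $\mathcal O(h^2)$ in $L^2(\D)$ and $\mathcal O(h)$ in $H^1(\D)$; the globally Lipschitz truncated nonlinearity lets me close the estimate \emph{unconditionally}, yielding $\|e_h\|_{L^\infty(I,L^2(\D))}\le C h^2$ with $C$ depending on $M$ and $u$ but not on $h,\tau$. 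Applying \ref{A6} then gives $\|e_h\|_{L^\infty(I\times\D)}\le C h^{-d/2}\|e_h\|_{L^\infty(I,L^2(\D))}\le C h^{2-d/2}\to 0$, because $2-d/2>0$ for $d\le 3$.

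Combining the three contributions through the triangle inequality and choosing $\tau_0,h_0$ small enough that the two vanishing terms are each below $\tfrac13$, I obtain $\|u_{h,\tau}^{M}\|_{L^\infty(I\times\D)}<\|u\|_{L^\infty(I\times\D)}+C_\infty\|u\|_{L^\infty(I,H^2(\D))}+1=M$, which closes the truncation argument. The step I expect to be the main obstacle is the unconditional bound $\|e_h\|_{L^\infty(I,L^2(\D))}\le Ch^2$: one must set up the fully-discrete error equation carefully, cope with the Petrov--Galerkin mismatch between the ansatz space $V_q$ and the test space $V_{q-1}$, and establish $\tau$-uniform stability of the resulting time-discrete scheme while controlling the truncated nonlinearity, all \emph{without} invoking spatial regularity of $U_\tau$ beyond $H^2$. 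Everything else reduces to the temporal consistency analysis of the semi-discrete problem and the standard approximation properties \ref{A5}--\ref{A7}.
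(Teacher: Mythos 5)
Your proposal follows essentially the same route as the paper: truncate the nonlinearity with the cutoff determined by $M$ from \eqref{M}, introduce the semi-discrete-in-time truncated auxiliary problem, split the error into a temporal part, a Ritz-projection part and a fully-discrete part, apply the inverse estimate \ref{A6} only to the last (purely spatial) contribution, and conclude that the truncation is never active. The only small difference is that you anticipate an optimal $\mathcal{O}(\tau^{q+1})$ rate for the semi-discrete $L^\infty(I,H^2(\D))$-error, whereas the paper establishes (and only needs) the suboptimal rate $\mathcal{O}(\tau^{q})$ there; this does not affect the validity of your argument.
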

The proof of Theorem \ref{mainresult} is given in section \ref{section:proof-main-result}.\\[0.5em]
Form the above stated boundedness of the numerical solution we can conclude a priori error estimates for the approximation $u_{h,\tau}$ which generalize the results of \cite{Makridakis99}. To be precise, the a priori estimates shown in \cite{Makridakis99} only hold for the case $d=2$, $V = 0$, $\Omega = 0$ and under coupling conditions for the spatial and temporal mesh, i.e., that the time step size $\tau$ and the spatial parameter $h$ are such that $\log(h)\tau^{q-1} \rightarrow 0$ as $\tau,h \rightarrow 0$. The mesh condition in \cite{Makridakis99} is needed to show uniform boundedness of the approximations $u_{h,\tau}$ through an inverse estimate of the form $\| v \|_{L^\infty(\D)} \le |\log(h)| \| \nabla v \|_{L^2(\D)}$ for $v \in S_h$ and $h < 1$ (cf. \cite[Lemma 6.4]{Thomee97}) which is only valid in two dimensions. In three dimension one may use the inverse estimate $\| v \|_{L^\infty(\D)} \le Ch^{-1/2} \| \nabla v\|_{L^2(\D)}$ (cf. \cite[Lemma 1.142]{Ern04}) and the coupling condition would become $h^{-1/2} \tau^{q-1} \rightarrow 0$ as $h,\tau \rightarrow 0$. The boundedness of the approximation is essential to conclude the final error estimates. Thanks to our result (Theorem \ref{mainresult}) this mesh condition as well as the restriction to $d=2$ can be avoided to show uniform $L^{\infty}$-bounds and hence the desired error estimates. The generalization of the $L^\infty(L^2)$- and $L^\infty(H^1)$-error estimates to the case $V \neq 0$, $\Omega \neq 0$ is then straightforward.

\begin{corollary} \label{aprioriestimates}
Let the assumptions of Theorem \ref{mainresult} be fulfilled. If $u,\partial_t u \in L^\infty(I,H^{r+1}(\D))$ then there are constants $C = C(u) > 0$ and $\tau_0,h_0 > 0$ such that for all $0 < \tau < \tau_0$ and $0 < h < h_0$ the following a priori error estimates hold:
\begin{align*}
	& \| u - u_{h,\tau} \|_{L^\infty(I,L^2(\D))} \le C(\tau^{q+1} + h^{r+1}), \\
	& \| u - u_{h,\tau} \|_{L^\infty(I,H^1(\D))} \le C(\tau^{q+1} + h^{r}).
\end{align*}
Moreover, if $u \in C^{4q}(\overline{I} \times \overline{\D})$ and $V = 0$, $\Omega = 0$ then we have superconvergence at any $t_n$ with
\begin{align*}
	\max_{0 \le n \le N} \| u(t_n) - u_{h,\tau}(t_n) \|_{L^2(\D)} \le C(\tau^{2q} + h^{r+1}).
\end{align*}

\end{corollary}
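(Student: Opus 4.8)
The plan is to turn the uniform $L^\infty$-bound of Theorem~\ref{mainresult} into a Lipschitz control of the cubic nonlinearity and then run a conventional Galerkin error analysis. To begin, I would note that by \ref{A8} and the explicit choice \eqref{M} the exact solution satisfies $\|u\|_{L^\infty(I\times\D)} < M$, while Theorem~\ref{mainresult} gives the same bound for $u_{h,\tau}$. Consequently both $u$ and $u_{h,\tau}$ take values in the ball of radius $M$, on which $w \mapsto |w|^2 w$ is Lipschitz: $\| |v|^2 v - |w|^2 w \|_{L^2(\D)} \le 3M^2 \| v - w \|_{L^2(\D)}$ whenever $\|v\|_{L^\infty(\D)},\|w\|_{L^\infty(\D)} \le M$. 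This reduction is what replaces the coupling condition of \cite{Makridakis99}: because the bound of Theorem~\ref{mainresult} holds unconditionally, the Lipschitz constant $3M^2 = 3M(u)^2$ is independent of $h$ and $\tau$, and no CFL restriction enters the subsequent Gronwall argument.

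Next I would establish Galerkin orthogonality. Testing \eqref{weakGPE} with $v \in V_{q-1}$, integrating over $I_n$, and using $(u,v)_\H = \langle \H u, v\rangle$ shows that the exact solution fulfils the same relation \eqref{cGscheme} as $u_{h,\tau}$; subtracting, the error $e := u - u_{h,\tau}$ obeys
\[ \int_{I_n} (\i \partial_t e, v) - (e,v)_\H - \beta\big( |u|^2 u - |u_{h,\tau}|^2 u_{h,\tau}, v \big) \d t = 0 \qquad \forall\, v \in V_{q-1}. \]
I would then split $e = \rho + \theta$ with $\rho := u - \Pi u$ and $\theta := \Pi u - u_{h,\tau} \in V_q$, where the space-time projection $\Pi u \in V_q$ combines the spatial Ritz projection $P_h$ with respect to $(\cdot,\cdot)_\H$ --- which makes $(\rho,\cdot)_\H$ vanish against $S_h$-valued functions and supplies the spatial rates $h^{r+1}$ and $h^{r}$ from \ref{A5} --- with the continuous-Galerkin time projection onto $\mathbb{P}_q(I_n)$ fixed by nodal matching and $L^2(I_n)$-orthogonality to $\mathbb{P}_{q-1}$, which supplies the temporal rate $\tau^{q+1}$ from the regularity in \ref{A8}.

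For the two basic bounds I would exploit that the test space is $V_{q-1}$ and that $\partial_t \theta \in V_{q-1}$. Testing the error relation with $\partial_t \theta$ and taking real parts produces a telescoping term $\tfrac12 \tfrac{\d}{\d t}\|\theta\|_\H^2$; estimating the nonlinear contribution by the Lipschitz bound together with $\|e\|_{L^2} \le \|\rho\|_{L^2} + \|\theta\|_{L^2}$ and bounding the projection remainders by \ref{A5} and the temporal estimates leads, after a discrete Gronwall lemma, to $\max_n \|\theta(t_n)\|_\H \lesssim \tau^{q+1} + h^{r}$; with the norm equivalence of Lemma~\ref{elliptic} and the triangle inequality this is the $L^\infty(I,H^1)$-estimate. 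The sharper $L^\infty(I,L^2)$-estimate, whose spatial order is $h^{r+1}$, I would obtain by a duality argument --- testing against the fully discrete solution of a suitable backward problem --- so that the $L^2$-part of $\rho$ is paired with a projection error rather than differentiated, since the energy test alone only yields the $H^1$ spatial order $h^{r}$ for $\theta$.

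Finally, for the superconvergence I would restrict to $V=0$, $\Omega=0$, so that $\H=-\Delta$ is self-adjoint and the cG($q$) time stepping regains its classical nodal superconvergence of order $2q$; here the additional regularity $u\in C^{4q}(\overline{I}\times\overline{\D})$ feeds the duality argument. The mechanism is the defining $L^2(I_n)$-orthogonality of the temporal projection to $\mathbb{P}_{q-1}$: pairing $\rho$ against the discrete dual solution and integrating against polynomials of degree $q-1$ doubles the temporal order at the nodes $t_n$, turning $\tau^{q+1}$ into $\tau^{2q}$. This is precisely the argument of \cite{Makridakis99}, now legitimate in $3D$ because the required $L^\infty$-control of $u_{h,\tau}$ comes from Theorem~\ref{mainresult} instead of an inverse estimate under a mesh condition. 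The main obstacle is designing the space-time projection $\Pi$ so that it simultaneously lies in $V_q$, inherits the $(\cdot,\cdot)_\H$-orthogonality that eliminates the elliptic cross term, and carries the temporal $L^2$-orthogonality needed for the doubling; keeping this doubled order through the duality step while the Lipschitz constant enters the Gronwall factor is the most delicate point, whereas everywhere else the $L^\infty$-bound renders the analysis essentially that of the linear problem.
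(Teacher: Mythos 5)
Your overall architecture is the right one and matches the paper's logic: Theorem~\ref{mainresult} makes the cubic term Lipschitz with a constant depending only on $M(u)$, Galerkin orthogonality holds because the exact solution satisfies \eqref{cGscheme} when tested against $V_{q-1}$, and the error is split through a space--time comparison function built on the Ritz projection $P_h$ before a Gronwall argument. (The paper runs the argument for the truncated scheme and identifies $\ut_{h,\tau}=u_{h,\tau}$ at the end, while you invoke the $L^\infty$-bound directly; that difference is cosmetic. The paper's temporal comparison is the Gauss--Lobatto interpolant $\I_\tau^{\text{\tiny Lo}}P_h u$ rather than your nodally-matched, $L^2(I_n)$-orthogonal cG projection; note that requiring matching at \emph{both} endpoints plus orthogonality to $\mathbb{P}_{q-1}$ over-determines a degree-$q$ polynomial, so you would have to drop one condition.)

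Two steps deviate in ways that matter. First, the duality argument for the $L^\infty(L^2)$-estimate is a detour resting on a false premise: it is not true that ``the energy test alone only yields the $H^1$ spatial order $h^r$.'' The paper tests the error equation with $e_h^{n,i}$ itself at the Gauss--Legendre nodes; the term $-\i\tau_n w_i^{\text{\tiny GL}}(e_h^{n,i},e_h^{n,i})_\H$ is purely imaginary and disappears upon taking real parts, the $\H$-cross term with the Ritz defect vanishes by construction, and the remaining consistency terms (the quantities $A^{n,i},B^{n,i},C^{n,i}$ of Lemma~\ref{ABCest}) are paired in $L^2$ and carry $h^{r+1}$ directly from \ref{A5}. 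No dual problem is needed. Second, testing with $\partial_t\theta$ for the $H^1$-estimate is risky: the nonlinear residual is then paired with $\partial_t\theta$, which is not controlled by any term on the left (its contribution to the real part of the leading term vanishes), so you must absorb $\|\partial_t\theta\|_{L^2}$ via the temporal inverse estimate at a cost of $\tau_n^{-1}$ --- exactly the mechanism by which the paper's Lemma~\ref{consistencyH2} loses one power of $\tau$ in the $H^2$-estimate. As written, your $H^1$ argument would deliver at best $\tau^{q}$ rather than $\tau^{q+1}$. The paper avoids this by testing with $\H e^{n,i}$ at the Gauss--Legendre nodes and using the identity \eqref{interpolation-projection} (Gauss--Legendre interpolation equals the $L^2(I_n)$-projection onto $\mathbb{P}_{q-1}$) to produce the telescoping $\tfrac12\|e^{n+1}\|_\H^2-\tfrac12\|e^n\|_\H^2$, so that the nonlinearity is paired with $e$ itself in the $\H$-inner product and handled by the $H^1$-Lipschitz property of the truncated nonlinearity. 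Your description of the superconvergence step is consistent with the reference, though you should also record that the restriction $V=0$, $\Omega=0$ enters through the boundary compatibility $\H^s u=0$ on $\partial\D$, which is the actual obstruction to generalizing it.
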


The proof of Corollary \ref{aprioriestimates} follows analogously along the lines of \cite{Makridakis99}. Exemplarily, the proof of the $L^\infty(L^2)$-error estimates is given in the appendix. We point out, that superconvergence of order $\mathcal{O}(\tau^{2q})$ at the discrete points $t_n$ requires $V = 0$ and $\Omega = 0$. The generalization to the rotating GPE with $V \neq 0$ and $\Omega \neq 0$ is much more delicate. This is due to the fact, that the crucial step in proving the superconvergence is to show that $\H^s u = 0$ on $\partial \D$ for $s = 0,\dots,2q$ assuming $u \in C^{4q}(I \times \overline{\D})$. In \cite{Makridakis99} and before in \cite{Karakashian93} this was done for the case $\H = -\Delta$ by explicitly analyzing the second partial derivatives at the boundary. However, this procedure cannot be trivially extended to the general case $\H = -\Delta - \Omega \L_z + V$ and it is unclear if the strategy works out in that case. However, in applications one usually observes the superconvergence even in the presence of an angular momentum and a trapping potential. This is due to the fact that physically relevant solutions decay exponentially fast near the boundary. The same holds true for the derivatives of the solution so that $\H^s u$ is approximately zero on $\partial \D$. Therefore, possible low regularity close to the boundary produces only errors that are negligible compared to the (interior) errors caused by the time and space discretization.

\sect{Reformulation and implementation of the method}
\label{section:reformulation}

Before we turn our attention to the proof of our main result, we need to introduce a suitable reformulation of the  fully-discrete scheme \eqref{cGscheme}. The reformulated system gives not only access to a simple implementation, but it will be also an important ingredient of our error analysis. In particular, we explain in the following how the time integrals in \eqref{cGscheme} can be solved exactly using sufficiently high quadrature rules since the solution $u_{h,\tau}$ as well as the test function $v$ are polynomials in time on each $I_n$. For that purpose, we consider the $q$-stage Gauss-Legendre quadrature rule on the unit interval $[0,1]$, that is
\begin{align} \label{GLquadrature}
	\int_0^1 p(\tau) \d\tau = \sum_{j=1}^q w^\GL_{j} \, p(s^\GL_j) \quad \forall p \in \mathcal{P}^{2q-1}([0,1])
\end{align}
with the Gaussian quadrature nodes $0 < s^\GL_1 < \cdots < s^\GL_q < 1$ and weights $\{ w^\GL_j\}_{i = 1,\dots ,q}$. This Gauss-Legendre quadrature rule is exact for polynomials of degree $2q-1$ which is sufficient to integrate the linear terms in \eqref{cGscheme} exactly. In addition, we introduce the Lagrange polynomials of degree $q-1$ associated with the Gaussian quadrature nodes $\{ s^\GL_j\}_{j = 1,\dots,q}$ by
\begin{align*}
	\ell_i(s) = \prod_{j = 1, j \neq i}^q \frac{(s - s^\GL_j)}{(s^\GL_i - s^\GL_j)}, \quad i = 1,\dots,q.
\end{align*}
The polynomials $\ell_i$ will be later used to express the space of test functions. To express the space of ansatz functions (which has one degree more), we add $s^\GL_0 = 0$ to the collection of the Gaussian quadrature nodes such that $0 =s^\GL_0 < s^\GL_1 < \dots < s^\GL_q$, and denote the corresponding Lagrange polynomials of degree $q$ by
\begin{align*}
	\hat{\ell}_i(s) = \prod_{j = 0,  j \neq i}^q \frac{(s - s^\GL_j)}{(s^\GL_i - s^\GL_j)}, \quad i = 0,\dots,q.
\end{align*}
The unit interval $[0,1]$ is transformed to $\overline{I}_n$ by the transformation $s \mapsto t_n + s\, \tau_n$. Hence, we obtain the Gaussian quadrature nodes and weights on $\overline{I_n}$ via
\begin{align*}
	t^\GL_{n,j} := t_n + s^\GL_j \tau_n, \quad w^\GL_{n,j} := \tau_n \, w^\GL_j, \quad j = 1,\dots, q
\end{align*}
so that $\int\limits_{I_n} p(s) \d s = \sum\limits_{j = 1}^q w^\GL_{n,j} \, p(t^\GL_{n,j})$ for all polynomials $p$ on $I_n$ of degree less or equal $2q - 1$. Furthermore, we formally add the boundary points of $\overline{I}_n$ to the collection of the Gaussian quadrature nodes and denote them by
\begin{align*}
	t^\GL_{n,0} := t_n, \quad t^\GL_{n,q+1} := t_{n+1}.
\end{align*}
Next we transform the Lagrange polynomials $\ell_{i}$ of degree $q-1$ and $\hat{\ell}_{i}$ of degree $q$ to the interval $\overline{I}_n$ as well and define
\begin{align*}
& \ell_{n,i}(t_n + s \tau_n) := \ell_{i}(s), \quad i = 1,\dots,q,\\
& \hat{\ell}_{n,i}(t_n + s \tau_n) := \hat{\ell}_{i}(s), \quad i = 0,\dots,q.
\end{align*}
Since $u_{h,\tau} \in V_q$ we can express it uniquely on $I_n$ as
\begin{align} \label{ansatz}
	u_{h,\tau}(t) = \sum_{j = 0}^q \hat{\ell}_{n,j}(t) \, u^{n,j}_{h,\tau}, \quad \mbox{for } t \in I_n,
\end{align}
where $u^{n,j}_{h,\tau}  := u_{h,\tau}(t^\GL_{n,j}) \in S_h$. Testing in \eqref{cGscheme} with  $v(t) = \ell_{n,i}(t)\,\psi$ for $i = 1, \dots, q$ and arbitrary $\psi \in S_h$ yields that \eqref{cGscheme} is equivalent to
\begin{align} \label{cGscheme2}
\begin{split}
	 \sum_{j = 0}^q m_{ij} \, (u^{n,j}_{h,\tau}, \psi)  - \i \, \tau_n \, w^\GL_i \, (u^{n,i}_{h,\tau}, \psi )_{\H} & - \i \beta \int_{I_n} \ell_{n,i}(t) \, (|u_{h,\tau}|^2u_{h,\tau},\psi) \d t = 0
\end{split}
\end{align}
for all $\psi \in S_h$, all $i = 1,\dots,q$ and $n = 0,\dots,N-1$. Here the coefficients $m_{ij}$ are given by
\begin{align} \label{mcoeff}
	m_{ij} = \int_0^1 \hat{\ell}'_{j}(s) \, \ell_{i}(s) \d s \quad \mbox{for } 1\le i \le q,\, 0 \le j \le q.
\end{align}
Note that $u_{h,\tau}^{n,0} = u_{h,\tau}(t_n) = u_{h,\tau}^{n+}$ is given by the previous time step via
\begin{align*}
	u_{h,\tau}^{n,0} = \sum_{j = 0}^q \hat{\ell}_{j}(1) \, u^{n-1,j}_{h,\tau}, \quad n = 1,\dots,N-1, \quad \mbox{and} \quad u_{h,\tau}^{0,0} = P_h u_0.
\end{align*}
The remaining integral of the nonlinear term in \eqref{cGscheme2} is of degree $4q-1$ in time and therefore the $q$-stage Gauss-Legendre quadrature is not exact for this term. However, we can integrate it using a $2q$-stage Gauss-Legendre quadrature rule. For that purpose, let $\tilde{s}^\GL_j$, $j = 1, \dots , 2q$ be the Gaussian quadrature nodes on $[0,1]$ and $\tilde{w}^\GL_j$, $j = 1,\dots,2q$ the weights of the $2q$-stage Gauss-Legendre quadrature rule. Applying the quadrature rule to the integral of the nonlinear term in \eqref{cGscheme2}, and writing compactly $g(u) = |u|^2u$, then leads to
\begin{align} \label{timestepping}
\begin{split}
	\sum_{j = 0}^q m_{ij} \, (u^{n,j}_{h,\tau}, \psi) - \i \tau_n w^\GL_i  \, (u^{n,i}_{h,\tau}, \psi )_\H - \i \beta \tau_n \sum_{\nu = 1}^{2q} \tilde{w}^{\GL}_\nu \, \ell_{i}(\tilde{s}^\GL_\nu) \Big( g \Big( \sum_{j = 0}^q \hat{\ell}_j(\tilde{s}^\GL_\nu) u^{n,j}_{h,\tau} \Big), \psi \Big) = 0
\end{split}
\end{align}
for all $\psi \in S_h$, $i = 1,\dots,q$ and $n = 0,\dots,N-1$. Recall at this point that we use the inner product $(u,v) = \int_\D \overline{u} \,v \d\x$ which is conjugate-linear in the first argument. \\

Next, we describe a way to realize the time stepping procedure $u_{h,\tau}(t_n) \rightarrow u_{h,\tau}(t_{n+1})$ in view of implementing the cG($q$)-method. So we assume that $u_{h,\tau}(t_n) = u^{n,0}_{h,\tau} \in S_h$ is given from the previous time step. What we need to compute are $u^{n,j}_{h,\tau} \in S_h$, $j = 1,\dots,q$ satisfying \eqref{timestepping} so that $u_{h,\tau}(t_{n+1}) \in S_h$ is obtained from
\begin{align*}
	u_{h,\tau}(t_{n+1}) = \sum_{j = 0}^q \hat{\ell}_{j}(1) u^{n,j}_{h,\tau}.
\end{align*}
The system \eqref{timestepping} is a coupled system for the unknowns $u^{n,j}_{h,\tau}$. So in order to decrease the computational effort, one can decouple the system as it was already proposed in \cite{Makridakis99}. For that purpose, we set $\mathcal{M} = (m_{ij})_{i,j = 1,\dots q}$ with $m_{ij}$ from \eqref{mcoeff} and further set $W := \diag (w_1^\GL,\dots,w_q^\GL)$. Then we note that $A = \mathcal{M}^{-1}W$ is the coefficient matrix of the $q$-stage Gauss-Legendre Implicit Runge--Kutta method and is therefore diagonalizable (cf. \cite{Dekker84} and \cite{Makridakis99}). Hence there is $\Sigma \in \R^{q \times q}$ such that $\Sigma A \Sigma^{-1} = \Gamma = \diag (\gamma_1,\dots,\gamma_q)$. Now we introduce
\begin{align*}
	U^{n,i} = \sum_{j = 1}^q \Sigma_{ij} \, u^{n,j}_{h,\tau}, \quad i = 1,\dots,q.
\end{align*}
Then $U^{n,j}$, $j = 1,\dots,q$ solves
\begin{align} \label{timesteppingtransform}
	(U^{n,i}, \psi) - \i \, \tau_n \gamma_i (U^{n,i}, \psi)_\H = a_i(u^{n,0}_{h,\tau}, \psi) + \i \, \beta \tau_n \sum_{\nu = 1}^q b_{i\nu} \Big( g \big( c_{0\nu} u^{n,0}_{h,\tau} + \sum_{j = 1}^q c_{j\nu} U^{n,j} \big), \psi \Big)
\end{align}
for all $i = 1,\dots,q$ and $\psi \in S_h$. Here the coefficients $a_i, b_{i\nu}, c_{0\nu}, c_{i\nu}$ for $i = 1,\dots,q$ and $\nu = 1,\dots, 2q$ are given by
\begin{align*}
	a_i = \sum_{j = 1}^q \Sigma_{ij}, \quad b_{i\nu} = \tilde{w}_\nu^\GL \sum_{j = 1}^q (\Sigma \mathcal{M})^{-1}_{ij} \ell_j(\tilde{s}^\GL_\nu), \quad c_{0\nu} = \hat{\ell}_0(\tilde{s}_\nu^\GL), \quad c_{i\nu} = \sum_{j = 1}^q \hat{\ell}_j(\tilde{s}^\GL_\nu) \Sigma^{-1}_{ji}.
\end{align*}
So in each time step, one has to solve the now decoupled system \eqref{timesteppingtransform} for $U^{n,i}_{h,\tau}$, $i = 1,\dots,q$. In order to solve the nonlinear system \eqref{timesteppingtransform} we propose a fixed point iteration that we explain briefly in the following. Let us define $\H_h: S_h \rightarrow S_h$ via $(\H_h u , v) = (u,v)_\H$. Then we can write \eqref{timesteppingtransform} as
\begin{align} \label{finaltimestepping}
	(I + \i \tau_n \gamma_i \H_h) U^{n,i} = a_i u^{n,0}_{h,\tau} - \i \beta \sum_{\nu = 1}^q b_{i\nu} g \big( c_{0\nu} u^{n,0}_{h,\tau} + \sum_{j = 1}^q c_{j\nu} U^{n,j} \big), \quad i =1,\dots,q,
\end{align}
where $I$ denotes the identity.
Now the explicit fixed point iteration reads as follows: Set $U^{n,i}_{0} = U^{n-1,i}$ for $i = 1,\dots,q$ and iterate for $k = 0,\dots$ the system of linear equations
\begin{align} \label{fixedpointiteration}
	(I + \i \tau_n \gamma_i \H_h) U^{n,i}_{k+1} = a_i u^{n,0}_{h,\tau} - \i \beta \sum_{\nu = 1}^q b_{i\nu} g \big( c_{0\nu} u^{n,0}_{h,\tau} + \sum_{j = 1}^q c_{j\nu} U^{n,j}_k \big), \quad i =1,\dots,q
\end{align}
until $\| U^{n,i}_{k+1} - U^{n,i}_{k} \| < \varepsilon_{\mathrm{FP}}$ for some tolerance $\varepsilon_{\mathrm{FP}} > 0$ and some suitable norm $\| \cdot \|$ on $S_h$. Of course the stopping criterion can also be chosen w.r.t. the residuum of \eqref{finaltimestepping}. However, we note that after formulating \eqref{finaltimestepping} in the finite element basis the resulting matrix associated with the operator $(I + \i \tau_n \gamma_i \H_h)$ does not change in the time stepping if $\tau_n=\tau$ is selected as a uniform constant. Hence, it can be $\mathbf{LU}$-decomposed in a pre-process which makes solving the linear equations in \eqref{fixedpointiteration} during the time stepping more efficient.

\sect{Numerical scheme with truncated nonlinearity}
\label{sec:truncetedproblem}

In this section we introduce a truncated scheme which is used to derive the stated a priori estimates for the numerical solution from \eqref{cGscheme}. The truncation technique is a classical approach in the context of nonlinear Schr\"odinger equations that allows to get an a priori control over the growth of the nonlinear term (cf. \cite{ADK91,HenningMalqvist17,HeP17,HeW22,Makridakis98,Makridakis99,Zou01}). 
For that, we start from the fully-discretized problem \eqref{cGscheme} and replace the nonlinear term $|u|^2u$ by a cutoff function $f(u)$ which is Lipschitz continuous w.r.t the $L^2$- and $H^1$-norm. After that, we prove the desired a priori estimates for the truncated problem. Our main result then guarantees that the solution of the truncated problem coincides with the one of the original numerical scheme \eqref{cGscheme}, which concludes the argument. In the next lemma we introduce the cutoff function as suggested in \cite{Makridakis98} and we present its properties. Slightly 
sharper versions can be found in the literature; see e.g. \cite[Proof of Lemma 5.2]{HenningMalqvist17}.

\begin{lemma}[{\cite[Lemma 4.1 and its proof]{Makridakis98}}] \label{cutoff}
Let assumptions \ref{A1}, \ref{A7} and \ref{A8} be fulfilled and $M > 0$ given by \eqref{M}. Then there is a $C^2$ function $\gamma: [0,\infty) \rightarrow [0,\infty)$ with $\gamma(0) = 0$ and a constant $C = C(M) > 0$ such that $f(z) = \gamma(|z|^2)z$ satisfies
\begin{enumerate}[label={\roman*).}]
\item[\mbox{\normalfont i)}] $f(z) = |z|^2 z$ for all $|z| < M$,
\item[\mbox{\normalfont ii)}] $|f(z)| \le C|z|$ for all $z \in \C$,
\item[\mbox{\normalfont iii)}] $|f(z) - f(w)| \le C|z - w|$ for all $z,w \in \C$
\item[\mbox{\normalfont iv)}] $\| \nabla (f(v) - f(w)) \|_{L^2(\D)} \le C \| \nabla( v - w )\|_{L^2(\D)}$ for all $\| v \|_{W^{1,\infty}(\D)} < \| u \|_{L^\infty(I,W^{1,\infty}(\D))} + 1$ and $w \in H^1(\D)$.
\end{enumerate} 
\end{lemma}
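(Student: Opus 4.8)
The plan is to build $\gamma$ as a smooth, eventually-constant modification of the identity and then to read off properties i)--iv) from uniform bounds on $\gamma$, $\gamma'$ and $\gamma''$. First I would fix a cutoff profile $\eta \in C^1([0,\infty))$ with $0 \le \eta \le 1$, $\eta \equiv 1$ on $[0,M^2]$ and $\eta \equiv 0$ on $[4M^2,\infty)$, and set $\gamma(s) := \int_0^s \eta(r)\d r$. Then $\gamma \in C^2$, $\gamma(0)=0$, $\gamma(s)=s$ for $s \le M^2$, and $\gamma$ is nondecreasing and bounded by $4M^2$, while $\gamma'=\eta$ and $\gamma''=\eta'$ are bounded and supported in $[0,4M^2]$ resp. $[M^2,4M^2]$. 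Since $z\mapsto|z|^2$ is smooth, $f(z)=\gamma(|z|^2)z$ is then $C^2$, and a short computation shows that its real Jacobian $Df$ and Hessian $D^2f$ are bounded by a constant $C=C(M)$: the quantities that appear are $\gamma(|z|^2)$, $\gamma'(|z|^2)|z|^2$, $\gamma'(|z|^2)|z|$ and $\gamma''(|z|^2)|z|^3$, each of which is bounded because $\gamma'$ and $\gamma''$ vanish for $|z|>2M$.

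\textbf{Properties i)--iii).} Property i) is immediate, since $|z|<M$ forces $|z|^2<M^2$, where $\gamma$ is the identity. Property ii) follows from $|f(z)|=\gamma(|z|^2)|z|\le \|\gamma\|_\infty|z|$. Property iii) is just global Lipschitz continuity of $f$, which I would obtain from the boundedness of $Df$ via the mean value inequality $|f(z)-f(w)| \le \|Df\|_\infty\,|z-w|$.

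\textbf{Property iv) and the main obstacle.} Here I would differentiate by the chain rule, $\nabla f(v)=Df(v)\nabla v$, and split
\[
\nabla\big(f(v)-f(w)\big)=Df(v)\,\nabla(v-w)+\big(Df(v)-Df(w)\big)\nabla w .
\]
The first term is controlled directly by $\|Df\|_\infty\,\|\nabla(v-w)\|_{L^2(\D)}$. The delicate term is the second one: since $w$ is only assumed in $H^1(\D)$, there is no $L^\infty$-control on $\nabla w$, so the pointwise Lipschitz bound $|Df(v)-Df(w)|\le C|v-w|$ coming from $f\in C^2$ cannot be paired with $\nabla w$ naively. The device I would use is to write $\nabla w=\nabla v-\nabla(v-w)$, so that
\[
\big(Df(v)-Df(w)\big)\nabla w=\big(Df(v)-Df(w)\big)\nabla v-\big(Df(v)-Df(w)\big)\nabla(v-w).
\]
The last term is again bounded by $2\|Df\|_\infty\,\|\nabla(v-w)\|_{L^2(\D)}$, while for the first I would move $\nabla v$ into $L^\infty$ using the bound $\|v\|_{W^{1,\infty}(\D)}<\|u\|_{L^\infty(I,W^{1,\infty}(\D))}+1$ and use the Lipschitz bound on $Df$ to estimate it by $\|\nabla v\|_{L^\infty(\D)}\,C\,\|v-w\|_{L^2(\D)}$; this is exactly the place where the $W^{1,\infty}$-hypothesis on $v$ is essential. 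Finally, since in every application $v-w$ has vanishing trace, a Poincaré--Friedrichs inequality converts $\|v-w\|_{L^2(\D)}$ into $\|\nabla(v-w)\|_{L^2(\D)}$, completing iv). I expect the transfer of the $\nabla w$-term onto $\nabla v$ (and the attendant use of Poincaré) to be the only nontrivial point; everything else is bookkeeping around the uniform bounds on $\gamma$, $\gamma'$ and $\gamma''$.
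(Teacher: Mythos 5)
The paper does not actually prove this lemma --- it imports both the statement and the proof from \cite[Lemma 4.1]{Makridakis98} --- so there is no in-paper argument to compare against. Your construction ($\gamma$ as the antiderivative of a $C^1$ profile equal to $1$ on $[0,M^2]$ and to $0$ beyond $4M^2$, followed by uniform bounds on $Df$ and $D^2f$ coming from the compact support of $\gamma'$ and $\gamma''$) is precisely the standard one used in that reference and in the ``sharper versions'' the authors cite, and your treatment of i)--iii) is correct. The one point worth making explicit concerns the Poincar\'e step at the end of iv): as literally stated, with $w$ ranging over all of $H^1(\D)$, property iv) is false --- take $w = v + c$ for a nonzero constant $c$ with $|v|,|w|<M$; then $\nabla(v-w)=0$ while $\nabla(f(v)-f(w)) = \big(Df(v)-Df(w)\big)\nabla v$ is generically nonzero --- so some hypothesis forcing $\|v-w\|_{L^2(\D)}\le C\|\nabla(v-w)\|_{L^2(\D)}$, e.g.\ $v-w\in H^1_0(\D)$, is unavoidable. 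You correctly identify this and note that in every application of the lemma ($v=\I^{\Lo}_\tau u$ and $w=\ut_\tau$ or $\ut_{h,\tau}$, all valued in $H^1_0(\D)$) the difference does have vanishing trace; this is a defect of the lemma's formulation rather than of your argument. Granting that reading, your decomposition of $\nabla(f(v)-f(w))$ into $Df(v)\nabla(v-w)$, $-(Df(v)-Df(w))\nabla(v-w)$ and $(Df(v)-Df(w))\nabla v$, with the last term controlled by the $W^{1,\infty}$-bound on $v$ and the global Lipschitz bound on $Df$, closes the estimate correctly.
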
 

Note that assumptions \ref{A7} and \ref{A8} in Lemma \ref{cutoff} are only needed in order to define the cutoff region determined by $M$ from \eqref{M}. Using the cutoff function $f$ we can now state the truncated problem as follows:

\begin{definition}[Truncated fully-discrete cG($q$)-scheme for GPE]
Under the assumptions \ref{A1}-\ref{A8} and for given $u_0 \in H^1_0(\D)$. The truncated fully-discrete approximation is given by a function $\ut_{h,\tau} \in V_q$ such that for all $n = 0,\dots, N-1$
\begin{align} \label{auxproblem}
\begin{split}
	\int_{I_n} (\i \partial_t \ut_{h,\tau}, v) - (\ut_{h,\tau}, v)_{\H} - \beta (f(\ut_{h,\tau}),v) \d t & = 0 \quad \forall v \in V_{q-1}, \\
	\ut_{h,\tau}^{n+} & = \ut_{h,\tau}(t_{n}),
\end{split}
\end{align}
where $\ut_{h,\tau}^{n+} = \lim_{t \searrow t_n} \ut_{h,\tau}(t)$, $\ut_{h,\tau}^{0+} = P_h u_0$ and $f$ is the cutoff function from \eqref{cutoff} with $M$ given by \eqref{M}.
\end{definition}

Using the same arguments as in the proof of Proposition \ref{energyconservation} we have that every solution $\ut_{h,\tau}$ of \eqref{auxproblem} preserves the truncated energy
\begin{align} \label{modified_energy}
	\tilde{E}(u) := \frac{1}{2} \int_{\D} |\nabla u|^2 - \Omega \, \overline{u} \, \L_z u + V |u|^2 + \beta \, \Gamma(|u|^2) \d\x, \quad \Gamma(s) := \int_0^s \gamma(t) \d t
\end{align}
at the discrete time instances $t_n$, i.e., $\tilde{E}( u_{h,\tau}(t_n)) = \tilde{E}(P_h u_0)$ for all $n = 1,\dots,N$. The existence of a solution of \eqref{auxproblem} was shown in \cite{Makridakis99} for the case $V = 0$ and $\Omega = 0$ and is similar to the case considered in \cite{Makridakis98}. The proof is based on Browder's fixed-point theorem (cf. \cite[Lemma 4]{Browder1965} or \cite[Lemma 3.1]{ADK91}) and the following stability result of the numerical time stepping scheme.

\begin{lemma}[{\cite[Lemma 2.1]{Makridakis99}}] \label{stability}
Let $\mathcal{M} = (m_{ij})_{i,j = 1,\dots,q}$ be given by \eqref{mcoeff}. Further, let $s^\GL_i$, $i = 1,\dots, q$ be the nodes of the $q$-stage Gauss-Legendre quadrature rule on $[0,1]$ and let 
\begin{align}
\label{definition_tilde_m}
\mathbf{M} := D^{-1/2} \mathcal{M} \, D^{1/2} \quad \mbox{with  } D := \mathrm{diag}(s^\GL_1,\dots,s^\GL_q).
\end{align}
Then there is $\alpha > 0$ such that
\begin{align*}
	\Re \big( \x \cdot \mathbf{M}\x \big) \ge \alpha |\x|^2 \quad \forall \x \in \C^q.
\end{align*}
\end{lemma}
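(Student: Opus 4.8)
The plan is to show that the real symmetric part of $\mathbf{M}$ is positive definite, from which the existence of $\alpha>0$ follows immediately, with $\alpha$ equal to its smallest eigenvalue. Since $\mathbf{M}=D^{-1/2}\mathcal{M}D^{1/2}$ is a \emph{real} matrix, for $\x\in\C^q$ one has $\Re\big(\x\cdot\mathbf{M}\x\big)=\x\cdot S\x$ with $S=\tfrac12(\mathbf{M}+\mathbf{M}^\top)$, so it suffices to prove $\x\cdot S\x>0$ for $\x\neq0$. The first step is to simplify the entries of $\mathcal{M}$. Since $\hat\ell_j$ has degree $q$ and $\ell_i$ degree $q-1$, the integrand $\hat\ell_j'\ell_i$ in \eqref{mcoeff} has degree $2q-2\le 2q-1$, so the $q$-stage Gauss--Legendre rule \eqref{GLquadrature} evaluates it exactly; using $\ell_i(s^\GL_k)=\delta_{ik}$ this gives $m_{ij}=w^\GL_i\,\hat\ell_j'(s^\GL_i)$ for $1\le i,j\le q$.

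Next I would reinterpret the quadratic form through a polynomial. Given $\x\in\C^q$, set $U_i:=\sqrt{s^\GL_i}\,x_i$ and let $U$ be the unique polynomial of degree $\le q$ with $U(0)=0$ and $U(s^\GL_i)=U_i$, i.e. $U(s)=\sum_{j=1}^q\hat\ell_j(s)U_j$ (that $\hat\ell_j(0)=0$ for $j\ge1$, and hence $U(0)=0$, is exactly why the node $s^\GL_0=0$ must be excluded). With the identity for $m_{ij}$ and the definition of $\mathbf{M}$ one computes $(\mathbf{M}\x)_i=(s^\GL_i)^{-1/2}w^\GL_i\,U'(s^\GL_i)$, whence
\[
\x\cdot\mathbf{M}\x=\sum_{i=1}^q w^\GL_i\,\frac{\overline{U(s^\GL_i)}\,U'(s^\GL_i)}{s^\GL_i}.
\]
Because $U(0)=0$, the degree-$(2q-1)$ polynomial $s\mapsto\overline{U(s)}\,U'(s)$ vanishes at $s=0$, so $s\mapsto\overline{U(s)}U'(s)/s$ is a polynomial of degree $2q-2$ and the Gauss rule is again exact, giving $\x\cdot\mathbf{M}\x=\int_0^1 s^{-1}\,\overline{U(s)}\,U'(s)\,\d s$.

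Taking real parts and using $\Re\big(\overline{U}U'\big)=\tfrac12(|U|^2)'$, an integration by parts yields
\[
\Re\big(\x\cdot\mathbf{M}\x\big)=\frac12\Big(|U(1)|^2+\int_0^1\frac{|U(s)|^2}{s^2}\,\d s\Big),
\]
where the boundary contribution at $s=0$ drops out since $U(0)=0$ forces $|U(s)|^2=\mathcal{O}(s^2)$. Both terms are nonnegative, and they vanish simultaneously only if $U\equiv0$, i.e. $\x=0$; hence $S$ is positive definite and $\alpha:=\lambda_{\min}(S)>0$ does the job. I expect the only genuinely delicate point to be recognizing this algebraic structure: that the weighting $D^{\mp1/2}$ together with the Gauss weights collapses the form into $\int_0^1 s^{-1}\overline{U}\,U'\,\d s$, and that discarding the node $s^\GL_0=0$ (the Petrov--Galerkin feature giving $U(0)=0$) is precisely what makes both the second application of quadrature exact and the boundary term at $0$ vanish. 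Everything else is routine.
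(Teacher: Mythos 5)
Your proof is correct. Note first that the paper itself offers no proof of this lemma: it is imported verbatim as \cite[Lemma 2.1]{Makridakis99}, so there is nothing internal to compare against. Your argument is a sound, self-contained reconstruction in the spirit of the original reference. All the key steps check out: the exactness of the $q$-stage Gauss rule for the degree-$(2q-2)$ integrand gives $m_{ij}=w^\GL_i\hat\ell_j'(s^\GL_i)$; the substitution $U_i=\sqrt{s^\GL_i}\,x_i$ absorbs the similarity transform $D^{\mp 1/2}$ and yields $\x\cdot\mathbf{M}\x=\sum_i w^\GL_i\,\overline{U(s^\GL_i)}U'(s^\GL_i)/s^\GL_i$; the constraint $U(0)=0$ (coming from dropping the node $s^\GL_0=0$) makes $s\mapsto\overline{U(s)}U'(s)/s$ a polynomial of degree $2q-2$, so the quadrature is exact a second time; and the integration by parts produces the manifestly nonnegative expression $\tfrac12\bigl(|U(1)|^2+\int_0^1 s^{-2}|U(s)|^2\d s\bigr)$, which vanishes only for $U\equiv 0$, i.e.\ $\x=0$. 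One harmless pedantic point: for complex $\x$ the function $s\mapsto\overline{U(s)}$ is the polynomial with conjugated coefficients, which is what makes the quadrature-exactness argument legitimate; you implicitly use this correctly. The passage from strict positivity to a uniform $\alpha>0$ via the smallest eigenvalue of the symmetric part (or, equivalently, compactness of the unit sphere) is also fine.
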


The existence of a solution of the truncated problem \eqref{auxproblem} in the general case with potential $V$ and angular velocity $\Omega$ is now a straightforward generalization of the result from \cite{Makridakis99} and \cite{Makridakis98}. Therefore we omit the proof here and only state the result in the following lemma.

\begin{lemma} \label{existencefull}
Under the assumptions \ref{A1}-\ref{A8} there are $\tau_0,h_0 > 0$ such that for every $0 < \tau < \tau_0$ and $0 < h < h_0$ the problem \eqref{auxproblem} has at least one solution $\ut_{h,\tau} \in V_q$.
\end{lemma}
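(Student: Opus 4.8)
The plan is to fix a time step $n$ and, assuming the value $\ut^{n,0}_{h,\tau} = \ut_{h,\tau}(t_n) \in S_h$ is already known from the previous step (with $\ut^{0,0}_{h,\tau} = P_h u_0$), to produce the remaining coefficients $\ut^{n,1}_{h,\tau},\dots,\ut^{n,q}_{h,\tau} \in S_h$ that solve the truncated analogue of the system \eqref{cGscheme2}, i.e.\ with $|u_{h,\tau}|^2 u_{h,\tau}$ replaced by $f(\ut_{h,\tau})$; existence of $\ut_{h,\tau}\in V_q$ then follows by induction over $n$. On the finite-dimensional space $(S_h)^q$ I would define a map $\Phi=(\Phi_1,\dots,\Phi_q)$ by letting $\Phi_i(\mathbf{u})\in S_h$ be the Riesz representative (w.r.t.\ $(\cdot,\cdot)$) of the left-hand side of the $i$-th truncated equation, so that solving the system is equivalent to finding a zero of $\Phi$. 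Since $f$ is globally Lipschitz by Lemma \ref{cutoff} iii) and the remaining terms are linear, $\Phi$ is continuous, and a zero will be obtained from Browder's fixed-point theorem \cite[Lemma 4]{Browder1965} (cf.\ \cite[Lemma 3.1]{ADK91}).

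The heart of the argument is the sign condition required by Browder's theorem. I would test the $i$-th equation with $\psi = \ut^{n,i}_{h,\tau}$, multiply by the positive weight $(s^\GL_i)^{-1}$, sum over $i=1,\dots,q$ and take the real part — equivalently, evaluate $\Re\langle\Phi(\mathbf{u}),\mathbf{u}\rangle_*$ for the weighted inner product $\langle\mathbf{v},\mathbf{w}\rangle_*:=\sum_i (s^\GL_i)^{-1}(v^i,w^i)$ on $(S_h)^q$. The term coming from $\H$ equals $-\i\,\tau_n w^\GL_i (s^\GL_i)^{-1}\|\ut^{n,i}_{h,\tau}\|_\H^2$, which is purely imaginary and therefore drops out under $\Re$. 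For the mass-matrix part $\Re\sum_{i,j=1}^q (s^\GL_i)^{-1}m_{ij}(\ut^{n,j}_{h,\tau},\ut^{n,i}_{h,\tau})$ I would invoke Lemma \ref{stability}: the weighting by $(s^\GL_i)^{-1}$ is precisely the one realising the similarity transform \eqref{definition_tilde_m} that turns $\mathcal{M}$ into the coercive matrix $\mathbf{M}$, so expanding each $\ut^{n,i}_{h,\tau}$ in an $L^2(\D)$-orthonormal basis of $S_h$ and applying the lemma componentwise yields the coercive lower bound $\alpha\sum_{i=1}^q (s^\GL_i)^{-1}\|\ut^{n,i}_{h,\tau}\|_{L^2(\D)}^2$.

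It remains to control the two lower-order contributions against this coercive term. The part involving the known datum is $\Re\sum_i (s^\GL_i)^{-1}m_{i0}(\ut^{n,0}_{h,\tau},\ut^{n,i}_{h,\tau})$, bounded by $C\|\ut^{n,0}_{h,\tau}\|_{L^2(\D)}\,R$ with $R^2:=\sum_i (s^\GL_i)^{-1}\|\ut^{n,i}_{h,\tau}\|_{L^2(\D)}^2$. For the nonlinear term I would use the sublinearity $|f(z)|\le C|z|$ from Lemma \ref{cutoff} ii): since $\ut_{h,\tau}(t)=\sum_{j=0}^q \hat{\ell}_{n,j}(t)\ut^{n,j}_{h,\tau}$, one gets $\|f(\ut_{h,\tau}(t))\|_{L^2(\D)}\le C\sum_j\|\ut^{n,j}_{h,\tau}\|_{L^2(\D)}$, and the time integral contributes a factor $\mathcal{O}(\tau_n)$, so the whole nonlinear contribution is $\le C\tau_n R^2$. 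Choosing $\tau_0$ small enough that $C\tau_0\le \alpha/2$, the tested expression is bounded below by $\tfrac{\alpha}{2}R^2 - C\|\ut^{n,0}_{h,\tau}\|_{L^2(\D)}R$, which is nonnegative once $R$ reaches $R_0:=2C\|\ut^{n,0}_{h,\tau}\|_{L^2(\D)}/\alpha$. Browder's theorem then yields a zero of $\Phi$ inside the ball of radius $R_0$, i.e.\ the desired $\ut^{n,1}_{h,\tau},\dots,\ut^{n,q}_{h,\tau}$.

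The main obstacle I anticipate is the second step: transporting the purely algebraic coercivity of Lemma \ref{stability} to the $S_h$-valued block system, where one must track the conjugate-linearity of $(\cdot,\cdot)$ and the weights $(s^\GL_i)^{-1}$ so that the transform \eqref{definition_tilde_m} is reproduced exactly. Compared with \cite{Makridakis99,Makridakis98} the only new feature is the presence of $\Omega$ and $V$ inside $(\cdot,\cdot)_\H$, but since that term is annihilated by $\Re$ it does not interfere; the truncation is essential precisely because it reduces the cubic nonlinearity to the linearly bounded $f$, without which the nonlinear contribution could not be absorbed into the coercive estimate for large $R$.
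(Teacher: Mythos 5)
Your proposal is correct and follows exactly the route the paper indicates (and deliberately omits): Browder's fixed-point theorem combined with the coercivity of Lemma \ref{stability} under the weighted testing that reproduces the transform \eqref{definition_tilde_m} — the same computation the paper carries out explicitly in the proof of Lemma \ref{existencesemi}. The only cosmetic slip is that your bound for the nonlinear term, since the sum over $j$ starts at $j=0$, also produces a harmless $C\tau_n\|\ut^{n,0}_{h,\tau}\|_{L^2(\D)}R$ contribution, which is absorbed exactly like the mass-matrix term involving the known datum.
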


\sect{Error analysis of the semi-discrete scheme}
\label{sec:errorsemi}

In this section we introduce the semi-discrete version (discrete in time and continuous in space) of the numerical scheme with truncated nonlinearity as formulated in \eqref{auxproblem}. This semi-discrete auxiliary problem is crucial for ultimately avoiding mesh coupling conditions in the error analysis. We prove that the auxiliary problem is well-posed and we derive error estimates for the corresponding solution. In particular, we show $L^\infty(L^\infty)$-estimates for the semi-discrete approximation of the truncated problem such that the attained approximation coincides with the corresponding semi-discrete approximation of the original problem \eqref{weakGPE} (i.e. without truncation).\\[0.5em]
We start with defining the semi-discrete spaces
\begin{align*}
	\W_q & := \Big\{ v: \D \times I \rightarrow \C\,: \, v_{|\D \times I_n}(x,t) = \sum_{j = 0}^q t^j \chi_j(x), \, \chi_j \in H^1_0(\D) \Big\}, \\
	 \W_q^n & := \{ v_{|\D \times I_n}: v \in \W_q \},
\end{align*}
and then introduce the semi-discrete truncated problem as follows:

\begin{definition}[Truncated semi-discrete cG($q$)-scheme for GPE]
Assume \ref{A1}-\ref{A4}, \ref{A7}, \ref{A8} and let $u_0 \in H^1_0(\D)$. The truncated semi-discrete approximation is given by a function $\ut_{\tau} \in \W_q$ such that for all $n = 0,\dots, N-1$
\begin{align} \label{semiproblem}
\begin{split}
	\int_{I_n} (\i \partial_t \ut_{\tau}, v) - (\ut_{\tau}, v)_{\H} - \beta (f(\ut_{\tau}),v) \d t & = 0 \quad \forall v \in \W_{q-1}, \\
	\ut_{\tau}^{n+} & = \ut_{\tau}(t_{n}),
\end{split}
\end{align}
where $\ut_{\tau}^{n+} = \lim_{t \searrow t_n} \ut_{\tau}(t)$ and $\ut_{\tau}^{0+} = u_0$. \\
\end{definition}

\subsection{Existence of the truncated semi-discrete approximation}

We show that the truncated semi-discrete problem \eqref{semiproblem} has at least one solution.

\begin{lemma} \label{existencesemi}
Under the assumptions \ref{A1}-\ref{A4}, \ref{A7} and \ref{A8} there is $\tau_0 > 0$ such that for every $0 < \tau < \tau_0$ the problem \eqref{semiproblem} has at least one solution $\ut_{\tau} \in \W_q$. Moreover, $\ut_\tau \in C(I,H^1_0(\D) \cap H^2(\D))$.
\end{lemma}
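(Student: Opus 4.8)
The plan is to solve \eqref{semiproblem} time step by time step and, on each interval $I_n$, to produce the stage values $\ut_\tau^{n,i} \in H^1_0(\D)$ by a Galerkin-in-space approximation combined with the fixed-point argument already behind the fully-discrete Lemma \ref{existencefull}. I assume inductively that $\ut_\tau(t_n) \in H^2(\D)\cap H^1_0(\D)$ is available (with $\ut_\tau(t_0)=u_0 \in H^2(\D)\cap H^1_0(\D)$ by \ref{A8}). Exactly as in Section \ref{section:reformulation}, testing \eqref{semiproblem} with $v(t)=\ell_{n,i}(t)\,\psi$, $\psi \in H^1_0(\D)$, shows that the restriction of \eqref{semiproblem} to $I_n$ is equivalent to the stagewise system obtained from \eqref{cGscheme2} by replacing $S_h$ with $H^1_0(\D)$ and $g$ with $f$; the nonlinear time integral is again integrated exactly by a $2q$-point Gauss rule. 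The matrix $\mathcal{M}$ from \eqref{mcoeff} and its transform $\mathbf{M}$ from \eqref{definition_tilde_m} enter precisely as in Lemma \ref{existencefull}.

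For existence in the infinite-dimensional spatial setting I would first fix a finite-dimensional $\H$-conforming subspace. Let $\{\phi_k\}_{k\ge 1}\subset H^1_0(\D)\cap H^2(\D)$ be the $L^2$-orthonormal eigenfunctions of $\H$ (which exist since, by Lemma \ref{elliptic}, $\H$ is self-adjoint, positive and has compact resolvent on the bounded domain $\D$) and set $X_m=\mathrm{span}\{\phi_1,\dots,\phi_m\}$. On $X_m$ the per-step problem is identical in structure to the fully-discrete scheme, and the existence proof behind Lemma \ref{existencefull} — Browder's fixed-point theorem together with the coercivity $\Re(\x\cdot\mathbf{M}\x)\ge\alpha|\x|^2$ of Lemma \ref{stability} and the linear growth $|f(z)|\le C|z|$ of Lemma \ref{cutoff}(ii), valid for $\tau<\tau_0$ — applies verbatim; crucially, this argument uses neither the inverse estimate \ref{A6} nor the approximation estimate \ref{A5}, so it is legitimate for the eigenfunction spaces $X_m$. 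This yields a Galerkin solution $\ut_\tau^{(m)}\in\W_q$ with coefficients in $X_m$.

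Next I would pass to the limit $m\to\infty$. The truncated energy $\tilde E$ from \eqref{modified_energy} is conserved at the nodes $t_n$ by the argument of Proposition \ref{energyconservation} (testing with $\partial_t\ut_\tau^{(m)}$), and since $\beta\,\Gamma(|\cdot|^2)\ge 0$, Lemma \ref{elliptic} turns this into a bound $\|\ut_\tau^{(m)}(t_n)\|_{H^1(\D)}^2 \le 2\tilde E(u_0)/C_1^2$ that is uniform in $n$ and $m$; the coercivity of Lemma \ref{stability} then controls all stage values $(\ut_\tau^{(m)})^{n,i}$ in $H^1_0(\D)$ uniformly in $m$. Since on each $I_n$ the function $\ut_\tau^{(m)}$ is a fixed-degree polynomial in time determined by these finitely many stage values, no compactness in time is needed: extracting weak $H^1_0(\D)$-limits of the stage values identifies a candidate $\ut_\tau\in\W_q$, and the compact embedding of $H^1_0(\D)$ into $L^2(\D)$ upgrades this to strong $L^2(\D)$-convergence, so that $f\big((\ut_\tau^{(m)})^{n,i}\big)\to f(\ut_\tau^{n,i})$ in $L^2(\D)$ by the Lipschitz bound of Lemma \ref{cutoff}(iii). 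Passing to the limit in the weak form produces a solution of \eqref{semiproblem}.

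Finally, for the regularity $\ut_\tau\in C(I,H^1_0(\D)\cap H^2(\D))$, I would read off from the stagewise relation that, at each node, the functional $\psi\mapsto(\ut_\tau^{n,i},\psi)_\H=(\i\partial_t\ut_\tau(t^\GL_{n,i}),\psi)-\beta(f(\ut_\tau(t^\GL_{n,i})),\psi)$ is bounded on $L^2(\D)$, because $\partial_t\ut_\tau(t^\GL_{n,i})\in H^1_0(\D)\subset L^2(\D)$ and $f(\ut_\tau(t^\GL_{n,i}))\in L^2(\D)$ by the linear growth bound. Hence $\H\ut_\tau^{n,i}$ is identified with an $L^2(\D)$-function, and Lemma \ref{elliptic}(ii) gives $\ut_\tau^{n,i}\in H^2(\D)\cap H^1_0(\D)$ with $\|\ut_\tau^{n,i}\|_{H^2(\D)}\le C\|\H\ut_\tau^{n,i}\|_{L^2(\D)}$. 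Since the polynomial coefficients on $I_n$ are linear combinations of these stage values, $\ut_\tau(t)\in H^2(\D)\cap H^1_0(\D)$ for every $t$, polynomially and hence continuously in time; the jump conditions glue the pieces into a globally continuous function, closing the induction. The main obstacle I anticipate is the uniform-in-$m$ control of \emph{all} stage values, not merely the nodal energy: this is exactly where the coercivity of Lemma \ref{stability} must be combined with the linear growth of the truncated nonlinearity so that no constant depends on the Galerkin level, and it is also the step that makes the subsequent elliptic-regularity argument legitimate.
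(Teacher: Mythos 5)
Your proposal is correct and follows essentially the same route as the paper's proof: a spatial Galerkin approximation (the paper takes $\mathcal{P}^1$ finite element spaces so that Lemma \ref{existencefull} applies directly, you take eigenspaces of $\H$ and re-run the Browder argument), uniform bounds on the nodal values from conservation of the truncated energy and on the scaled stage values from Lemma \ref{stability}, weak $H^1_0(\D)$ / strong $L^2(\D)$ compactness combined with the Lipschitz continuity of $f$ to pass to the limit, and elliptic regularity via Lemma \ref{elliptic} for the $H^2$ statement. One cosmetic remark: the pointwise identity $(\ut_\tau^{n,i},\psi)_\H=(\i\partial_t\ut_\tau(t^\GL_{n,i}),\psi)-\beta(f(\ut_\tau(t^\GL_{n,i})),\psi)$ is not exact, since the scheme is of collocation type only for the linear part (the nonlinearity enters through $\int_{I_n}\ell_{n,i}(f(\ut_\tau),\psi)\d t$, which the $q$-point Gauss rule does not integrate exactly); the conclusion $\H\ut_\tau^{n,i}\in L^2(\D)$ nevertheless holds because that integral still defines a bounded functional of $\psi\in L^2(\D)$ by Lemma \ref{cutoff}(ii), which is all the stagewise relation requires.
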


\begin{proof}
We choose a finite dimensional subspace $S_h \subset H^1_0(\D)$ such that assumptions \ref{A5}-\ref{A7} are satisfied by this space. For instance, this can be achieved by taking $S_h$ as the space of linear Lagrange finite elements on a (quasi) uniform mesh with mesh size $h > 0$. Then Lemma \ref{existencefull} guarantees the existence of $\ut_{h,\tau} \in V_q$ for every $h > 0$ satisfying \eqref{auxproblem}. Next we pass to the limit $h \rightarrow 0$ and show that $\ut_{h,\tau} \in V_{q}$ converges to some $\ut_\tau \in \W_q$ satisfying \eqref{semiproblem}. Now $\ut_{h,\tau}$ preserves the truncated energy $\tilde{E}$ from \eqref{modified_energy} and by \ref{A7} we have $\| P_h u_0 \|_{L^{\infty}(\D)} \le C_\infty | u_0 |_{H^2(\D)} + \| u_0 \|_{L^\infty(\D)} < M$. So since $\beta \ge 0$ by \ref{A2} we can estimate
\begin{align*}
	c \| \nabla \ut_{h,\tau}(t_n) \|_{L^2(\D)}^2 \le \tilde{E}(\ut_{h,\tau}(t_n)) = \tilde{E}( P_h u_0) = E(P_h u_0) & \le C \| P_h u_0 \|_{H^1(\D)}^2 + C \| P_h u_0 \|_{H^1(\D)}^4 \\
	& \le C \| u_0 \|_{H^1(\D)}^2 + C \| u_0 \|_{H^1(\D)}^4
\end{align*}
for some generic constants $c,C > 0$ independent on $h$. Here we used the Sobolev embedding $H^1_0(\D) \subset L^4(\D)$ and \ref{A5}. Hence $\ut_{h,\tau}(t_n)$ is uniformly bounded in $H_0^1(\D)$ for all $h > 0$ and $n = 0,\dots,N-1$. Now $\ut_{h,\tau}$ can be expressed uniquely on $I_n$ as
\begin{align} \label{auxproof}
	\ut_{h,\tau}(t) = \sum_{j = 0}^q \hat{\ell}_{n,j}(t) \, \ut_{h,\tau}^{n,j}, \quad \mbox{for } t \in I_n,
\end{align}
for some $\ut_{h,\tau}^{n,j} \in S_h$, $j = 1,\dots,q$ and with $\ut_{h,\tau}^{n,0} = \ut_{h,\tau}(t_n) \in S_h$. Introducing $\tilde{\mathbf{u}}_{h,\tau}^{n,j} = (s^\GL_j)^{-\frac{1}{2}} \ut_{h,\tau}^{n,j}$, $j = 1,\dots,q$ equation \eqref{auxproblem} can be equivalently rewritten as
\begin{align} \label{auxproblem2}
	\sum_{j = 1}^q \mathbf{m}_{ij} (\tilde{\mathbf{u}}_{h,\tau}^{n,j},\psi) - \i \tau_n w^\GL_i ( \tilde{\mathbf{u}}_{h,\tau}^{n,i}, \psi )_{\H} - \i \beta \int_{I_n} (s^\GL_i)^{-\frac{1}{2}} \ell_{n,i} (f(\ut_{h,\tau}),\psi) \d t + (s^\GL_i)^{-\frac{1}{2}} m_{i0} (\ut_{h,\tau}^{n,0}, \psi) = 0
\end{align}
for all $i = 1,\dots,q$ and $\psi \in S_h$ and with $\mathbf{m}_{ij} = (\mathbf{M})_{ij}$ from \eqref{definition_tilde_m}. Now we test in \eqref{auxproblem2} with $\psi = \tilde{\mathbf{u}}_{h,\tau}^{n,i}$, sum over $i = 1,\dots,q$, take the real part and use Lemma \ref{stability} to obtain
\begin{align*}
	\alpha \sum_{i = 1}^q \| \tilde{\mathbf{u}}_{h,\tau}^{n,i} \|_{L^2(\D)}^2 \le C\tau_n \sum_{i = 1}^q \| \tilde{\mathbf{u}}_{h,\tau}^{n,i} \|_{L^2(\D)}^2 + C \| \ut_{h,\tau}(t_n) \|_{L^2(\D)} \Big( \sum_{i = 1}^q \| \tilde{\mathbf{u}}_{h,\tau}^{n,i} \|_{L^2(\D)}^2 \Big)^{1/2}.
\end{align*}
So for $\tau$ sufficiently small this shows uniform bounds of $\tilde{\mathbf{u}}_{h,\tau}^{n,i}$ in $L^2(\D)$. On the other hand, when taking the imaginary part we obtain
\begin{align*}
	\| \tilde{\mathbf{u}}_{h,\tau}^{n,i} \|_{\H}^2 & \le \Big| \sum_{j = 1}^q \mathbf{m}_{ij} ( \tilde{\mathbf{u}}_{h,\tau}^{n,j}, \tilde{\mathbf{u}}_{h,\tau}^{n,i}) - \i\beta \int_{I_n} (s^\GL_i)^{-\frac{1}{2}} \ell_{n,i} (f(\tilde{u}_{h,\tau}),\tilde{\mathbf{u}}_{h,\tau}^{n,i}) \d t + (s^\GL_i)^{-\frac{1}{2}} m_{i0} (\ut_{h,\tau}(t_n), \tilde{\mathbf{u}}_{h,\tau}^{n,i}) \Big| \\
	& \le C \sum_{j = 1}^q \| \tilde{\mathbf{u}}_{h,\tau}^{n,j} \|_{L^2(\D)}^2 + C \| \ut_{h,\tau}(t_n) \|_{L^2(\D)} \| \tilde{\mathbf{u}}_{h,\tau}^{n,i} \|_{L^2(\D)}.
\end{align*}
So by Lemma \ref{elliptic} this shows that $\tilde{\mathbf{u}}_{h,\tau}^{n,j}$ is uniformly bounded in $H^1_0(\D)$ for every $n = 0,\dots,N-1$, $j = 1,\dots,q$ and $h > 0$. Next we set $\tilde{\mathbf{u}}_{h,\tau}^{n,0} = \ut_{h,\tau}(t_n)$ which we proved to be uniformly bounded in $H^1_0(\D)$ for all $n = 0,\dots,N-1$ and $h > 0$. Then there is a sequence $h_N \rightarrow 0$, $N \rightarrow \infty$ and weak limits $\tilde{\mathbf{u}}_\tau^{n,j} \in H^1_0(\D)$, $j = 0,\dots,q$, $n = 0,\dots, N-1$ such that
\begin{align*}
	& \tilde{\mathbf{u}}_{h_N,\tau}^{n,j} \rightarrow \tilde{\mathbf{u}}_{\tau}^{n,j} \quad \text{strongly in }L^2(\D) \quad \text{and} \quad \tilde{\mathbf{u}}_{h_N,\tau}^{n,j} \rightarrow \tilde{\mathbf{u}}_{\tau}^{n,j} \quad \text{weakly in }H^1_0(\D)
\end{align*}
for $N \rightarrow \infty$. This implies
\begin{align*}
	(\tilde{\mathbf{u}}_{h_N,\tau}^{n,j}, \psi) \rightarrow (\tilde{\mathbf{u}}_{\tau}^{n,j}, \psi) \quad \text{and} \quad ( \tilde{\mathbf{u}}_{h_N,\tau}^{n,j} , \psi )_\H \rightarrow (\tilde{\mathbf{u}}_{\tau}^{n,j}, \psi )_\H
\end{align*}
for all $\psi \in H^1_0(\D)$. Next we set $\tilde{\mathbf{u}}_{\tau}(t) := \sum_{i = 0}^q \hat{\ell}_{n,i}(t) \tilde{\mathbf{u}}_{\tau}^{n,i}$ and $\tilde{\mathbf{u}}_{h_N,\tau}(t) = \sum_{i = 0}^q \hat{\ell}_{n,i}(t) \tilde{\mathbf{u}}_{h_N,\tau}^{n,i}$ for $t \in I_n$. Then we have
\begin{align*}
	\int_{I_n} \| \tilde{\mathbf{u}}_{\tau}(t) - \tilde{\mathbf{u}}_{h_N,\tau}(t) \|_{L^2(\D)} \d t \le \sum_{i = 0}^q \int_{I_n} |\hat{\ell}_{n,i}(t)| \d t \, \| \tilde{\mathbf{u}}_{\tau}^{n,i} - \tilde{\mathbf{u}}_{h_N,\tau}^{n,i} \|_{L^2(\D)} \rightarrow 0, \quad N \rightarrow \infty.
\end{align*}
This gives with Lemma \ref{cutoff}
\begin{align*}
	\Big \| \int_{I_n} (s^\GL_i)^{-\frac{1}{2}} \ell_{n,i}(t) \big( f(\tilde{\mathbf{u}}_{\tau}(t)) - f(\tilde{\mathbf{u}}_{h_N,\tau}(t)) \big) \d t \Big\|_{L^2(\D)} \le C \int_{I_n} \| \tilde{\mathbf{u}}_{\tau}(t) - \tilde{\mathbf{u}}_{h_N,\tau}(t) \|_{L^2(\D)} \d t \rightarrow 0
\end{align*}
for $N \rightarrow \infty$ and we conclude
\begin{align*}
	\beta \int_{I_n} (s^\GL_i)^{-\frac{1}{2}} \ell_{n,i}(t) (f(\tilde{\mathbf{u}}_{h_N,\tau}(t)),\psi) \d t \rightarrow \beta \int_{I_n} (s^\GL_i)^{-\frac{1}{2}} \ell_{n,i}(t) (f(\tilde{\mathbf{u}}_{\tau}(t)),\psi) \d t
\end{align*}
for all $\psi \in H^1_0(\D)$. Collecting all the convergence results and passing to the limit yields that $\ut_{\tau}$ defined by
\begin{align*}
	\ut_\tau(t) = \sum_{i = 0}^q \hat\ell_{n,i}(t) (s^\GL_i)^\frac{1}{2} \tilde{\mathbf{u}}_\tau^{n,i}, \quad t \in I_n
\end{align*}
solves \eqref{semiproblem} on $I_n$. In particular, the convergence $\ut_{h,\tau} \rightarrow \ut_\tau$ is uniformly in $\overline{I}$ and hence $\ut_\tau \in \W_q$. In view of \eqref{auxproblem2} the regularity of $\ut_{\tau}$ is a standard $H^2$-regularity argument for elliptic problems on convex polyhedral domains; see e.g. \cite{GilbargTrudinger01}. This completes the proof.
\end{proof}

\subsection{Error estimates of the truncated semi-discrete approximation}

In this section we derive a priori error estimates of the truncated semi-discrete approximation from \eqref{semiproblem} in the $L^\infty(I,H^1(\D))$- and $L^\infty(I,H^2(\D))$-norm. The latter one then allows for an uniform estimate in $L^\infty(I \times \D)$ of the semi-discrete approximation. This is an essential interim result in order to prove our main result. Before starting the error analysis we note the following elementary lemma.

\begin{lemma} \label{normequiv}
There are constants $C_1,C_2 > 0$ such that for all $v \in \W_q^n$ with $v = \sum_{j=0}^q \hat{\ell}_{n,j} v_j$, $v_j \in H^1_0(\D)$ it holds
\begin{align*}
	C_1 \sum_{j=0}^q \tau_n \| v_j \|_{L^2(\D)}^2 \le \| v \|_{L^2(I_n \times \D)}^2 \le C_2 \sum_{j=0}^q \tau_n \| v_j \|_{L^2(\D)}^2.
\end{align*}
\end{lemma}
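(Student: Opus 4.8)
The plan is to exploit that every $v \in \W_q^n$ is a polynomial of degree $q$ in time with coefficients in $L^2(\D)$, so the statement reduces to a norm equivalence for the Lagrange basis on the \emph{reference} interval, tensored with the complex Hilbert space $L^2(\D)$. First I would pass to $[0,1]$ via $t = t_n + s\tau_n$. Using $\hat{\ell}_{n,j}(t_n + s\tau_n) = \hat{\ell}_j(s)$, the fact that the Lagrange polynomials are real, and expanding the square, I obtain the identity
\begin{align*}
	\| v \|_{L^2(I_n \times \D)}^2 = \int_{I_n} \int_\D \Big| \sum_{j=0}^q \hat{\ell}_{n,j}(t) v_j \Big|^2 \d\x \d t = \tau_n \sum_{j,k=0}^q \hat{G}_{jk} \, (v_k, v_j), \quad \hat{G}_{jk} := \int_0^1 \hat{\ell}_j(s) \hat{\ell}_k(s) \d s,
\end{align*}
where the explicit factor $\tau_n$, together with the fact that $\hat{G}$ does not depend on $n$ or $\tau_n$, is precisely what will make the final constants uniform.

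The key structural observation is that $\hat{G} \in \R^{(q+1) \times (q+1)}$ is the Gram matrix of the basis $\{\hat{\ell}_0, \dots, \hat{\ell}_q\}$ of $\mathcal{P}^q([0,1])$ with respect to the $L^2(0,1)$-inner product; since these polynomials are linearly independent, $\hat{G}$ is real, symmetric and positive definite. Let $\mu_{\min}, \mu_{\max} > 0$ be its extreme eigenvalues. It then remains to establish the Hilbert-space-valued bound
\begin{align*}
	\mu_{\min} \sum_{j=0}^q \| v_j \|_{L^2(\D)}^2 \le \sum_{j,k=0}^q \hat{G}_{jk} \, (v_k, v_j) \le \mu_{\max} \sum_{j=0}^q \| v_j \|_{L^2(\D)}^2.
\end{align*}
For this I would fix an orthonormal basis $\{e_m\}$ of $L^2(\D)$, write $v_j = \sum_m c_{jm} e_m$, and use $(v_k, v_j) = \sum_m \overline{c_{km}} c_{jm}$ to get $\sum_{j,k} \hat{G}_{jk}(v_k, v_j) = \sum_m \mathbf{c}_m^* \hat{G} \mathbf{c}_m$ with $\mathbf{c}_m = (c_{0m}, \dots, c_{qm})^\top$. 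Since $\hat{G}$ is real symmetric positive definite, $\mathbf{c}^* \hat{G} \mathbf{c} \in \R$ and $\mu_{\min} |\mathbf{c}|^2 \le \mathbf{c}^* \hat{G} \mathbf{c} \le \mu_{\max} |\mathbf{c}|^2$ for every $\mathbf{c} \in \C^{q+1}$; summing over $m$ and using $\sum_m |c_{jm}|^2 = \| v_j \|_{L^2(\D)}^2$ yields the two-sided estimate. Combined with the reference-interval identity, this gives the claim with $C_1 = \mu_{\min}$ and $C_2 = \mu_{\max}$, which depend only on $q$ and the Gauss--Legendre nodes and are therefore independent of $n$ and $\tau_n$.

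There is no serious obstacle here: the result is just a Hilbert-space-valued version of the elementary fact that a basis induces an equivalence between a function and its coefficient vector, with the $\tau_n$-scaling dictated purely by the time substitution. The only points requiring genuine care are the conjugate-linear convention for $(\cdot,\cdot)$, which is why the form $\mathbf{c}^* \hat{G} \mathbf{c}$ appears and why it is automatically real for real symmetric $\hat{G}$, and the bookkeeping confirming that $\hat{G}$, and hence $C_1, C_2$, is independent of the time step. One could alternatively avoid the orthonormal-basis expansion by invoking directly the general principle that a symmetric positive definite matrix $A$ induces $\mu_{\min}(A) \sum_j \| v_j \|^2 \le \sum_{j,k} A_{jk}(v_k, v_j) \le \mu_{\max}(A) \sum_j \| v_j \|^2$ on tuples in any inner product space, but the expansion keeps the argument fully self-contained.
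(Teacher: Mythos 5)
Your proof is correct, but it follows a different route from the paper's. The paper dispatches the two inequalities separately and more crudely: the upper bound is a direct Cauchy--Schwarz expansion $\| v \|_{L^2(I_n \times \D)}^2 \le C\sum_j \int_{I_n}|\hat{\ell}_{n,j}|^2\d t\,\|v_j\|_{L^2(\D)}^2$, and the lower bound is obtained from the temporal inverse estimate $\|p\|_{L^\infty(I_n)} \le C\tau_n^{-1/2}\|p\|_{L^2(I_n)}$ for polynomials of degree $q$ (citing Brenner--Scott), applied via $\|v_j\|_{L^2(\D)} = \|v(t^\GL_{n,j})\|_{L^2(\D)} \le \|v\|_{L^\infty(I_n,L^2(\D))}$. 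You instead compute the exact identity $\|v\|_{L^2(I_n\times\D)}^2 = \tau_n\sum_{j,k}\hat{G}_{jk}(v_k,v_j)$ with the reference Gram matrix $\hat{G}$ and read off both bounds at once from its extreme eigenvalues, transferred to $L^2(\D)$-valued tuples by an orthonormal expansion (your handling of the conjugate-linear convention and the symmetry of $\hat{G}$ is correct, and the reindexing that turns $\sum_{j,k}\hat{G}_{jk}\overline{c_{km}}c_{jm}$ into $\mathbf{c}_m^*\hat{G}\mathbf{c}_m$ does use that symmetry). What your approach buys is a self-contained argument with explicit, simultaneously sharp constants $C_1=\mu_{\min}(\hat{G})$, $C_2=\mu_{\max}(\hat{G})$ depending only on $q$ and the node set; what the paper's approach buys is brevity and the reuse of a standard cited inverse inequality, at the cost of unspecified constants. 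Both correctly isolate the $\tau_n$-scaling through the affine change of variables, so the constants are uniform in $n$ and $\tau_n$ in either version.
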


\begin{proof}
The estimate from below follows by the inverse estimate $\| \hat{\ell}_{n,i} \|_{L^\infty(I_n)} \le C\tau_n^{-1/2} \| \hat{\ell}_{n,i} \|_{L^2(I_n)}$, see \cite[Lemma 4.5.3]{BrennerScott08}). The estimate from above is obtained directly by
\begin{align*}
	\| v \|_{L^2(I_n \times \D)}^2 \le \sum_{j = 0}^q \int_{I_n} |\hat{\ell}_{n,j}(t)|^2 \d t \| v_j \|_{L^2(\D)}^2 \le C_2 \sum_{j=0}^q \tau_n \| v_j \|_{L^2(\D)}^2.
\end{align*}
\end{proof}

Now the idea of the further error analysis in this section is to compare the truncated semi-discrete approximation $\ut_\tau$ from \eqref{semiproblem} with the interpolation of the solution $u$ at the $q+1$-Gauss-Lobatto quadrature nodes on $\overline{I_n}$. The quadrature nodes are denoted by $t_n = t^\Lo_{n,0} < t^\Lo_{n,1} < \cdots < t^\Lo_{n,q} = t_{n+1}$ and are chosen such that the Gauss-Lobatto quadrature rule satisfies
\begin{align*}
	\int_{I_n} p(s) \d s = \sum_{j = 0}^{q} w^\Lo_{n,j} \,\, p(t^\Lo_{n,j}) \quad \forall p \in \mathcal{P}^{2q-1}(I_n)
\end{align*}
with suitable weights $w^\Lo_{n,j}$, $j = 0,\dots, q$. Then we define the interpolation operator w.r.t. the nodes $t^\Lo_{n,0}, \dots,t^\Lo_{n,q}$ on $\overline{I}$ by
\begin{align} \label{GaussLobattoInterpolation}
\begin{split}
	& \I^\Lo_\tau: C(\overline{I},L^2(\D)) \rightarrow C(\overline{I},L^2(\D)) \quad \text{s.t.} \quad (\I^\Lo_\tau v)_{|I_n} \in \mathcal{P}^{q}(I_n) \quad \text{and} \quad (\I^\Lo_\tau v)(t^\Lo_{n,j}) = v(t^\Lo_{n,j})
\end{split}
\end{align}
for all $n = 0,\dots,N-1$ and $j = 0,\dots,q$. \\
Using the Gauss-Lobatto interpolation $\I^\Lo_\tau u$ we now split the error into
\begin{align} \label{errorsplit}
	u - \ut_\tau = u- \I^\Lo_\tau u + e_\tau, \quad e_\tau := \I^\Lo_\tau u - \ut_\tau.
\end{align}
Then standard Lagrange interpolation estimates (see e.g. \cite[Theorem 4.4.4]{BrennerScott08}) imply for $p = 2$ or $p = \infty$ the estimate
\begin{align} \label{LagrangeEstimate}
	\| v - \I^\Lo_\tau v \|_{L^p(I_n,L^2(\D))} \le C \tau_n^{q+1} \| \partial_t^{q+1} v \|_{L^p(I_n,L^2(\D))}, \quad \forall v \in W^{q+1,p}(I_n,L^2(\D))
\end{align}
for some constant $C > 0$. Hence the first term in \eqref{errorsplit} is easily estimated in appropriate norms due to the regularity assumption \ref{A8} on $u$. So we now focus on $e_\tau$ and first introduce the notation
\begin{align*}
	e_\tau^{n,j} := e_\tau(t^\GL_{n,j}), \quad e_\tau^n := e_\tau(t_n), \quad \text{for} \quad n = 0,\dots,N-1, \quad j = 0,\dots,q.
\end{align*}
Then a short computation shows that $e_\tau$ satisfies the error equation
\begin{align} \label{erroreq}
\begin{split}
	\sum_{j=0}^q m_{ij} (e_\tau^{n,j}, \psi) - \i \tau_n w^\GL_i & ( e_\tau^{n,i}, \psi )_\H - \i \beta \int_{I_n} \ell_{n,i}(f(\I^\Lo_\tau u) - f(\ut_\tau),\psi) \d t \\
	&  = (A^{n,i} - \i B^{n,i}, \psi) - \i \beta \int_{I_n} \ell_{n,i} (f(\I^\Lo_\tau u) - f(u), \psi)\d t
\end{split}
\end{align}
for all $\psi \in H^1_0(\D)$, $i = 1,\dots,q$ and with the quadrature error terms
\begin{align*}
	A^{n,i} & := \sum_{j=0}^q w^\Lo_{n,j} \ell_{n,i}'(t^\Lo_{n,j}) u(t^\Lo_{n,j}) -  \int_{I_n} \ell_{n,i}' u \d t, \\
	B^{n,i} & := \sum_{j=0}^q w^\Lo_{n,j} \ell_{n,i}(t^\Lo_{n,j}) \H u(t^\Lo_{n,j}) - \int_{I_n} \ell_{n,i} \H u \d t.
\end{align*}
Note that by the regularity assumption \ref{A8}, the solution $u$ satisfies $\H u \in C(I,H^1_0(\D))$ so that $A^{n,i}$ and $B^{n,i}$ are well-defined and belong to $H^1_0(\D)$. Futhermore, since $e_\tau^{n,i} \in H^2(\D)$ we can rewrite the critical term $(e_{\tau}^{n,i},\psi)_{\H}$ in \eqref{erroreq} as $(\H e_\tau^{n,i}, \psi)$ which is even well-defined for $\psi \in L^2(\D)$. From there, we conclude that the error equation \eqref{erroreq} even holds for all $\psi \in L^2(\D)$. \\
The rest of this section is structured as follows: First we derive uniform estimates for $e_\tau$ w.r.t. the $H^1(\D)$-norm which will lead to corresponding $H^1(\D)$-estimates for the error $u - \ut_\tau$. The results are then used to conclude uniform estimates in the $H^2(\D)$-norm. Sobolev embedding will then imply estimates w.r.t. $L^\infty(I \times \D)$ for the error $u - \ut_\tau$ and therefore of the truncated semi-discrete approximation $\ut_\tau$ from \eqref{semiproblem} as well.

\begin{remark}
In \cite{Makridakis99} the error of the time discretization is handled by an interplay of the Gauss-Lobatto interpolation and the Gauss-Legendre interpolation. The first one is used to handle the consistency whereas the stability of the scheme is shown by the important observation that the Gauss-Legendre interpolation coincides with the $L^2(I_n)$-projection of $\mathcal{P}^{q}(I_n)$ onto $\mathcal{P}^{q-1}(I_n)$ (cf. \eqref{interpolation-projection}). We use the same idea for the $L^2(H^1(\D))$-estimates of the truncated semi-discrete approximation and to conclude the $L^\infty(I \times \D)$-estimates of the fully-discrete approximation in section \ref{section:proof-main-result}. However, for the important $L^\infty(H^2(\D))$-estimates of the truncated semi-discrete approximation, the stability is handled in a different way, starting from a more general form of \eqref{erroreq} (cf. \eqref{erroreq0}) and not exploiting that the interpolation coincides with the projection.
\end{remark}

\subsubsection{$L^\infty(I,H^1(\D))$-estimates} \label{secH1semi}

We start by estimating the quadrature error terms $A^{n,i}$ and $B^{n,i}$.

\begin{lemma} \label{ABest}
Under the assumptions \ref{A1}-\ref{A4} and \ref{A8} there are constants $C,\tau_0 > 0$ such that for all $0 < \tau_n < \tau_0$, $i = 1,\dots,q$ and $n = 0,\dots,N-1$ there hold
\begin{align*}
	\| \nabla A^{n,i} \|_{L^2(\D)} & \le C \tau_n^{q + \frac{3}{2}} \| \partial_t^{q+2} \nabla u \|_{L^2(I_n \times \D)}, \\
	\| \nabla B^{n,i} \|_{L^2(\D)} & \le C \tau_n^{q + \frac{3}{2}} \| \partial_t^{q+1}\nabla \H u \|_{L^2(I_n \times \D)}.
\end{align*}
\end{lemma}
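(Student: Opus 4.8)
The plan is to read $A^{n,i}$ and $B^{n,i}$ as Gauss--Lobatto quadrature errors for $L^2(\D)$-valued integrands, to exploit the exactness of the rule on $\mathcal{P}^{2q-1}(I_n)$ in order to subtract off a polynomial, and to convert this into the claimed powers of $\tau_n$ by scaling to the reference interval together with a Bochner-valued Bramble--Hilbert estimate. Denoting by
\[
  \mathcal{E}_n(w):=\sum_{j=0}^q w^\Lo_{n,j}\,w(t^\Lo_{n,j})-\int_{I_n}w\d t
\]
the quadrature error functional acting on $w\in C(\overline{I}_n,L^2(\D))$, we have $A^{n,i}=\mathcal{E}_n(\ell_{n,i}'u)$ and $B^{n,i}=\mathcal{E}_n(\ell_{n,i}\H u)$. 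Since the spatial gradient is a bounded, time-independent operator, it commutes both with the (finite) quadrature sum and with the Bochner integral, so that $\nabla A^{n,i}=\mathcal{E}_n(\ell_{n,i}'\nabla u)$ and $\nabla B^{n,i}=\mathcal{E}_n(\ell_{n,i}\nabla\H u)$; by \ref{A8} these integrands are continuous with values in $L^2(\D)$ and the time derivatives $\partial_t^{q+2}\nabla u$ and $\partial_t^{q+1}\nabla\H u$ belong to $L^2(I_n\times\D)$.

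The decisive algebraic fact is that $\mathcal{E}_n$ annihilates the relevant products. As $\ell_{n,i}'$ has degree $q-2$ and $\ell_{n,i}$ has degree $q-1$, one has $\ell_{n,i}'p\in\mathcal{P}^{2q-1}(I_n)$ for every $p\in\mathcal{P}^{q+1}(I_n)$ and $\ell_{n,i}p\in\mathcal{P}^{2q-1}(I_n)$ for every $p\in\mathcal{P}^{q}(I_n)$. The exactness of the Gauss--Lobatto rule on $\mathcal{P}^{2q-1}(I_n)$ therefore yields $\mathcal{E}_n(\ell_{n,i}'p)=0$ for all $L^2(\D)$-valued $p$ of temporal degree $\le q+1$ and $\mathcal{E}_n(\ell_{n,i}p)=0$ for all $L^2(\D)$-valued $p$ of temporal degree $\le q$. (For $q=1$ the factor $\ell_{n,i}'$ vanishes identically, so $A^{n,i}=0$ and its bound is trivial.)

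I would then transform to the reference interval $[0,1]$ via $t=t_n+s\tau_n$. Writing $\hat{g}(s):=\nabla u(t_n+s\tau_n)$, $\hat{h}(s):=\nabla\H u(t_n+s\tau_n)$ and using $\ell_{n,i}(t_n+s\tau_n)=\ell_i(s)$, $\ell_{n,i}'(t_n+s\tau_n)=\tau_n^{-1}\ell_i'(s)$ and $w^\Lo_{n,j}=\tau_n\hat{w}_j$ (with $\hat{w}_j$ the reference Gauss--Lobatto weights), a direct computation gives $\nabla A^{n,i}=\hat{\mathcal{E}}(\ell_i'\hat{g})$ and $\nabla B^{n,i}=\tau_n\,\hat{\mathcal{E}}(\ell_i\hat{h})$, where $\hat{\mathcal{E}}$ is the error functional of the reference rule; the factor $\tau_n^{-1}$ produced by differentiating $\ell_{n,i}$ in the $A$-term is absorbed by the $\tau_n$ from the change of the time measure, which is why both errors end up with the same power of $\tau_n$. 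The maps $\hat{g}\mapsto\hat{\mathcal{E}}(\ell_i'\hat{g})$ and $\hat{h}\mapsto\hat{\mathcal{E}}(\ell_i\hat{h})$ are bounded from $H^{q+2}([0,1],L^2(\D))$ resp.\ $H^{q+1}([0,1],L^2(\D))$ into $L^2(\D)$ (point evaluations being controlled through the Sobolev embedding into $C([0,1],L^2(\D))$) and, by the previous paragraph, they vanish on polynomials of degree $\le q+1$ resp.\ $\le q$. The Bochner-valued Bramble--Hilbert lemma (cf.\ \cite{BrennerScott08}) hence gives $\|\hat{\mathcal{E}}(\ell_i'\hat{g})\|_{L^2(\D)}\le C\,|\hat{g}|_{H^{q+2}([0,1],L^2(\D))}$ and $\|\hat{\mathcal{E}}(\ell_i\hat{h})\|_{L^2(\D)}\le C\,|\hat{h}|_{H^{q+1}([0,1],L^2(\D))}$ with $\tau_n$-independent constants. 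Scaling the seminorms back, $|\hat{g}|_{H^{q+2}}=\tau_n^{q+3/2}\|\partial_t^{q+2}\nabla u\|_{L^2(I_n\times\D)}$ and $|\hat{h}|_{H^{q+1}}=\tau_n^{q+1/2}\|\partial_t^{q+1}\nabla\H u\|_{L^2(I_n\times\D)}$, from which the $A$-bound is immediate and the $B$-bound follows after multiplying by the remaining factor $\tau_n$.

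The one genuinely delicate point is the bookkeeping of the powers of $\tau_n$ under the reference transformation: the cancellation between the $\tau_n^{-1}$ from $\ell_{n,i}'$ and the $\tau_n$ from the time measure is precisely what forces the use of the extra time derivative ($\partial_t^{q+2}$ rather than $\partial_t^{q+1}$) in the estimate for $A^{n,i}$, and getting this alignment right is the heart of the argument. Everything else is routine: the degree counts above are sharp, so the $\mathcal{P}^{2q-1}$-exactness is exactly sufficient, and \ref{A8} provides precisely the norms $\|\partial_t^{q+2}\nabla u\|_{L^2(I_n\times\D)}$ and $\|\partial_t^{q+1}\nabla\H u\|_{L^2(I_n\times\D)}$ appearing on the right-hand side.
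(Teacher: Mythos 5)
Your proof is correct, and it rests on the same key observation as the paper's: the Gauss--Lobatto rule with $q+1$ nodes is exact on $\mathcal{P}^{2q-1}(I_n)$, and since $\ell_{n,i}'$ has degree $q-2$ while $\ell_{n,i}$ has degree $q-1$, the quadrature error functional annihilates $\ell_{n,i}'p$ for $p\in\mathcal{P}^{q+1}(I_n)$ and $\ell_{n,i}p$ for $p\in\mathcal{P}^{q}(I_n)$ --- which is precisely what forces the extra time derivative in the $A$-bound. Where you differ is in how this polynomial reproduction is converted into an estimate. The paper inserts explicit interpolants: the Gauss--Lobatto interpolant $\I^\Lo_\tau$ of degree $q$ for $B^{n,i}$ and the augmented interpolant $\I^{\Lop}_\tau$ of degree $q+1$ (Lobatto nodes plus one extra point) for $A^{n,i}$, writes $\nabla B^{n,i}=\int_{I_n}\ell_{n,i}(\I^\Lo_\tau-I)\nabla\H u\d t$ and $\nabla A^{n,i}=\int_{I_n}\ell_{n,i}'(\I^{\Lop}_\tau-I)\nabla u\d t$, and concludes by Cauchy--Schwarz together with the Lagrange interpolation estimate \eqref{LagrangeEstimate}, using $\int_{I_n}|\ell_{n,i}|^2\d t\le C\tau_n$ and $\int_{I_n}|\ell_{n,i}'|^2\d t\le C\tau_n^{-1}$. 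You instead scale to the reference interval and invoke a Bochner-valued Bramble--Hilbert lemma. The two routes are essentially equivalent in content: your degree counts, your $\tau_n$-bookkeeping under the reference map (including the cancellation of the $\tau_n^{-1}$ from $\ell_{n,i}'$ against the Jacobian) and your scaled seminorms all check out, and the degenerate case $q=1$, where $\ell_{n,i}'\equiv 0$ and $A^{n,i}=0$, is correctly noted. The paper's version is marginally more self-contained, relying only on the interpolation bound already stated in the text, whereas yours leans on the (standard but here unproved) vector-valued Bramble--Hilbert lemma; in exchange, your formulation makes the exact degree of polynomial reproduction, and hence the sharpness of the exponents $\tau_n^{q+3/2}$, more transparent.
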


\begin{proof}
We start with the second estimate of $\nabla B^{n,i}$ and recall the interpolation operator $\I^\Lo_\tau$ from \eqref{GaussLobattoInterpolation}. Since $\ell_{n,i} \, \I^\Lo_\tau \nabla  \H u$ is a polynomial of degree $2q-1$ on $I_n$, we obtain by the exactness of the Gauss-Lobatto quadrature 
\begin{align*}
	\nabla B^{n,i} = \int_{I_n} \ell_{n,i} (\I^\Lo_\tau - I )\nabla \H u \d t .
\end{align*}
So using the interpolation estimate \eqref{LagrangeEstimate} we obtain
\begin{align*}
	\| \nabla B^{n,i} \|_{L^2(I_n \times \D)} \le\Big( \int_{I_n} |\ell_{n,i}|^2 \d t \Big)^{\frac{1}{2}} \| (I - \I^\Lo_\tau)\nabla \H u \|_{L^2(I_n \times \D)} \le C \tau_n^{q + \frac{3}{2}} \| \partial^{q+1}_t \nabla \H u \|_{L^2(I_n \times \D)}.
\end{align*}
To estimate $\nabla A^{n,i}$ with optimal order, we introduce $\I^{\Lop}_\tau$ to be the Lagrange interpolation to the $(q+1)$ Gauss-Lobatto points $t^\Lo_{n,j}$ plus any other point in $I_n$ different from $t^\Lo_{n,j}$. A corresponding standard error estimate (cf. \cite{BrennerScott08}) guarantees
\begin{align*}
	\| (I - \I^{\Lop}_\tau) v \|_{L^2(I_n \times \D)} \le C \tau_n^{q + 2} \| \partial_t^{q+2} v \|_{L^2(I_n \times \D)} \quad \forall v \in H^{q+2}(I,L^2(\D)).
\end{align*}
Since $\ell'_{n,i} \I^{\Lop}_\tau u$ is a polynomial of degree $2q-1$ on $I_n$ we obtain
\begin{align*}
	\nabla A^{n,i} = \int_{I_n} \ell'_{n,i} (\I^{\Lop}_\tau - I) \nabla u \d t.
\end{align*}
Therefore,
\begin{eqnarray*}
	\| \nabla A^{n,i} \|_{L^2(I_n \times \D)} & \le &\Big( \int_{I_n} |\ell'_{n,i}|^2 \d t \Big)^{\frac{1}{2}} \| (I - \I^{\Lop}_\tau) \nabla u \|_{L^2(I_n \times \D)} \\
	& \le& C \, \tau_n^{-\frac{1}{2}} \| (I - \I^{\Lop}_\tau) \nabla u \|_{L^2(I_n \times \D)}  \hspace{5pt}
	\le \hspace{5pt} C \, \tau_n^{q + \frac{3}{2}} \|  \partial_t^{q+2} \nabla u \|_{L^2(I_n \times \D)}.
\end{eqnarray*}
\end{proof}

The basic strategy now is to test the error equation \eqref{erroreq} with $\psi = \H e_\tau^{n,i} \in L^2(D)$ (which is admissible), summing over $i = 1,\dots,q$ and taking real parts. Recalling the energy norm $\| \cdot \|_{\H}$ from Lemma \ref{elliptic}, we have the following local error estimate for $e_\tau$.

\begin{lemma} \label{consistencyH1}
Under the assumptions \ref{A1}-\ref{A4}, \ref{A7} and \ref{A8} there are constants $C,\tau_0 > 0$ such that for all $0 < \tau_n < \tau_0$ and $n = 0,\dots,N-1$ it holds
\begin{align*}
	\| \nabla e_\tau \|_{L^2(I_n \times \D)}^2 \le C \, \tau_n \, \| e_\tau^n \|_{\H}^2 + C_n(u) \, \tau_n^{2q+4}
\end{align*}
where
\begin{align*}
	C_n(u) = C \Big( \| \partial_t^{q+1} \nabla u \|_{L^2(I_n \times \D)}^2 + \| \partial_t^{q+2} \nabla u \|_{L^2(I_n \times \D)}^2 + \| \partial_t^{q+1} \nabla \H u \|_{L^2(I_n \times \D)}^2 \Big).
\end{align*}
\end{lemma}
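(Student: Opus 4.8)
The plan is to test the error equation \eqref{erroreq} with $\psi = \H e_\tau^{n,i}$, sum over $i$ and take real parts. This is legitimate because \eqref{erroreq} holds for all $\psi \in L^2(\D)$ once the critical term is rewritten as $(e_\tau^{n,i},\psi)_\H = (\H e_\tau^{n,i},\psi)$, and because $e_\tau^{n,i} = (\I^\Lo_\tau u - \ut_\tau)(t^\GL_{n,i}) \in H^2(\D)\cap H^1_0(\D)$ by \ref{A8} and the $H^2$-regularity of $\ut_\tau$ established in Lemma \ref{existencesemi}. To make the coefficient matrix $(m_{ij})_{i,j=1,\dots,q}$ accessible to the stability Lemma \ref{stability}, I first rescale as in the proof of Lemma \ref{existencesemi}, setting $\tilde e^{\,n,i}_\tau := (s^\GL_i)^{-1/2} e_\tau^{n,i}$ and multiplying the $i$-th equation by $(s^\GL_i)^{-1/2}$; this replaces $\mathcal{M}$ by $\mathbf{M} = D^{-1/2}\mathcal{M}D^{1/2}$ from \eqref{definition_tilde_m}.

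I would then test with $\psi = \H \tilde e^{\,n,i}_\tau \in L^2(\D)$, sum over $i = 1,\dots,q$ and take real parts. The term in which $\H$ acts as an operator becomes $-\i\tau_n w^\GL_i (s^\GL_i)^{-1/2}\|\H e_\tau^{n,i}\|_{L^2(\D)}^2$, which is purely imaginary and hence drops out. Using the self-adjointness identity $(g,\H v) = (g,v)_\H$ for $g\in H^1_0(\D)$ and $v\in H^2(\D)\cap H^1_0(\D)$ (a consequence of Lemma \ref{elliptic} i)), the leading term reduces to $\Re\,\trace(\mathbf{M}G)$, where $G = \big((\tilde e^{\,n,j}_\tau,\tilde e^{\,n,i}_\tau)_\H\big)_{i,j}$ is the Hermitian, positive semidefinite $\H$-Gram matrix of the rescaled nodal values. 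The decisive step, which I expect to be the main obstacle, is to upgrade the scalar estimate of Lemma \ref{stability} to the trace inequality $\Re\,\trace(\mathbf{M}G)\ge\alpha\,\trace(G) = \alpha\sum_{i=1}^q\|\tilde e^{\,n,i}_\tau\|_\H^2$; this follows by diagonalizing $G = \sum_k\lambda_k v_k v_k^*$ with $\lambda_k\ge0$ and applying $\Re(v_k^*\mathbf{M}v_k)\ge\alpha|v_k|^2$ to each eigenvector.

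It then remains to bound the right-hand side contributions, converting every $L^2$-pairing with $\H\tilde e^{\,n,i}_\tau$ back into an $\H$-pairing via the same identity. The initial term $\Re\sum_i(s^\GL_i)^{-1/2}m_{i0}(e_\tau^n,\tilde e^{\,n,i}_\tau)_\H$ is controlled by Cauchy--Schwarz and Young's inequality by $\tfrac{\alpha}{8}\sum_i\|\tilde e^{\,n,i}_\tau\|_\H^2 + C\|e_\tau^n\|_\H^2$. For the quadrature errors I would invoke Lemma \ref{ABest}: since $A^{n,i},B^{n,i}\in H^1_0(\D)$, the Poincar\'e--Friedrichs inequality upgrades the gradient bounds there to full $\H$-norm bounds of order $\tau_n^{q+3/2}$, which after Young's inequality yields $\tfrac{\alpha}{8}\sum_i\|\tilde e^{\,n,i}_\tau\|_\H^2 + C\tau_n^{2q+3}\big(\|\partial_t^{q+2}\nabla u\|_{L^2(I_n\times\D)}^2 + \|\partial_t^{q+1}\nabla\H u\|_{L^2(I_n\times\D)}^2\big)$. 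For the nonlinear terms I would use $\I^\Lo_\tau u - \ut_\tau = e_\tau$ together with Lemma \ref{cutoff} iii)--iv), whose $W^{1,\infty}$-hypothesis on $\I^\Lo_\tau u$ is met for $\tau$ small by \ref{A8} and \eqref{LagrangeEstimate}; the difference $f(\I^\Lo_\tau u)-f(\ut_\tau)$ then contributes a term which, after Cauchy--Schwarz in time and the $\H$-analogue of Lemma \ref{normequiv}, is bounded by $C\tau_n\sum_{j=0}^q\|e_\tau^{n,j}\|_\H^2$, while the consistency difference $f(\I^\Lo_\tau u)-f(u)$ is estimated with Lemma \ref{cutoff} and \eqref{LagrangeEstimate} by $C\tau_n^{2q+3}\|\partial_t^{q+1}\nabla u\|_{L^2(I_n\times\D)}^2$.

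For $\tau_n$ sufficiently small the $j\ge 1$ part of the nonlinear difference term is absorbed into the coercive term $\tfrac{\alpha}{2}\sum_i\|\tilde e^{\,n,i}_\tau\|_\H^2$, while its $j=0$ part is another $\|e_\tau^n\|_\H^2$-contribution; collecting everything and using $(s^\GL_i)^{-1}\ge 1$ gives $\sum_{i=1}^q\|e_\tau^{n,i}\|_\H^2 \le C\|e_\tau^n\|_\H^2 + C\tau_n^{2q+3}C_n(u)$. Finally, applying Lemma \ref{normequiv} with the $H^1(\D)$-norm (equivalent to $\|\cdot\|_\H$ by Lemma \ref{elliptic}) in place of the $L^2(\D)$-norm yields $\|\nabla e_\tau\|_{L^2(I_n\times\D)}^2 \le C\tau_n\sum_{j=0}^q\|e_\tau^{n,j}\|_\H^2$, which combined with the previous estimate gives the claim $\|\nabla e_\tau\|_{L^2(I_n\times\D)}^2 \le C\tau_n\|e_\tau^n\|_\H^2 + C_n(u)\tau_n^{2q+4}$.
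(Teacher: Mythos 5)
Your proposal is correct and follows essentially the same route as the paper: rescale by $(s^\GL_i)^{-1/2}$, test \eqref{erroreq} with $\psi=(s^\GL_i)^{-1/2}\H\mathbf{e}_\tau^{n,i}$, take real parts so the purely imaginary $\H$-term drops, bound the left-hand side via Lemma \ref{stability} (whose extension to the $\H$-Gram matrix you make explicit via diagonalization, a step the paper leaves implicit), estimate the nonlinear, quadrature and initial terms exactly as you describe, and conclude with Lemma \ref{normequiv} and absorption for small $\tau_n$. The only cosmetic difference is that you absorb the $e_\tau$-dependent nonlinear contribution before applying Lemma \ref{normequiv}, whereas the paper carries a $C\tau_n^2\|\nabla e_\tau\|_{L^2(I_n\times\D)}^2$ term to the final display and absorbs it there.
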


\begin{proof}
Similarly, as in the proof of Lemma \ref{existencefull} we introduce $\mathbf{e}_\tau^{n,j} = (s^\GL_j)^{-\frac{1}{2}} e_\tau^{n,j}$, $j = 1,\dots,q$. Then testing in \eqref{erroreq} with $\psi = (s^\GL_i)^{-\frac{1}{2}} \H \mathbf{e}_\tau^{n,i} \in L^2(\D)$, summing over $i = 1,\dots,q$ and taking real parts yields
\begin{align} \label{erroreqL2}
\begin{split}
	\Re \sum_{i,j=1}^q \mathbf{m}_{ij} (\mathbf{e}_\tau^{n,j}, \mathbf{e}_\tau^{n,i})_\H & =  \Im \beta \sum_{i = 0}^q \int_{I_n} (s^\GL_i)^{-\frac{1}{2}} \ell_{n,i}(f(\ut_\tau) - f(\I^\Lo_\tau u),\mathbf{e}_\tau^{n,i})_\H \d t \\
	& \quad + \Im \beta \sum_{i = 0}^q \int_{I_n} (s^\GL_i)^{-\frac{1}{2}} \ell_{n,i} (f(\I^\Lo_\tau u) - f(u), \mathbf{e}_\tau^{n,i})_\H \d t \\
	& \quad + \Re \sum_{i = 0}^q (s^\GL_i)^{-\frac{1}{2}} (A^{n,i} - \i B^{n,i}, \mathbf{e}_\tau^{n,i})_\H  \\
	& \quad - \Re \sum_{i = 0}^q (s^\GL_i)^{-\frac{1}{2}} m_{i0} \ell_{n,i}(t_n) (e_\tau^n,  \mathbf{e}_\tau^{n,i})_\H .
\end{split}
\end{align} 
Next we estimate the terms on the right hand side. By standard Lagrange interpolation estimates (cf. \cite[Theorem 4.4.4]{BrennerScott08} and \eqref{LagrangeEstimate}) we have for all $t \in I$
\begin{align} \label{W1infty}
\begin{split}
	\| \I_\tau^\Lo u(t) \|_{W^{1,\infty}(\D)} & \le \| u(t) \|_{W^{1,\infty}(\D)} + \| u(t) - \I_\tau^\Lo u(t) \|_{W^{1,\infty}(\D)} \\
	& \le \| u(t) \|_{W^{1,\infty}(\D)} + C \tau_n^{q+1} \| \partial_t^{q+1} u(t) \|_{W^{1,\infty}(\D)} < \| u \|_{L^\infty(I,W^{1,\infty}(\D))} + 1
\end{split}
\end{align}
if we choose $\tau_0$ sufficiently small and due to \ref{A8}. Therefore, the Lipschitz estimate from Lemma \ref{cutoff} implies
\begin{eqnarray*}
\lefteqn{ \Big| \beta \sum_{i=1}^q \int_{I_n} (s^\GL_i)^{-\frac{1}{2}} \ell_{n,i} (f(\ut_\tau) - f(\I^\Lo_\tau u), \mathbf{e}_\tau^{n,i})_\H \d t \Big|} \\
	&\le& C \sum_{i=1}^q \int_{I_n} \| f(\ut_\tau) - f(\I^\Lo_\tau u) \|_{H^1(\D)} |\ell_{n,i}| \| \mathbf{e}_\tau^{n,i} \|_{H^1(\D)} \d t \\
	&\le& C \| \nabla e_\tau \|_{L^2(I_n \times \D)} \Big( \sum_{i=1}^q \int_{I_n} |\ell_{n,i}|^2 \d t \| \nabla \mathbf{e}_\tau^{n,i} \|_{L^2(\D)}^2 \Big)^{1/2} \\
		& \le& C \tau_n^{1/2} \| \nabla e_\tau \|_{L^2(I_n \times \D)} \Big( \sum_{i=1}^q \| \nabla \mathbf{e}_\tau^{n,i} \|_{L^2(\D)}^2 \Big)^{1/2}
\end{eqnarray*}
and similarly
\begin{eqnarray*}
	\lefteqn{ \Big| \beta \sum_{i=1}^q \int_{I_n} (s^\GL_i)^{-\frac{1}{2}} \ell_{n,i} (f(u) - f(\I^\Lo_\tau u), \mathbf{e}_\tau^{n,i})_\H \d t \Big| } \\
	& \le& C \tau_n^{1/2} \| \nabla (u - \I^\Lo_\tau u) \|_{L^2(I_n \times \D)} \Big( \sum_{i=1}^q \| \nabla \mathbf{e}_\tau^{n,i} \|_{L^2(\D)}^2 \Big)^{1/2}.
\end{eqnarray*}
Further,
\begin{eqnarray*}
	\lefteqn{\sum_{i=1}^q (s^\GL_i)^{-\frac{1}{2}} (A^{n,i} - \i B^{n,i}, \mathbf{e}_\tau^{n,i})_\H  \hspace{5pt}\le \hspace{5pt}C \sum_{i=1}^q \| A^{n,i} - \i B^{n,i} \|_{H^1(\D)} \| \mathbf{e}_\tau^{n,i} \|_{H^1(\D)} } \\
	& \le& C \Big( \sum_{i=1}^q \| \nabla A^{n,i} \|_{L^2(\D)}^2 + \| \nabla B^{n,i} \|_{L^2(\D)}^2 \Big)^{1/2} \Big( \sum_{i=1}^q \| \nabla \mathbf{e}_{\tau}^{n,i} \|_{L^2(\D)}^2 \Big)^{1/2}
\end{eqnarray*}
and
\begin{align*}
	|\sum_{i = 1}^q (s^\GL_i)^{-\frac{1}{2}} m_{i0} \ell_{n,i}(t_n) (e_\tau^n, \mathbf{e}_\tau^{n,i})_\H| \hspace{5pt}\le\hspace{5pt} C \| \nabla e_\tau^n \|_{L^2(\D)} \Big( \sum_{i=1}^q \| \nabla \mathbf{e}_\tau^{n,i} \|_{L^2(\D)}^2 \Big)^{1/2}.
\end{align*}
Now \eqref{erroreqL2} gives with Lemma \ref{stability} for some $\alpha, \tilde{\alpha} > 0$
\begin{align*}
	\alpha \sum_{i=1}^q \| \nabla e_\tau^{n,i} \|_{L^2(\D)}^2 & \le \tilde{\alpha} \sum_{i=1}^q \| \mathbf{e}_\tau^{n,i} \|_{\H}^2 \le \Re  \sum_{i,j=1}^q \mathbf{m}_{ij} (\mathbf{e}_\tau^{n,j}, \mathbf{e}_\tau^{n,i})_\H \\
	& \le C \Big( \sum_{i=1}^q \| \nabla \mathbf{e}_\tau^{n,i} \|_{L^2(\D)}^2 \Big)^{1/2} \Big\{ \| \nabla e_\tau^n \|_{L^2(\D)} + \tau_n^{1/2} \| \nabla e_\tau \|_{L^2(I_n \times \D)} \\
	& + \tau_n^{1/2} \| \nabla (u - \I^\Lo_\tau u) \|_{L^2(I_n \times \D)} + \Big( \sum_{i=1}^q \| \nabla  A^{n,i} \|_{L^2(\D)}^2 + \| \nabla B^{n,i} \|_{L^2(\D)}^2 \Big)^{1/2} \Big\}.
\end{align*}
With Lemma \ref{normequiv}, Lemma \ref{ABest} and \eqref{LagrangeEstimate} we therefore obtain
\begin{align*}
	\| \nabla e_\tau \|_{L^2(I_n \times \D)}^2 \le C \tau_n \sum_{i=0}^q \| \nabla e_\tau^{n,i} \|_{L^2(\D)}^2 & \le C \tau_n \| \nabla e_\tau^n \|_{\H}^2 + C \tau_n^2 \| \nabla e_\tau \|_{L^2(I_n \times \D)}^2 + C_n(u) \tau_n^{2q + 4}.
\end{align*}
This yields the desired estimate for $\tau_0$ sufficiently small.
\end{proof}

We proceed by showing the global $L^\infty(I,H^1(\D))$-error estimate of the truncated semi-discrete approximation $\ut_\tau$.

\begin{lemma} \label{semiH1estimate}
Under the assumptions \ref{A1}-\ref{A4}, \ref{A7} and \ref{A8} there exists $\tau_0 > 0$ such that we have for all $0 < \tau < \tau_0$:
\begin{align*}
	\| u - \ut_\tau \|_{L^\infty(I,H^1(\D))} \le C_I(u) \, \tau^{q+1}
\end{align*}
where
\begin{align*}
	C_I(u) := C \Big( \| \partial_t^{q+1} u \|_{L^\infty(I,H^1(\D))} + \| \partial_t^{q+2} u \|_{L^\infty(I,H^1(\D))} + \| \partial_t^{q+1} \H u \|_{L^\infty(I,H^1(\D))} \Big)
\end{align*}
for some constant $C > 0$ independent of $\tau_n$ and $u$.
\end{lemma}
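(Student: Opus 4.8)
The plan is to reduce everything, via the splitting \eqref{errorsplit}, to a bound on $e_\tau$ and then to combine the local estimate of Lemma~\ref{consistencyH1} with a discrete Gronwall argument for the endpoint energies $\|e_\tau^n\|_\H$, upgrading the resulting nodal control to a genuine $L^\infty$-in-time bound at the very end. First I would observe that the interpolation part $u-\I^\Lo_\tau u$ is controlled in $L^\infty(I,H^1(\D))$ directly by \eqref{LagrangeEstimate} together with the regularity \ref{A8}, so that only $\|e_\tau\|_{L^\infty(I,H^1(\D))}$ remains. Since the Gauss--Lobatto nodes include the interval endpoints, $e_\tau^0=\I^\Lo_\tau u(t_0)-\ut_\tau(t_0)=u_0-u_0=0$, which provides the vanishing starting value for the recursion.

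The core step is a recursion of the form $\|e_\tau^{n+1}\|_\H^2\le(1+C\tau_n)\|e_\tau^n\|_\H^2+D_n$ with $\sum_n D_n\le C_I(u)^2\,\tau^{2q+2}$. To obtain it I would test the integral error equation underlying \eqref{erroreq} with the admissible function $v=\partial_t e_\tau\in\W_{q-1}$ and take real parts. The term $(\i\partial_t e_\tau,\partial_t e_\tau)$ is purely imaginary and drops out, while $-\Re\int_{I_n}(e_\tau,\partial_t e_\tau)_\H\d t=-\tfrac12\big(\|e_\tau^{n+1}\|_\H^2-\|e_\tau^n\|_\H^2\big)$ produces exactly the telescoping energy difference that drives the recursion. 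The consistency contributions involving $A^{n,i}$, $B^{n,i}$ and $f(\I^\Lo_\tau u)-f(u)$ are of high order by Lemma~\ref{ABest}, \eqref{LagrangeEstimate} and the Lipschitz bound of Lemma~\ref{cutoff} (using the $W^{1,\infty}$-control \eqref{W1infty} on $\I^\Lo_\tau u$), and the interior terms $\|\nabla e_\tau\|_{L^2(I_n\times\D)}$ that arise are absorbed via Lemma~\ref{consistencyH1}.

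The hard part will be the nonlinear term $\beta\,\Re\int_{I_n}(f(\I^\Lo_\tau u)-f(\ut_\tau),\partial_t e_\tau)\d t$. Because testing with $\partial_t e_\tau$ leaves no coercive $\|\partial_t e_\tau\|_{L^2(\D)}^2$ on the left, a direct estimate combined with the temporal inverse inequality would cost a factor $\tau_n^{-1}$, destroy the crucial $(1+C\tau_n)$ structure, and yield a Gronwall constant that grows exponentially in $1/\tau$. I would instead integrate by parts in time, moving the derivative off $e_\tau$ and using the $L^2$-Lipschitz property (iii) of Lemma~\ref{cutoff}; the boundary term at $t=t_m$ (the one at $t_0$ vanishes since $e_\tau^0=0$) has the form $\big((f(\I^\Lo_\tau u)-f(\ut_\tau))(t_m),e_\tau^m\big)$ and can only be absorbed into $\tfrac12\|e_\tau^m\|_\H^2$ after a companion bound $\max_n\|e_\tau^n\|_{L^2(\D)}\le C\tau^{q+1}$ is available. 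This companion $L^\infty(I,L^2(\D))$-estimate is precisely where the projection--interpolation identity emphasized in the remark enters: testing with $\i$ times the Gauss--Legendre interpolant (which coincides with the $L^2(I_n)$-projection onto $\mathcal{P}^{q-1}(I_n)$) collapses the time-derivative term to the telescoping mass difference, and the easier mass recursion plus Lemma~\ref{elliptic} closes it.

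Once $\max_n\|e_\tau^n\|_\H\le C\tau^{q+1}$ follows from the discrete Gronwall inequality with $e_\tau^0=0$, I would return to the nodal version of Lemma~\ref{consistencyH1}, which bounds $\sum_{i=0}^q\|\nabla e_\tau^{n,i}\|_{L^2(\D)}^2$ by $C\|e_\tau^n\|_\H^2+C_n(u)\tau_n^{2q+3}$. As $e_\tau|_{I_n}$ is a polynomial of degree $q$ in time determined by its values at the $q+1$ nodes $t^\GL_{n,0},\dots,t^\GL_{n,q}$, boundedness of the associated Lagrange basis (a temporal norm equivalence in the spirit of Lemma~\ref{normequiv}) upgrades these nodal bounds to $\|e_\tau\|_{L^\infty(I_n,H^1(\D))}\le C\|e_\tau^n\|_\H+C\,C_n(u)^{1/2}\tau_n^{q+3/2}\le C\tau^{q+1}$. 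Taking the maximum over $n$ and adding back the interpolation error finally gives the claimed $L^\infty(I,H^1(\D))$-estimate, and tracking the constants through Lemma~\ref{ABest} and \eqref{LagrangeEstimate} produces exactly the constant $C_I(u)$ involving $\|\partial_t^{q+1}u\|$, $\|\partial_t^{q+2}u\|$ and $\|\partial_t^{q+1}\H u\|$ in $L^\infty(I,H^1(\D))$.
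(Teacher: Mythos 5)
Your overall architecture (splitting off the interpolation error, invoking the local estimate of Lemma~\ref{consistencyH1}, a discrete Gronwall recursion for $\|e_\tau^n\|_\H$ starting from $e_\tau^0=0$, and a final upgrade to $L^\infty$ in time) matches the paper, but the mechanism you choose to generate the energy recursion is not the paper's, and it contains a gap you have not closed. The paper does \emph{not} test with $\partial_t e_\tau$: it tests the nodal error equation \eqref{erroreq} with $\psi=\H e_\tau^{n,i}$, sums over $i$ and takes real parts; the telescoping quantity $\tfrac12\|e_\tau^{n+1}\|_\H^2-\tfrac12\|e_\tau^{n}\|_\H^2$ then comes from the identity \eqref{relation_mij}, i.e.\ from the fact that the Gauss--Legendre interpolant coincides with the $L^2(I_n)$-projection onto $\mathcal{P}^{q-1}(I_n)$ (see \eqref{interpolation-projection}). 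The decisive payoff of that choice is that the nonlinear term is paired with the fixed nodal values $e_\tau^{n,i}$ --- which are themselves of size $O(\tau^{q+1})$ --- so it is bounded by $C\|\nabla e_\tau\|_{L^2(I_n\times\D)}^2$ via Lemma~\ref{cutoff}\,iv) and Lemma~\ref{normequiv}, with no loss of powers of $\tau_n$ and hence a clean $(1+C\tau_n)$ recursion after inserting Lemma~\ref{consistencyH1}.

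In your version the nonlinear term is paired with $\partial_t e_\tau$. You correctly observe that the direct estimate costs a factor $\tau_n^{-1}$ and ruins the Gronwall structure, but the integration by parts you propose does not repair this. The interior term $\int_{I_n}(\partial_t[f(\I^\Lo_\tau u)-f(\ut_\tau)],e_\tau)\d t$ contains, after differentiating $f(z)=\gamma(|z|^2)z$ along both arguments, a contribution of the form $\int_{I_n}(Df(\I^\Lo_\tau u)\,\partial_t e_\tau,\,e_\tau)\d t$: this is not controlled by property iii) of Lemma~\ref{cutoff} (which concerns $f$ itself, not its derivative composed with a time derivative), it does not collapse to a total time derivative after taking imaginary parts because $Df$ is neither constant nor linear, and estimating it again forces $\|\partial_t e_\tau\|_{L^2(I_n\times\D)}\le C\tau_n^{-1}\|e_\tau\|_{L^2(I_n\times\D)}$ --- the same fatal factor. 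The companion $L^\infty(I,L^2(\D))$ bound you invoke only helps with the boundary terms of the integration by parts, not with this interior term. Note that the paper \emph{does} test with $\partial_t\H e_\tau$, but only in Lemma~\ref{consistencyH2}, where it deliberately accepts the resulting loss of one power of $\tau$ (order $\tau^{q}$ instead of $\tau^{q+1}$); carried out optimally, your route would be expected to suffer the same loss. To obtain the stated rate $\tau^{q+1}$ you should switch to the paper's test function $\H e_\tau^{n,i}$ together with the projection identity \eqref{interpolation-projection}.
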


\begin{proof}
The error splitting \eqref{errorsplit} and the interpolation estimate \eqref{LagrangeEstimate} give
\begin{align*}
	\| u - \ut_\tau \|_{L^\infty(I,H^1(\D))} & \le C \| \nabla u - \I^\Lo_\tau \nabla u \|_{L^\infty(I,L^2(\D))} + C \| \nabla e_\tau \|_{L^\infty(I,L^2(\D))} \\
	& \le C \tau^{q+1} \| \partial_t^{q+1} \nabla u \|_{L^\infty(I,L^2(\D))} + C \| \nabla e_\tau \|_{L^\infty(I,L^2(\D))}.
\end{align*}
So it remains to estimate the second term on the right hand side. Recall the $q$-stage Gauss-Legendre quadrature rule from \eqref{GLquadrature} and the transformed nodes $t^\GL_{n,j} = t_n + s_j^\GL \tau_n$ and weights $w^\GL_{n,j} = \tau_n w^\GL_j$. We now denote by $\I^\GL_{n,\tau}$ the Lagrange interpolation operator on $I_n$ associated with the nodes $t^\GL_{n,1},\dots,t^\GL_{n,q}$. Then for $v \in \mathcal{P}^q(I_n)$ and $\varphi \in \mathcal{P}^{q-1}(I_n)$ we have by the exactness of the quadrature rule
\begin{align} \label{interpolation-projection}
	\int_{I_n} (\I^\GL_{n,\tau} v) \varphi \d t = \sum_{j = 1}^q w^\GL_{n,j} (\I^\GL_{n,\tau} v)(t^\GL_{n,j}) \varphi(t^\GL_{n,j}) = \sum_{j = 1}^q w^\GL_{n,j} v(t^\GL_{n,j}) \varphi(t^\GL_{n,j}) = \int_{I_n} v \varphi \d t.
\end{align}
Thus,
\begin{align} \label{relation_mij}
\begin{split}
	\Re \sum_{i = 1}^q \sum_{j = 0}^q m_{ij} (e_\tau^{n,j},e_\tau^{n,i})_\H = \Re \int_{I_n} (\partial_t e_\tau, \I^\GL_{n,\tau} e_\tau )_\H \d t & = \Re \int_{I_n} (\partial_t e_\tau,e_\tau)_\H \d t \\
	&  = \frac{1}{2} \| e_\tau^{n+1} \|_{\H}^2 - \frac{1}{2} \| e_\tau^{n} \|_{\H}^2.
\end{split}
\end{align}
So testing the error equation \eqref{erroreq} with $\psi = \H e_\tau^{n,i}$, summing over $i=1,\dots,q$ and taking real parts lead to
\begin{align} \label{eq1}
\begin{split}
	& \frac{1}{2} \| e_\tau^{n+1} \|_{\H}^2 - \frac{1}{2} \| e_\tau^{n} \|_{\H}^2 =  \Im \beta \sum_{i = 1}^q \int_{I_n} \ell_{n,i} (f(\ut_\tau) - f(\I^\Lo_\tau u), e_\tau^{n,i})_\H \d t  \\
	& + \Im \beta \sum_{i = 1}^q \int_{I_n} \ell_{n,i} (f(\I^\Lo_\tau u) - f(u),e_\tau^{n,i})_\H \d t + \Re \sum_{i=1}^q (A^{n,i} - \i B^{n,i},e_\tau^{n,i}))_\H.
\end{split}
\end{align}

From \eqref{W1infty} we have that $\| \I^\Lo_\tau u (t) \|_{W^{1,\infty}(\D)} < \| u \|_{L^\infty(I,W^{1,\infty}(\D))} + 1$ for $t \in I$. Hence the Lipschitz estimate from Lemma \ref{cutoff} and the norm equivalence from Lemma \ref{normequiv} imply
\begin{align*}
	\Big| \sum_{i = 1}^q \int_{I_n} \ell_{n,i} (f(\ut_\tau) & - f(\I^\Lo_\tau u), e_\tau^{n,i})_\H \d t \Big| \le C \| \nabla e_\tau \|_{L^2(I_n \times \D)} \Big( \sum_{i=1}^q \int_{I_n} |\ell_{n,i}|^2 \d t \, \| \nabla e_\tau^{n,i} \|_{L^2(\D)}^2 \Big)^{1/2} \\
	& \le C \| \nabla e_\tau \|_{L^2(I_n \times \D)} \Big( \tau_n \sum_{i=1}^q \| \nabla e_\tau^{n,i} \|_{L^2(\D)}^2 \Big)^{1/2} \le C \| \nabla e_\tau \|_{L^2(I_n \times \D)}^2.
\end{align*}
Further, the interpolation estimate \eqref{LagrangeEstimate} gives
\begin{align*}
	\Big| \sum_{i = 1}^q \int_{I_n} \ell_{n,i} (f(u) - f(\I^\Lo_\tau u), e_\tau^{n,i})_\H \d t  \Big| & \le C \| \nabla (u - \I^\Lo_\tau u) \|_{L^2(I_n \times \D)} \| \nabla e_\tau \|_{L^2(I_n \times \D)} \\
	& \le C \tau_n^{q+1} \| \partial_t^{q+1} \nabla u\|_{L^2(I_n \times \D)} \| \nabla e_\tau \|_{L^2(I_n \times \D)}.
\end{align*}
Using Lemma \ref{ABest} we estimate
\begin{align*}
	\Big|\Re \sum_{i=1}^q (A^{n,i}, e_\tau^{n,i})_\H \Big| & \le C \sum_{i=1}^q \tau_n^{q+1} \| \partial_t^{q+2} \nabla u  \|_{L^2(I_n \times \D)} \tau_n^{1/2} \| \nabla e_\tau^{n,i} \|_{L^2(\D)} \\
	& \le C \tau_n^{2q+2} \| \partial_t^{q+2} \nabla u \|_{L^2(I_n \times \D)}^2 + C \| \nabla e_\tau \|_{L^2(I_n \times \D)}^2
\end{align*}
and similarly
\begin{align*}
	\Big|\Re \sum_{i=1}^q (B^{n,i}, e_\tau^{n,i})_\H \Big| \le C \tau_n^{2q+2} \| \partial_t^{q+1} \nabla \H u \|_{L^2(I_n \times \D)}^2 + C \| \nabla e_\tau \|_{L^2(I_n \times \D)}^2.
\end{align*}
Now \eqref{eq1} and the previous estimates imply
\begin{align} \label{consistent}
\begin{split}
	\| e_\tau^{n+1} \|_{\H}^2 & \le \| e_\tau^n \|_{\H}^2 + C \| \nabla e_\tau \|_{L^2(I_n \times \D)}^2 \\
	& \quad + C\tau_n^{2q + 2} \Big( \| \partial_t^{q+1} \nabla u \|_{L^2(I_n \times \D)}^2 + \| \partial_t^{q+2} \nabla u \|_{L^2(I_n \times \D)}^2 + \| \partial_t^{q+1} \nabla \H u \|_{L^2(I_n \times \D)}^2 \Big) \\
	& \le \| e_\tau^n \|_{\H}^2 + C \| \nabla e_\tau \|_{L^2(I_n \times \D)}^2 + C_n(u) \tau_n^{2q+2}
\end{split}
\end{align}
with $C_n(u)$ from Lemma \ref{consistencyH1}. Taking the local error estimate from Lemma \ref{consistencyH1} into account we obtain by recursion 
\begin{align} \label{recursionL2}
\begin{split}
	\| e_\tau^{n+1} \|_{\H}^2 \le \| e_\tau^n \|_{\H}^2 + C \| \nabla e_\tau \|_{L^2(I_n \times \D)}^2 + C_n(u) \tau_n^{2q+2} & \le \tau^{2q + 2} \sum_{m=0}^n (1+C^2\tau)^{n+1-m} C_m(u) \\
	& \le \tau^{2q + 2} e^{C^2t_{n+1}} \sum_{m=0}^n C_m(u)
\end{split}
\end{align}
and so, in particular,
\begin{align*}
	\| \nabla e_\tau \|_{L^2(I_n \times \D)}^2 \le C\tau_n \Big( \| e_\tau^n \|_{\H}^2 + \tau_n^{2q+3} C_n(u) \Big) \le C\tau_n \Big( \tau_n^{2q+2}  \sum_{m=0}^{n} C_m(u) \Big).
\end{align*}
Finally, the inverse inequality $\| v \|_{L^\infty(I_n)} \le C\tau_n^{-1/2} \| v \|_{L^2(I_n)}$ for $v \in \mathcal{P}^q(I_n)$ implies
\begin{align*}
	& \| \nabla e_\tau \|_{L^\infty(I_n,L^2(\D))}^2 \le C \tau_n^{-1} \| \nabla e_\tau \|_{L^2(I_n \times \D)}^2 \le C\tau_n^{2q+2} \sum_{m=0}^n C_m(u) \\
	& \le C \tau_n^{2q+2} \Big( \| \partial_t^{q+1} \nabla u \|_{L^\infty(I,L^2(\D))}^2 + \| \partial_t^{q+2} \nabla u \|_{L^\infty(I,L^2(\D))}^2 + \| \partial_t^{q+1} \nabla \H u \|_{L^\infty(I,L^2(\D))}^2 \Big).
\end{align*}
Taking square root and the maximum over $n = 0,\dots,N-1$ proves the claim.
\end{proof}

\subsubsection{$L^\infty(I,H^2(\D))$-estimates and $L^\infty(I \times \D)$-bounds}

This section is devoted to $L^\infty(I,H^2(\D))$-error estimates of the truncated semi-discrete approximation satisfying \eqref{semiproblem}. The estimates then imply uniform $L^\infty(\D)$-bounds of the semi-discrete approximation. We start by showing an intermediate result given by a local estimate for $\H e_\tau$ at the temporal nodes $t_n$. For that purpose, we note that the error equation \eqref{erroreq} for $e_\tau$ can be rewritten in the more general form
\begin{align} \label{erroreq0}
\begin{split}
	\int_{I_n} (\partial_t e_\tau, v) - \i (\H e_\tau, v) - \i \beta ( & f(\I^\Lo_\tau u) - f(\ut_\tau), v) \d t =  \int_{I_n} (\partial_t \I^\Lo_\tau u - \partial_t u, v) \\
	& - \i \int_{I_n}(\I^\Lo_\tau \H u - \H u, v) \d t - \i \beta \int_{I_n} (f(\I^\Lo_\tau u) - f(u), v) \d t
\end{split}
\end{align}
for all $v \in \W_{q-1}$. The idea now is to test the equation with $\partial_t \H e_\tau$ which is admissible since $e_\tau(t) \in H^2(\D)$ and then taking imaginary parts. This will result in the following estimate. 

\begin{lemma} \label{consistencyH2}
Under the assumptions \ref{A1}-\ref{A4}, \ref{A7} and \ref{A8} there are constants $C,\tau_0 > 0$ such that for all $0 < \tau_n < \tau_0$ and $n = 0,\dots,N-1$ there hold
\begin{align*}
	\| \H e_\tau^{n+1} \|_{L^2(\D)}^2 \le \| \H e_\tau^{n} \|_{L^2(\D)}^2 + C \tau_n^{-2} \| \nabla e_\tau \|_{L^2(I_n \times \D)}^2 + C_n(u) \tau_n^{2q+2}
\end{align*}
where $C_n(u)$ is defined as in Lemma \ref{consistencyH1}.
\end{lemma}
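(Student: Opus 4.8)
The plan is to test the error equation \eqref{erroreq0} with $v = \partial_t \H e_\tau$ and to take imaginary parts after integrating over $I_n$. This test function is admissible: since $e_\tau(t)\in H^2(\D)\cap H^1_0(\D)$ we have $\H e_\tau \in C(I,L^2(\D))$, and because the time basis functions are real-valued $\H$ commutes with $\partial_t$, so $\partial_t\H e_\tau = \H\partial_t e_\tau$ is a piecewise polynomial of degree $q-1$ in time with values in $L^2(\D)$, against which every term of \eqref{erroreq0} is well-defined (exactly as \eqref{erroreq} was extended to $L^2(\D)$-valued test functions). Two structural facts then organise the computation. First, $(\partial_t e_\tau,\partial_t\H e_\tau) = (\partial_t e_\tau,\H\partial_t e_\tau) = \|\partial_t e_\tau\|_\H^2 \in\R$, so the leading term $\Im\int_{I_n}(\partial_t e_\tau,\partial_t\H e_\tau)\d t$ \emph{vanishes}. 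Second, $-\Im\int_{I_n}\i(\H e_\tau,\partial_t\H e_\tau)\d t = -\Re\int_{I_n}(\H e_\tau,\partial_t\H e_\tau)\d t = -\tfrac12\big(\|\H e_\tau^{n+1}\|_{L^2(\D)}^2 - \|\H e_\tau^{n}\|_{L^2(\D)}^2\big)$ by the fundamental theorem of calculus. Isolating this difference leaves $\tfrac12(\|\H e_\tau^{n+1}\|^2 - \|\H e_\tau^{n}\|^2)$ equal, up to signs, to the nonlinear term $\beta\Re\int_{I_n}(f(\I^\Lo_\tau u) - f(\ut_\tau),\partial_t\H e_\tau)\d t$ plus the three consistency terms on the right-hand side of \eqref{erroreq0}.

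The key to closing at the correct powers of $\tau_n$ is that in each of these pairings the left factor lies in $H^1_0(\D)$: the nonlinear differences because $f(0)=0$ preserves the zero trace and $\ut_\tau,\I^\Lo_\tau u\in H^1_0(\D)$, and the data differences $\I^\Lo_\tau\H u - \H u$ and $\partial_t(\I^\Lo_\tau u - u)$ because $u$ and $\H u$ take values in $H^1_0(\D)$ by \ref{A8}. Writing $\partial_t\H e_\tau=\H\partial_t e_\tau$ and using $(\phi,\H\psi)=(\phi,\psi)_\H$ for $\phi\in H^1_0(\D)$ and $\psi\in H^2(\D)\cap H^1_0(\D)$ (the $L^2$-realization of Lemma \ref{elliptic} i)), every pairing turns into $(\,\cdot\,,\partial_t e_\tau)_\H$, which crucially avoids the uncontrollable quantity $\|\H e_\tau\|_{L^2(I_n\times\D)}$. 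Cauchy-Schwarz together with the temporal inverse inequality $\|\partial_t e_\tau\|_{L^2(I_n,\H)}\le C\tau_n^{-1}\|e_\tau\|_{L^2(I_n,\H)}\le C\tau_n^{-1}\|\nabla e_\tau\|_{L^2(I_n\times\D)}$ (using $\|\cdot\|_\H\simeq\|\nabla\cdot\|_{L^2(\D)}$ on $H^1_0(\D)$ from Lemma \ref{elliptic}) then handles three of the four terms. The nonlinear term is bounded by $C\|\nabla e_\tau\|_{L^2(I_n\times\D)}\|\partial_t e_\tau\|_{L^2(I_n,\H)}\le C\tau_n^{-1}\|\nabla e_\tau\|_{L^2(I_n\times\D)}^2$ via the Lipschitz estimates Lemma \ref{cutoff} iii)-iv) (legitimate because $\|\I^\Lo_\tau u\|_{W^{1,\infty}(\D)}$ stays below the cutoff by \eqref{W1infty}); the two $O(\tau_n^{q+1})$ consistency terms $\I^\Lo_\tau\H u-\H u$ and $f(\I^\Lo_\tau u)-f(u)$ give, after \eqref{LagrangeEstimate}, the $\tau_n^{-1}$ factor and Young's inequality, the contribution $C\tau_n^{2q+2}(\|\partial_t^{q+1}\nabla\H u\|_{L^2(I_n\times\D)}^2 + \|\partial_t^{q+1}\nabla u\|_{L^2(I_n\times\D)}^2) + C\tau_n^{-2}\|\nabla e_\tau\|_{L^2(I_n\times\D)}^2$.

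The main obstacle is the time-derivative consistency term $\Im\int_{I_n}(\partial_t\I^\Lo_\tau u - \partial_t u,\partial_t\H e_\tau)\d t$: naively $\partial_t(\I^\Lo_\tau u - u)$ is only $O(\tau_n^q)$, which would yield $\tau_n^{2q}$ and miss the claimed $\tau_n^{2q+2}$. I would remove this loss exactly as the $A^{n,i}$-estimate in Lemma \ref{ABest} does, by passing to the enriched interpolation $\I^\Lop_\tau$ at one extra node. The difference $\partial_t(\I^\Lo_\tau u - \I^\Lop_\tau u)$ is a space-dependent scalar multiple of the time-derivative of the Gauss-Lobatto nodal polynomial $\prod_{j=0}^q(t-t^\Lo_{n,j})$, which is proportional to the degree-$q$ Legendre polynomial on $I_n$ and is therefore $L^2(I_n)$-orthogonal to the degree-$(q-1)$ polynomial $\partial_t\H e_\tau$; hence $\int_{I_n}(\partial_t(\I^\Lo_\tau u-u),\partial_t\H e_\tau)\d t = \int_{I_n}(\partial_t(\I^\Lop_\tau u - u),\partial_t\H e_\tau)\d t$, where the right-hand side is now a genuine $O(\tau_n^{q+1})$ error controlled by $\|\partial_t^{q+2}\nabla u\|$. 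Converting to the $\H$-inner product and using Young's inequality yields $C\tau_n^{2q+2}\|\partial_t^{q+2}\nabla u\|_{L^2(I_n\times\D)}^2 + C\tau_n^{-2}\|\nabla e_\tau\|_{L^2(I_n\times\D)}^2$. Collecting the four contributions, multiplying by two, and choosing $\tau_0$ small enough to absorb the nonlinear $C\tau_n^{-1}$ into $C\tau_n^{-2}$ gives precisely $\|\H e_\tau^{n+1}\|_{L^2(\D)}^2 \le \|\H e_\tau^{n}\|_{L^2(\D)}^2 + C\tau_n^{-2}\|\nabla e_\tau\|_{L^2(I_n\times\D)}^2 + C_n(u)\tau_n^{2q+2}$ with $C_n(u)$ as in Lemma \ref{consistencyH1}. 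Everything apart from the orthogonality/enrichment step is routine Cauchy-Schwarz, the temporal inverse inequality, and the Lipschitz properties of $f$.
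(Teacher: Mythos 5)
Your proposal is correct and follows essentially the same route as the paper: test \eqref{erroreq0} with $v=\partial_t\H e_\tau$, take imaginary parts so that the real leading term drops and the $-\i(\H e_\tau,\partial_t\H e_\tau)$ term telescopes, rewrite the remaining pairings in the $(\cdot,\cdot)_\H$-inner product against $\partial_t e_\tau$, and close with Lemma \ref{cutoff}, \eqref{LagrangeEstimate} and the temporal inverse inequality. Your explicit treatment of the $\partial_t(\I^\Lo_\tau u-u)$ term via the enriched interpolant $\I^{\Lop}_\tau$ and Legendre orthogonality is exactly the mechanism behind the $A^{n,i}$-bound in Lemma \ref{ABest}, which the paper's proof invokes only with the words \emph{``And similarly''} (the appearance of $\partial_t^{q+2}\nabla u$ there confirms this), so you have correctly supplied a detail the paper leaves implicit rather than taken a different path.
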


\begin{proof}
We test in \eqref{erroreq0} with $v = \partial_t \H e_\tau$ and take imaginary parts. Then we obtain
\begin{align} \label{prooflocalH2:eq1}
\begin{split}
	\frac{1}{2} \| \H e_\tau^{n+1} \|_{L^2(\D)}^2 & - \frac{1}{2} \| \H e_\tau^{n} \|_{L^2(\D)}^2 = \Re \int_{I_n} (\H e_\tau, \partial_t \H e_\tau) \d t \\
	& = \beta\, \Re \int_{I_n} ( f(\I^\Lo_\tau u)  - f(\ut_\tau), \partial_t e_\tau)_\H \d t + \Im \int_{I_n} (\partial_t \I^\Lo_\tau u - \partial_t u, \partial_t e_\tau)_\H \d t \\
	& \quad - \Re \int_{I_n} (\I^\Lo_\tau \H u - \H u, \partial_t e_\tau)_\H \d t - \beta\, \Re \int_{I_n} ( f(\I^\Lo_\tau u)  - f(u), \partial_t e_\tau)_\H \d t.
\end{split}
\end{align}
Here we used that $(\partial_t e_\tau(t), \H \partial_t e_\tau(t)) \in \R$ for all $t$. In a similar fashion as in the previous section the terms on the right hand side are estimated using Lemma \ref{cutoff}, Lemma \ref{elliptic} and the interpolation estimate \eqref{LagrangeEstimate},
\begin{align*}
	\Big| \int_{I_n} ( f(\I^\Lo_\tau u)  - f(\ut_\tau), \partial_t e_\tau)_\H \d t \Big| \le C \| \partial_t \nabla e_\tau \|_{L^2(I_n \times \D)}^2,
\end{align*}
and
\begin{align*}
	\Big| \int_{I_n} ( f(\I^\Lo_\tau u)  - f(u), \partial_t e_\tau)_\H \d t \Big| & \le C \int_{I_n} \| \I^\Lo_\tau u - u \|_{H^1(\D)} \| \partial_t \nabla e_\tau \|_{L^2(\D)} \d t \\
	& \le C \tau_n^{2q + 2} \| \partial_t^{q+1} \nabla u\|_{L^2(I_n \times \D)}^2 + C \| \partial_t \nabla e_\tau \|_{L^2(I_n \times \D)}^2.
\end{align*}
And similarly,
\begin{align*}
	\Big| \int_{I_n} ( \partial_t \I^\Lo_\tau u - \partial_t u , \partial_t e_\tau)_\H \d t \Big| \le  C \tau_n^{2q + 2} \| \partial_t^{q+2} \nabla u\|_{L^2(I_n \times \D)}^2 + C \| \partial_t \nabla e_\tau \|_{L^2(I_n \times \D)}^2,
\end{align*}
as well as
\begin{align*}
	\Big| \int_{I_n} ( \I^\Lo_\tau \H u - \H u , \partial_t e_\tau)_\H \d t \Big| \le  C \tau_n^{2q + 2} \| \partial_t^{q+1} \nabla \H u\|_{L^2(I_n \times \D)}^2 + C \| \partial_t \nabla e_\tau \|_{L^2(I_n \times \D)}^2.
\end{align*}
With the inverse estimate $\| \partial_t \nabla e_\tau \|_{L^2(I_n \times \D)} \le C \tau_n^{-1} \| \nabla e_\tau \|_{L^2(I_n \times \D)}$ equation \eqref{prooflocalH2:eq1} now yields
\begin{align*}
	\| \H e_\tau^{n+1} \|_{L^2(\D)}^2 \le \| \H e_\tau^{n+1} \|_{L^2(\D)}^2 + C \tau_n^{-2} \| \nabla e_\tau \|_{L^2(I_n \times \D)}^2 + C_n(u) \tau_n^{2q + 2}.
\end{align*}

\end{proof}

Now we turn back to the error equation \eqref{erroreq} and recall that it holds even for all $\psi \in L^2(\D)$ as we can interpret $(  e_\tau^{n,i}, \psi )_\H = ( \H e_\tau^{n,i} , \psi )$ for $e_\tau^{n,i}\in H^2(\D) \cap H^1_0(\D)$. In contrast to the previous section for the $L^\infty(I,H^1(\D))$-estimates, we test in \eqref{erroreq} with $\psi = \H e_\tau^{n,i} \in L^2(\D)$, summing over $i= 1,\dots,q$ and take the imaginary part to infer the $L^\infty(I,H^2(\D))$-estimates. As we will see, the $L^\infty(I,H^2(\D))$-estimates are non-optimal but sufficient to conclude $L^\infty(I \times \D)$-error bounds of the semi-discrete approximation. This is the main result of this section.

\begin{lemma} \label{semiH2estimate}
Under the assumptions \ref{A1}-\ref{A4}, \ref{A7} and \ref{A8} there is $\tau_0 > 0$ such that for all $0 < \tau < \tau_0$ there hold
\begin{align*}
	\| u - \ut_\tau\|_{L^\infty(I,H^2(\D))} \le C_I(u) \, \tau^{q}
\end{align*}
where $C_I(u)$ is defined as in Lemma \ref{semiH1estimate}. In particular, this implies
\begin{align*}
	\| u - \ut_\tau \|_{L^\infty(I \times \D)} \le C_I(u) \, \tau^{q}.
\end{align*}
\end{lemma}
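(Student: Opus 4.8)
The plan is to work from the error splitting \eqref{errorsplit}, reduce the claimed $H^2$-estimate to a bound on $\| \H e_\tau \|_{L^\infty(I,L^2(\D))}$, and establish the latter by controlling $\H e_\tau$ at the $q+1$ Gauss--Legendre nodes of each subinterval $I_n$. First I would split $u - \ut_\tau = (u - \I^\Lo_\tau u) + e_\tau$ and dispose of the interpolation part by applying \eqref{LagrangeEstimate} with the spatial $H^2$-regularity from \ref{A8}, which gives $\| u - \I^\Lo_\tau u \|_{L^\infty(I,H^2(\D))} \le C \tau^{q+1} \| \partial_t^{q+1} u \|_{L^\infty(I,H^2(\D))}$, a term of higher order than needed. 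For $e_\tau$, since $e_\tau(t) \in H^1_0(\D) \cap H^2(\D)$, part ii) of Lemma \ref{elliptic} yields $\| e_\tau(t) \|_{H^2(\D)} \le C \| \H e_\tau(t) \|_{L^2(\D)}$, so it suffices to bound $\| \H e_\tau \|_{L^\infty(I,L^2(\D))}$. On $I_n$ one has $\H e_\tau(t) = \sum_{j=0}^q \hat{\ell}_{n,j}(t) \, \H e_\tau^{n,j}$ with $\| \hat{\ell}_{n,j} \|_{L^\infty(I_n)} = \| \hat{\ell}_{j} \|_{L^\infty([0,1])}$ bounded uniformly in $n$, so everything reduces to showing $\| \H e_\tau^{n,j} \|_{L^2(\D)} \le C \, C_I(u) \, \tau^q$ for all $j = 0,\dots,q$ and all $n$. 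I would also record at this point that Lemma \ref{semiH1estimate} together with \eqref{LagrangeEstimate} gives the nodal bounds $\| e_\tau^{n,j} \|_{L^2(\D)} \le \| e_\tau \|_{L^\infty(I,H^1(\D))} \le C_I(u) \, \tau^{q+1}$.

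For the endpoint node $j=0$ (i.e.\ $t_n$), the plan is to run the local estimate of Lemma \ref{consistencyH2} as a recursion. Setting $a_n := \| \H e_\tau^n \|_{L^2(\D)}^2$ and inserting the bound $\| \nabla e_\tau \|_{L^2(I_n \times \D)}^2 \le C \tau_n^{2q+3} \sum_{m=0}^n C_m(u)$ already obtained in the proof of Lemma \ref{semiH1estimate}, the $\tau_n^{-2}$-term of Lemma \ref{consistencyH2} becomes $C \tau_n^{2q+1} \sum_{m=0}^n C_m(u)$, so that $a_{n+1} \le a_n + C \tau_n^{2q+1} \sum_{m=0}^n C_m(u) + C_n(u) \tau_n^{2q+2}$. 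Since $e_\tau^0 = 0$ (because $\I^\Lo_\tau u(t_0) = u_0 = \ut_\tau(t_0)$) we have $a_0 = 0$; summing, pulling out the inner sum, and using $\sum_k \tau_k^{2q+1} \le \tau^{2q} \sum_k \tau_k \le T \tau^{2q}$ gives $a_n \le C \tau^{2q} \sum_{m=0}^{N-1} C_m(u)$. As $\big( \sum_m C_m(u) \big)^{1/2}$ is controlled by $C_I(u)$ (the $L^2(I\times\D)$-norms being bounded by the $L^\infty(I,H^1(\D))$-norms up to $\sqrt{T}$), this yields $\| \H e_\tau^{n,0} \|_{L^2(\D)} \le C \, C_I(u) \, \tau^q$ uniformly in $n$.

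For the interior nodes $j = 1,\dots,q$ I would test the error equation \eqref{erroreq} with $\psi = \H e_\tau^{n,i} \in L^2(\D)$ (admissible since $e_\tau^{n,i} \in H^2(\D)$), sum over $i = 1,\dots,q$ and take imaginary parts. Using part i) of Lemma \ref{elliptic}, the term $-\i \tau_n w_i^\GL (e_\tau^{n,i}, \H e_\tau^{n,i})_\H = -\i \tau_n w_i^\GL \| \H e_\tau^{n,i} \|_{L^2(\D)}^2$ is purely imaginary and, after taking imaginary parts, produces $\tau_n \sum_i w_i^\GL \| \H e_\tau^{n,i} \|_{L^2(\D)}^2$ on the left with positive Gauss weights; note that in contrast to Lemma \ref{consistencyH1} this positivity comes for free and Lemma \ref{stability} is not needed. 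The right-hand side is then bounded by: the Lipschitz property of $f$ in $L^2$ (Lemma \ref{cutoff}) for the two nonlinear terms, combined with \eqref{LagrangeEstimate} for the consistency part $f(\I^\Lo_\tau u) - f(u)$; Lemma \ref{ABest} together with the Poincar\'e--Friedrichs inequality (legitimate since $A^{n,i}, B^{n,i} \in H^1_0(\D)$) to control $\| A^{n,i} \|_{L^2(\D)}$ and $\| B^{n,i} \|_{L^2(\D)}$; and the nodal $H^1$-bounds $\| e_\tau^{n,j} \|_{L^2(\D)} \le C_I(u) \tau^{q+1}$ for the $m_{ij}$-terms. Writing $X := \big( \sum_{i=1}^q \| \H e_\tau^{n,i} \|_{L^2(\D)}^2 \big)^{1/2}$, every right-hand contribution has the form $(\,\cdot\,) X$, the dominant coefficient being $C \, C_I(u) \tau^{q+1}$ from the $m_{ij}$-term; hence $c \tau_n X^2 \le \big( C \, C_I(u) \tau^{q+1} + (\text{higher order}) \big) X$, and dividing by $\tau_n X$ (using $\tau_n \ge \rho_0 \tau$) gives $X \le C \, C_I(u) \tau^q$.

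Combining the two node sets yields $\| \H e_\tau^{n,j} \|_{L^2(\D)} \le C \, C_I(u) \tau^q$ for all $j$ and $n$, whence $\| \H e_\tau \|_{L^\infty(I,L^2(\D))} \le C \, C_I(u) \tau^q$ and, by part ii) of Lemma \ref{elliptic}, $\| e_\tau \|_{L^\infty(I,H^2(\D))} \le C \, C_I(u) \tau^q$. Together with the interpolation part this proves the $H^2$-estimate, and the $L^\infty(I \times \D)$-bound then follows from the Sobolev embedding $H^2(\D) \hookrightarrow L^\infty(\D)$, valid in dimension $d \le 3$ by \ref{A1}. I expect the main obstacle to be the $H^2$-stability: unlike the $H^1$-case one cannot exploit the identity \eqref{interpolation-projection} that the interpolation equals the projection, so the recursion must be driven from the more general form \eqref{erroreq0} as in Lemma \ref{consistencyH2}, and the inverse estimates in time — the $\tau_n^{-2}$ entering Lemma \ref{consistencyH2} and the $\tau_n^{-1}$ appearing when dividing in the interior-node estimate — unavoidably cost exactly one power of $\tau$, which is why the bound is the non-optimal $\tau^q$ rather than $\tau^{q+1}$.
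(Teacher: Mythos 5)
Your proposal is correct and follows essentially the same route as the paper: reduce via Lemma \ref{elliptic} ii) to bounding $\| \H e_\tau \|_{L^\infty(I,L^2(\D))}$, control the endpoint values $\| \H e_\tau^n \|_{L^2(\D)}$ by the recursion from Lemma \ref{consistencyH2} fed with the $L^2(I_n,H^1)$-bound from the proof of Lemma \ref{semiH1estimate}, control the interior Gauss--Legendre nodal values by testing \eqref{erroreq} with $\H e_\tau^{n,i}$ and taking imaginary parts (where indeed only the positivity of the Gauss weights, not Lemma \ref{stability}, is needed), and recombine with the inverse estimate in time. The only difference is bookkeeping: you pair the right-hand side terms with $\| \H e_\tau^{n,i} \|_{L^2(\D)}$ and divide by $X$, whereas the paper pairs them in $H^1$ and keeps $X^2$ on the left; both yield the same $\tau^{q}$ rate.
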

Note that the latter statement of the lemma expresses that $ \ut_\tau$ is uniformly bounded in $L^{\infty}$ and that the truncation can be dropped for all sufficiently small values of $\tau$.

\begin{proof}
First we note that $e_\tau = \I^{\Lo}_\tau u - \ut_\tau \in L^\infty(I,H^1_0(\D) \cap H^2(\D))$. Then elliptic regularity theory (cf. \cite{GilbargTrudinger01}), Sobolev embedding and the interpolation \eqref{LagrangeEstimate} imply
\begin{align*}
	\| u - \ut_\tau \|_{L^\infty(I \times \D)} \le C \| u - \ut_\tau \|_{L^\infty(I,H^2(\D))} & \le C \| u - \I^\Lo_\tau u \|_{L^\infty(I,H^2(\D))} + C \| e_\tau \|_{L^\infty(I,H^2(\D))} \\
	& \le C \tau^{q + 1} \| \partial_t^{q + 1} u \|_{L^\infty(I,H^2(\D))} + C \| \H e_\tau \|_{L^\infty(I,L^2(\D))}.
\end{align*}
So it remains to estimate $\| \H e_\tau \|_{L^\infty(I,L^2(\D))}$. Testing the error equation \eqref{erroreq} with $\psi = \H e_\tau^{n,i} \in L^2(\D)$ (which is again admissible), summing over $i = 1,\dots,q$ and taking the imaginary part yields
\begin{align*}
	\tau_n \sum_{i=1}^q w^\GL_i \| \H e_\tau^{n,i} \|_{L^2(\D)}^2 & = \Im \sum_{i = 1}^{q} \sum_{j = 0}^{q} m_{ij} ( e_\tau^{n,j}, e_\tau^{n,i})_\H \\
	& \quad - \beta\, \Re \sum_{i = 1}^q \int_{I_n} \ell_{n,i} (f(\I^\Lo_\tau u) - f(\ut_\tau),e_\tau^{n,i})_\H \d t  \\
	& \quad + \beta\, \Re \sum_{i = 1}^q \int_{I_n} \ell_{n,i} (f(\I^\Lo_\tau u) - f(u),e_\tau^{n,i})_\H \d t \\
	& \quad - \Im \sum_{i=1}^q (A^{n,i} - \i B^{n,i},e_\tau^{n,i})_\H.
\end{align*}
Estimating the terms on the right hand side with Lemma \ref{elliptic} and arguments similar as in the proof of Lemma \ref{semiH1estimate} yields
\begin{align*}
		\tau_n \sum_{i = 1}^q w^\GL_i \| \H e_\tau^{n,i} \|_{L^2(\D)}^2 & \le C\tau_n^{-1} \| \nabla e_\tau \|_{L^2(I_n \times \D)}^2 + C_n(u) \tau_n^{2q+2}
\end{align*}
with $C_n(u)$ from Lemma \ref{consistencyH1}. With Lemma \ref{consistencyH1} and Lemma \ref{semiH1estimate} we have
\begin{align*}
	\tau_n \sum_{i = 1}^q w^\GL_i \| \H e_\tau^{n,i} \|_{L^2(\D)}^2 \le C \| \nabla e_\tau^n \|_{L^2(\D)}^2 + C_n(u) \tau_n^{2q+2} \le C\tau^{2q + 2} \sum_{m = 0}^n C_m(u).
\end{align*}
Now Lemma \ref{normequiv}, Lemma \ref{semiH1estimate}, Lemma \ref{consistencyH2} and the inverse estimate $\| v \|_{L^\infty(I_n)} \le C \tau_n^{-1/2} \| v \|_{L^2(I_n)}$ for $v \in \mathcal{P}^{q}(I_n)$ lead to
\begin{align*}
		\| \H e_\tau \|_{L^\infty(I_n,L^2(\D))}^2 \le C \tau_n^{-1} \| \H e_\tau \|_{L^2(I_n \times \D)}^2 & \le C \| \H e_\tau^n \|_{L^2(\D)}^2 + C \sum_{i = 1}^q \| \H e_\tau^{n,i} \|_{L^2(\D)}^2 \\
		& \le C \tau^{2q} \sum_{m = 0}^n C_m(u). 
\end{align*}
Taking square root and the maximum over $n = 0,\dots,N-1$ proves the claim.
\end{proof}

\sect{Proof of uniform $L^\infty(\D)$-bounds of the fully-discrete scheme}
\label{section:proof-main-result}

In this section we prove our main result, Theorem \ref{mainresult}. In fact, we show the statement of the theorem for the truncated approximation $\ut_{h,\tau}$ from where we infer that the truncated approximation from \eqref{auxproblem} also solves \eqref{cGscheme}, i.e. $\ut_{h,\tau} = u_{h,\tau}$ in $I \times \D$. Hence the uniform $L^\infty$-bounds also hold for $u_{h,\tau}$. The bound for $\ut_{h,\tau}$ is concluded from $L^\infty$-estimates of the error $u - \ut_{h,\tau}$ which is done by splitting the error into
\begin{align} \label{fullerrorequation}
	u - \ut_{h,\tau} = (u - \ut_{\tau}) + (\ut_\tau - P_h \ut_\tau) + (P_h \ut_\tau - \ut_{h,\tau}) = e_\tau + (\ut_\tau - P_h \ut_\tau) + e_{h,\tau}
\end{align}
with $e_{h,\tau} := P_h \ut_{\tau} - \ut_{h,\tau}$ and $e_\tau$ defined as in \eqref{errorsplit}. The first term in \eqref{fullerrorequation} was estimated in the previous section w.r.t. different norms and in particular w.r.t. the $L^\infty(I \times \D)$-norm. The second term in \eqref{fullerrorequation} is easily estimated due to \ref{A7}, so that
\begin{align*}
	\| \ut_\tau - P_h \ut_\tau \|_{L^\infty(I \times \D)} \le C \| \ut_\tau \|_{L^\infty(I,H^2(\D))} \le C \| e_\tau \|_{L^\infty(I,H^2(\D))} + C \| u \|_{L^\infty(I,H^2(\D))}
\end{align*}
which is uniformly bounded for $0 < \tau < \tau_0$ due to Theorem \ref{semiH2estimate}. Therefore we focus in the following on $e_{h,\tau}$. Recall that $P_h$ is the Ritz-projection of $H^1_0(\D)$ onto $S_h$ w.r.t. the sesquilinear form $(\cdot,\cdot)_\H$. Therefore, we have
\begin{align*}
	& \sum_{j = 0}^q m_{ij} (P_h \ut_\tau^{n,j},\psi) - \i \tau_n w_i^\GL (P_h \ut_\tau^{n,i}, \psi )_\H = \sum_{j = 0}^q m_{ij} (P_h \ut_\tau^{n,j} - \ut_\tau^{n,j}, \psi) + \i \beta \int_{I_n} \ell_{n,i} (f(\ut_\tau),\psi) \d t
\end{align*}
for all $i = 1,\dots,q$, $n = 0,\dots,N-1$ and $\psi \in S_h$. Then the error equation for $e_{h,\tau}$ reads as
\begin{eqnarray} \label{erroreqfull}
\lefteqn{ \sum_{j =0}^q m_{ij} (e_{h,\tau}^{n,j}, \psi) - \i \tau_n w_i^\GL (e_{h,\tau}^{n,i}, \psi)_\H } \\
\nonumber&=& \sum_{j = 0}^q m_{ij} (P_h \ut_\tau^{n,j} - \ut_\tau^{n,j}, \psi ) + \i \beta \int_{I_n} \ell_{n,i} (f(\ut_\tau) - f(\ut_{h,\tau}), \psi) \d t
\end{eqnarray}
for all $i = 1,\dots,q$, $n = 0,\dots,N-1$ and $\psi \in S_h$. Here we use the notation $e_{h,\tau}^n := e_{h,\tau}(t_n)$ and $e_{h,\tau}^{n,i} := e_{h,\tau}(t_{n,i}^\GL)$ analogously as in the previous sections. Using the error equation we now show local error estimates for $e_{h,\tau}$ in $L^2(\D)$.

\begin{lemma} \label{consistencyfull}
Under the assumptions \ref{A1}-\ref{A8} there are constants $C, \tau_0, h_0 > 0$ such that for all $0 < \tau_n < \tau_0$, $0 < h < h_0$ and $n = 0,\dots,N-1$ there hold
\begin{align*}
	\| e_{h,\tau} \|_{L^2(I_n \times \D)}^2 \le C \tau_n \| e_{h,\tau}^{n} \|_{L^2(\D)}^2 + C h^4 \tau_n^2 \Big( C_I(u)^2 \tau^{2q-1} + \| \partial_t u \|_{L^2(I_n,H^2(\D))}^2 + \| u \|_{L^2(I_n,H^2(\D))}^2 \Big),
\end{align*}
where $C_I(u)$ is defined as in Lemma \ref{semiH1estimate}.
\end{lemma}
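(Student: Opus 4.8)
The plan is to run the same energy-type argument that established existence in Lemma~\ref{existencefull} (cf.~\eqref{auxproblem2}), now applied to the error equation \eqref{erroreqfull}. Concretely, I would multiply the $i$-th equation by $(s_i^{\GL})^{-1/2}$, introduce the rescaled nodal errors $\mathbf{e}_{h,\tau}^{n,j} := (s_j^{\GL})^{-1/2} e_{h,\tau}^{n,j}$ for $j=1,\dots,q$, test with $\psi = \mathbf{e}_{h,\tau}^{n,i} \in S_h$, sum over $i$ and take real parts. As in that proof the matrix $\mathcal{M}$ turns into $\mathbf{M}$ from \eqref{definition_tilde_m}, so Lemma~\ref{stability} supplies the coercive lower bound $\alpha\sum_{i=1}^q\|\mathbf{e}_{h,\tau}^{n,i}\|_{L^2(\D)}^2$. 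The decisive simplification is that after this pairing the $\H$-term is purely imaginary, since $(e_{h,\tau}^{n,i},e_{h,\tau}^{n,i})_\H$ is real; hence it drops out when taking real parts, and only the coupling term $(s_i^\GL)^{-1/2}m_{i0}(e_{h,\tau}^n,\cdot)$ and the right-hand side of \eqref{erroreqfull} remain.

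The crucial observation for obtaining the sharp $h^4\tau_n^2$ scaling is to rewrite the projection-error term. Since $\ut_\tau|_{I_n}$ is a time-polynomial of degree $q$ and $P_h$ is linear and time-independent, $(P_h-I)\ut_\tau|_{I_n}$ is again a time-polynomial of degree $q$, so it is reproduced exactly by its values at the $q+1$ nodes $t_n = t_{n,0}^\GL,\dots,t_{n,q}^\GL$, and its derivative is reproduced by the corresponding interpolation. This gives
\[
	\sum_{j=0}^q m_{ij}\big(P_h\ut_\tau^{n,j} - \ut_\tau^{n,j},\psi\big) = \int_{I_n}\ell_{n,i}\,\big((P_h-I)\partial_t\ut_\tau,\psi\big)\d t ,
\]
so the raw projection error is replaced by the projection error of $\partial_t\ut_\tau$. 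Using Cauchy--Schwarz in time (with $\|\ell_{n,i}\|_{L^2(I_n)}\le C\tau_n^{1/2}$), \ref{A5} with $k=0,s=2$, and Young's inequality, this term is dominated by $\tfrac{\alpha}{8}\sum_i\|\mathbf{e}_{h,\tau}^{n,i}\|^2 + C\tau_n h^4\|\partial_t\ut_\tau\|_{L^2(I_n,H^2(\D))}^2$. The nonlinear term is treated by the Lipschitz bound Lemma~\ref{cutoff}.iii) together with $\ut_\tau - \ut_{h,\tau} = (I-P_h)\ut_\tau + e_{h,\tau}$, yielding $\tfrac{\alpha}{8}\sum_i\|\mathbf{e}_{h,\tau}^{n,i}\|^2 + C\tau_n\big(\|(I-P_h)\ut_\tau\|_{L^2(I_n\times\D)}^2 + \|e_{h,\tau}\|_{L^2(I_n\times\D)}^2\big)$, while the coupling term contributes $\tfrac{\alpha}{8}\sum_i\|\mathbf{e}_{h,\tau}^{n,i}\|^2 + C\|e_{h,\tau}^n\|^2$.

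After absorbing the three $\tfrac{\alpha}{8}$-contributions, I would invoke Lemma~\ref{normequiv} to pass to $\|e_{h,\tau}\|_{L^2(I_n\times\D)}^2 \le C\tau_n(\|e_{h,\tau}^n\|^2 + \sum_i\|\mathbf{e}_{h,\tau}^{n,i}\|^2)$, absorbing the ensuing $C\tau_n^2\|e_{h,\tau}\|_{L^2(I_n\times\D)}^2$ from the nonlinear term for $\tau_n$ small. It then remains to convert the $\ut_\tau$-norms into data norms. By \ref{A5} and Lemma~\ref{semiH2estimate} one has $\|(I-P_h)\ut_\tau\|_{L^2(I_n\times\D)}^2 \le Ch^4\|\ut_\tau\|_{L^2(I_n,H^2)}^2 \le Ch^4\big(\|u\|_{L^2(I_n,H^2)}^2 + \tau_n C_I(u)^2\tau^{2q}\big)$, which after the outer factor $\tau_n$ produces the $\|u\|_{L^2(I_n,H^2)}^2$-term in the claim.

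The main obstacle, and the step that fixes the exponent $\tau^{2q-1}$, is the bound on $\|\partial_t\ut_\tau\|_{L^2(I_n,H^2)}$. Writing $\partial_t\ut_\tau = \partial_t u - \partial_t(u-\ut_\tau)$ with $u - \ut_\tau = (u - \I_\tau^\Lo u) + e_\tau$, the interpolation contribution is $O(\tau^q)$ in the relevant norm, whereas $\partial_t e_\tau$ must be controlled through a time inverse estimate: since $e_\tau|_{I_n}\in\mathcal{P}^q(I_n)$ one has $\|\partial_t e_\tau\|_{L^2(I_n,H^2)}\le C\tau_n^{-1}\|e_\tau\|_{L^2(I_n,H^2)}$, and combining Lemma~\ref{elliptic}, Lemma~\ref{normequiv} and the bound $\|\H e_\tau\|_{L^\infty(I,L^2(\D))}^2 \le C\,C_I(u)^2\tau^{2q}$ from the proof of Lemma~\ref{semiH2estimate} yields $\|\partial_t e_\tau\|_{L^2(I_n,H^2)}^2 \le C\,C_I(u)^2\tau^{2q-1}$. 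Hence $\|\partial_t\ut_\tau\|_{L^2(I_n,H^2)}^2 \le C\|\partial_t u\|_{L^2(I_n,H^2)}^2 + C\,C_I(u)^2\tau^{2q-1}$, and multiplying the assembled estimate by $\tau_n$ gives precisely the asserted bound (using $\tau^{2q}\le\tau^{2q-1}$ for the subdominant terms). The delicate point is exactly this loss of one power of $\tau_n$ through the inverse estimate on $\partial_t e_\tau$, which forces $\tau^{2q-1}$ rather than $\tau^{2q}$; everything else is a careful but routine bookkeeping of the quadrature and projection constants.
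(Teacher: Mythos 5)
Your proof is correct and follows the paper's overall skeleton: the rescaling by $(s_i^\GL)^{-1/2}$, testing with the nodal errors, the coercivity from Lemma \ref{stability}, the passage to $\| e_{h,\tau}\|_{L^2(I_n\times\D)}$ via Lemma \ref{normequiv}, and the splitting $\ut_\tau-\ut_{h,\tau}=(I-P_h)\ut_\tau+e_{h,\tau}$ for the nonlinear term are all exactly what the paper does. Where you genuinely diverge is in the decisive projection-error term. The paper also starts from $\sum_j m_{ij}(P_h\ut_\tau^{n,j}-\ut_\tau^{n,j})=\int_{I_n}\ell_{n,i}\,\partial_t\eta\d t$ with $\eta=(P_h-I)\ut_\tau$, but then integrates by parts in time, invokes the exactness of the Gauss--Lobatto rule to rewrite the result as a telescoping sum $\sum_j\beta_{ij}\bigl(\eta(t^\Lo_{n,j})-\eta(t^\Lo_{n,j-1})\bigr)$, and bounds each difference by $h^2$ times $|\ut_\tau(t^\Lo_{n,j})-\ut_\tau(t^\Lo_{n,j-1})|_{H^2(\D)}$, controlled through the pointwise bound $|\ut_\tau-u|_{H^2(\D)}\le C_I(u)\tau^q$ of Lemma \ref{semiH2estimate} plus $\int_{I_n}|\partial_t u|_{H^2(\D)}\d t$; the exponent $\tau^{2q-1}$ then emerges when $h^2\tau_n^{q}$ is normalized by $\tau_n^{1/2}$ before squaring. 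You instead keep the term as $\int_{I_n}\ell_{n,i}\bigl((P_h-I)\partial_t\ut_\tau,\psi\bigr)\d t$ and pay for $\partial_t\ut_\tau$ through a time inverse estimate on $\partial_t e_\tau$, which produces the same $\tau^{2q-1}$ via $\|\partial_t e_\tau\|_{L^2(I_n,H^2(\D))}\le C\tau_n^{-1/2}C_I(u)\tau^{q}$. Both routes are valid and land on identical bounds: the paper's version avoids any inverse estimate in time (at the cost of constructing the coefficients $\beta_{ij}$ from the Lobatto weights), while yours avoids that combinatorial step but leans on the additional observation that $\|e_\tau\|_{L^\infty(I,H^2(\D))}\lesssim C_I(u)\tau^{q}$, which is indeed available from Lemma \ref{elliptic} ii) together with Lemma \ref{semiH2estimate}, and on the (standard, but worth stating) inverse inequality for $H^2$-valued time polynomials of degree $q$.
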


\begin{proof}
We define $\mathbf{e}_{h,\tau}^{n,j} = (s_j^\GL)^{-\frac{1}{2}} e_{h,\tau}^{n,j}$, $j = 1,\dots,q$. Then testing the error equation \eqref{erroreqfull} with $\psi = (s_i^\GL)^{-\frac{1}{2}} \mathbf{e}_{h,\tau}^{n,i}$, summing over $i = 1,\dots,q$ and taking real part gives
\begin{align} \label{proofconsitencyfull:eq1}
\begin{split}
	& \Re \sum_{i,j = 1}^q \mathbf{m}_{ij} (\mathbf{e}_{h,\tau}^{n,j}, \mathbf{e}_{h,\tau}^{n,i}) =  \Re \sum_{i = 1}^q \sum_{j=0}^q (s_i^\GL)^{-\frac{1}{2}} m_{ij} (P_h \ut_\tau^{n,j} - \ut_\tau^{n,j},\mathbf{e}_{h,\tau}^{n,i} ) \\
	& \quad - \Im \beta \sum_{i = 1}^q \int_{I_n} (s_i^\GL)^{-\frac{1}{2}} \ell_{ni} (f(\ut_\tau) - f(\ut_{h,\tau}), \mathbf{e}_{h,\tau}^{n,i} ) \d t - \Re \sum_{i = 1}^q (s_i^\GL)^{-\frac{1}{2}} m_{i0} (e_{h,\tau}^n,\mathbf{e}_{h,\tau}^{n,i}). \\
\end{split}
\end{align}
For the left hand side in \eqref{proofconsitencyfull:eq1} we find with Lemma \ref{elliptic} and Lemma \ref{stability} a constant $\alpha > 0$ such that
\begin{align*}
	\alpha \sum_{i = 1}^q \| \mathbf{e}_{h,\tau}^{n,i} \|_{L^2(\D)}^2 \le \Re \sum_{i,j = 1}^q \mathbf{m}_{ij} (\mathbf{e}_{h,\tau}^{n,j}, \mathbf{e}_{h,\tau}^{n,i}).
\end{align*}
Next we consider the right hand side of \eqref{proofconsitencyfull:eq1}. Let us write $\eta : = P_h \ut_\tau - \ut_\tau$ and note that by the exactness of the Gauss-Lobatto quadrature we have
\begin{align*}
	\ell_{n,i}(t_{n+1}) - \sum_{j = 0}^{q} w_{n,j}^\Lo \, \ell'_{n,i}(t^\Lo_{n,j}) - \ell_{n,i}(t_{n}) = \ell_{n,i}(t_{n+1}) - \int_{I_n} \ell_{n,i}'(t) \d t - \ell_{n,i}(t_{n}) = 0.
\end{align*}
Then it is straightforward to construct constants $\beta_{ij} \in \R$ (depending on the Gauss-Lobatto quadrature points and weights) such that 
\begin{align*}
	\ell_{n,i}(t_{n+1}) \eta(t_{n+1}) - \sum_{j = 0}^{q} w_{n,j}^\Lo \, \ell'_{n,i}(t^\Lo_{n,j}) \, \eta(t^\Lo_{n,j}) - \ell_{n,i}(t_n) \eta(t_n)
		& = \sum_{j = 1}^q \beta_{ij} (\eta(t^\Lo_{n,j}) - \eta(t^\Lo_{n,j-1})).
\end{align*}
Now using again the exactness of the Gauss-Lobatto quadrature we have
\begin{align*}
	 \sum_{j=0}^q m_{ij} (P_h \ut_\tau^{n,j} - \ut_\tau^{n,j}) & = \int_{I_n} \ell_{n,i}(t) \, \partial_t \eta(t) \d t \\
		& = \ell_{n,i}(t_{n+1}) \eta(t_{n+1}) - \int_{I_n} \ell'_{n,i}(t) \, \eta(t) \d t - \ell_{n,i}(t_n) \eta(t_n) \\
		& = \ell_{n,i}(t_{n+1}) \eta(t_{n+1}) - \sum_{j = 0}^{q} w_{n,j}^\Lo \, \ell'_{n,i}(t^\Lo_{n,j}) \, \eta(t^\Lo_{n,j}) - \ell_{n,i}(t_n) \eta(t_n) \\
		& = \sum_{j = 1}^q \beta_{ij} (\eta(t^\Lo_{n,j}) - \eta(t^\Lo_{n,j-1})) = \sum_{j = 1}^q \beta_{ij} (P_h - I)(\ut_\tau(t^\Lo_{n,j}) - \ut_\tau(t^\Lo_{n,j-1})).
\end{align*}
So using \ref{A5} and Lemma \ref{semiH2estimate} we estimate
\begin{align*}
		& \sum_{j = 0}^q \| (P_h-I)(\ut_\tau(t^\Lo_{n,j}) - \ut_\tau(t^\Lo_{n,j-1})) \|_{L^2(\D)} \le Ch^2 \sum_{j = 0}^q | \ut_\tau(t^\Lo_{n,j}) - \ut_\tau(t^\Lo_{n,j-1}) |_{H^2(\D)} \\
		& \le Ch^2 \sum_{j = 0}^q \Big( |\ut_\tau(t^\Lo_{n,j}) - u(t^\Lo_{n,j})|_{H^2(\D)} + |\ut_\tau(t^\Lo_{n,j-1}) - u(t^\Lo_{n,j-1})|_{H^2(\D)} + \int_{t^\Lo_{n,j-1}}^{t^\Lo_{n,j}} |\partial_t u|_{H^2(\D)} \d t \Big) \\
		& \le C_I(u) h^2 \tau_n^{q} + Ch^2 \int_{I_n} \|\partial_t u \|_{H^2(\D)} \d t \le C_I(u) h^2 \tau_n^{q} + Ch^2 \tau_n^{1/2} \| \partial_t u \|_{L^2(I_n,H^2(\D))}.
\end{align*}
This yields for the first term of the right hand side in \eqref{proofconsitencyfull:eq1} the estimate
\begin{align} \label{proofconsitencyfull:eq2}
\begin{split}
	& \Big| \sum_{i = 1}^q \sum_{j=0}^q (s_i^\GL)^{-\frac{1}{2}} m_{ij} (P_h \ut_\tau^{n,j} - \ut_\tau^{n,j},\mathbf{e}_{h,\tau}^{n,i} ) \Big| \le C \sum_{i = 1}^q \sum_{j = 0}^q \| P_h \ut_\tau^{n,j} - \ut_\tau^{n,j} \|_{L^2(\D)} \| \mathbf{e}_{h,\tau}^{n,i} \|_{L^2(\D)} \\
	& \le \Big( C_I(u) h^2 \tau_n^{q} + Ch^2 \tau_n^{1/2} \| \partial_t u \|_{L^2(I_n,H^2(\D))} \Big) \Big( \sum_{i = 1}^q \| \nabla \mathbf{e}_{h,\tau}^{n,i} \|_{L^2(\D)}^2 \Big)^{1/2}.
\end{split}
\end{align}
Next we estimate the second term in \eqref{proofconsitencyfull:eq1} using Lemma \ref{cutoff} by
\begin{align} \label{proofconsitencyfull:eq3}
\begin{split}
	& \Big| \beta \sum_{i = 1}^q \int_{I_n} (s_i^\GL)^{-\frac{1}{2}} \ell_{ni} (f(\ut_\tau) - f(\ut_{h,\tau}), \mathbf{e}_{h,\tau}^{n,i} ) \d t \Big| \\
	& \le C\int_{I_n} \| \ut_\tau - \ut_{h,\tau} \|_{L^2(\D)} \d t \left( \sum_{i =1}^q  \| \mathbf{e}_{h,\tau}^{n,i} \|_{L^2(\D)} \right) \\
	& \le C \int_{I_n} \| \ut_\tau - P_h \ut_\tau \|_{L^2(\D)} + \| e_{h,\tau} \|_{L^2(\D)} \d t \, \left(\sum_{i =1}^q  \| \mathbf{e}_{h,\tau}^{n,i} \|_{L^2(\D)}  \right)\\
	& \le C \Big( h^2 \int_{I_n} | \ut_\tau |_{H^2(\D)} \d t + \tau_n^{1/2} \| e_{h,\tau} \|_{L^2(I_n \times \D)} \Big) \left( \sum_{i =1}^q  \| \mathbf{e}_{h,\tau}^{n,i} \|_{L^2(\D)} \right) \\
	& \le C \Big( h^2 \int_{I_n} | \ut_\tau - u |_{H^2(\D)} + | u |_{H^2(\D)} \d t + \tau_n^{1/2} \| e_{h,\tau} \|_{L^2(I_n \times \D)} \Big) \left( \sum_{i =1}^q  \| \mathbf{e}_{h,\tau}^{n,i} \|_{L^2(\D)} \right) \\
	& \le C \tau_n^{1/2} \Big( C_I(u) h^2 \tau^{q+\frac{1}{2}} + h^2 \| u \|_{L^2(I_n,H^2(\D))} + \| e_{h,\tau} \|_{L^2(I_n \times \D)} \Big) \Big( \sum_{i =1}^q  \| \mathbf{e}_{h,\tau}^{n,i} \|_{L^2(\D)}^2 \Big)^{1/2}.
\end{split}
\end{align}
The third term in \eqref{proofconsitencyfull:eq1} is estimated by
\begin{align*}
	\Big| \sum_{i = 1}^q (s_i^\GL)^{-\frac{1}{2}} m_{i0} (e_{h,\tau}^n,\mathbf{e}_{h,\tau}^{n,i}) \Big| \le C \| e_{h,\tau}^n \|_{L^2(\D)} \Big( \sum_{i = 1}^q \| \mathbf{e}_{h,\tau}^{n,i} \|_{L^2(\D)} \Big)^{1/2}.
\end{align*}
Summarizing we have shown
\begin{align*}
	\Big( \sum_{i = 1}^q \| \mathbf{e}_{h,\tau}^{n,i} \|_{L^2(\D)}^2 \Big)^{1/2} & \le C\| e_{h,\tau}^n \|_{L^2(\D)} + C \tau_n^{1/2} \| e_{h,\tau} \|_{L^2(I_n \times \D)} \\
	& \quad + Ch^2 \tau_n^{1/2} \Big( C_I(u) \tau^{q - \frac{1}{2}} + \| \partial_t u \|_{L^2(I_n,H^2(\D))} + \|  u \|_{L^2(I_n,H^2(\D))} \Big).
\end{align*}
Taking Lemma \ref{normequiv} into account gives
\begin{align*}
	\| e_{h,\tau} \|_{L^2(I_n \times \D)}^2 & \, \le \, C \tau_n \sum_{i = 1}^q \| e_{h,\tau}^{n,i} \|_{L^2(\D)}^2  \, \le \, C \tau_n \sum_{i = 1}^q \| \mathbf{e}_{h,\tau}^{n,i} \|_{L^2(\D)}^2 \\
	&\,  \le \, C \tau_n \| e_{h,\tau}^n \|_{L^2(\D)}^2 + C\tau_n^2 \| e_{h,\tau} \|_{L^2(I_n \times \D)}^2 \\
	& \quad + C h^4 \tau_n^2 \Big( C_I(u)^2 \tau^{2q-1} + \| \partial_t u \|_{L^2(I_n,H^2(\D))}^2 + \| u \|_{L^2(I_n,H^2(\D))}^2 \Big).
\end{align*}
Hence, taking $\tau$ sufficiently small yields the claim.
\end{proof}

The result of the previous lemma is now used to show global estimates for the error $e_{h,\tau}$.

\begin{lemma} \label{fullestimate}
Under the assumptions \ref{A1}-\ref{A8} there are constants $C,\tau_0,h_0 > 0$ such that for all $0 < \tau < \tau_n$ and $0 < h < h_0 $ there hold
\begin{align*}
	\| e_{h,\tau} \|_{L^\infty(I,L^2(\D))} \le C h^2 \Big( C_I(u) \, \tau^{q-1} + \| u \|_{L^\infty(I,H^2(\D))} + \| \partial_t u\|_{L^\infty(I,H^2(\D))} \Big)
\end{align*}
where $C_I(u)$ is defined as in Lemma \ref{semiH1estimate}.
\end{lemma}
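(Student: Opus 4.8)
The plan is to turn the local bound of Lemma~\ref{consistencyfull} into a recursion for the nodal values $\|e_{h,\tau}^n\|_{L^2(\D)}$ and then close it with a discrete Gronwall inequality. Since $e_{h,\tau}^{n,i}=P_h\ut_\tau^{n,i}-\ut_{h,\tau}^{n,i}\in S_h$, it is an admissible test function in the fully-discrete error equation \eqref{erroreqfull}. First I would test \eqref{erroreqfull} with $\psi=e_{h,\tau}^{n,i}$, sum over $i=1,\dots,q$ and take the real part. Exactly as in the derivation of \eqref{relation_mij} --- using that $\sum_{i=1}^q\ell_{n,i}\,e_{h,\tau}^{n,i}$ is the Gauss--Legendre interpolant of $e_{h,\tau}$ and that this interpolant equals the $L^2(I_n)$-projection onto $\mathcal{P}^{q-1}(I_n)$ by \eqref{interpolation-projection} --- the leading term telescopes into $\Re\sum_{i,j}m_{ij}(e_{h,\tau}^{n,j},e_{h,\tau}^{n,i})=\tfrac12\|e_{h,\tau}^{n+1}\|_{L^2(\D)}^2-\tfrac12\|e_{h,\tau}^{n}\|_{L^2(\D)}^2$, while the term $-\i\tau_n w_i^\GL(e_{h,\tau}^{n,i},e_{h,\tau}^{n,i})_\H$ has vanishing real part since $(e_{h,\tau}^{n,i},e_{h,\tau}^{n,i})_\H=\|e_{h,\tau}^{n,i}\|_\H^2$ is real. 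This leaves an energy identity whose right-hand side consists of the projection-defect term and the nonlinear-difference term (the endpoint carryover is now folded into the telescoped left-hand side).

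Next I would bound those two source terms just as in \eqref{proofconsitencyfull:eq2} and \eqref{proofconsitencyfull:eq3}, the only change being that the test function is now $e_{h,\tau}^{n,i}$ rather than the rescaled $\mathbf{e}_{h,\tau}^{n,i}$. For the projection defect I would reuse the telescoping representation $\sum_{j}m_{ij}(P_h\ut_\tau^{n,j}-\ut_\tau^{n,j})=\sum_{j}\beta_{ij}(P_h-I)(\ut_\tau(t^\Lo_{n,j})-\ut_\tau(t^\Lo_{n,j-1}))$ together with \ref{A5} and the $H^2$-bound of Lemma~\ref{semiH2estimate}; for the nonlinear term I would use the Lipschitz property of Lemma~\ref{cutoff} and the splitting $\ut_\tau-\ut_{h,\tau}=(\ut_\tau-P_h\ut_\tau)+e_{h,\tau}$, controlling $\ut_\tau-P_h\ut_\tau$ by \ref{A5} and Lemma~\ref{semiH2estimate}. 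All resulting source coefficients carry at least a factor $\tau_n^{1/2}$, so that each product with $(\sum_i\|e_{h,\tau}^{n,i}\|_{L^2(\D)}^2)^{1/2}$ is amenable to Young's inequality.

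To convert this into a genuine recursion I would pass from the interior quadrature-node values to the endpoint value by combining Lemma~\ref{normequiv} with Lemma~\ref{consistencyfull}, which yields $\sum_{i=1}^q\|e_{h,\tau}^{n,i}\|_{L^2(\D)}^2\le C\|e_{h,\tau}^n\|_{L^2(\D)}^2+Ch^4\tau_n\big(C_I(u)^2\tau^{2q-1}+\|\partial_t u\|_{L^2(I_n,H^2(\D))}^2+\|u\|_{L^2(I_n,H^2(\D))}^2\big)$, and likewise control $\|e_{h,\tau}\|_{L^2(I_n\times\D)}$. Substituting these into the energy identity and using the smallness of $\tau$ to absorb the self-referential contributions produces a recursion $\|e_{h,\tau}^{n+1}\|_{L^2(\D)}^2\le(1+C\tau_n)\|e_{h,\tau}^n\|_{L^2(\D)}^2+R_n$ with a pure data remainder $R_n$ whose summation over $n$, which loses one power of $\tau$ on the $\tau_n^{2q-1}$ contribution, is of order $h^4\big(C_I(u)^2\tau^{2q-2}+\|\partial_t u\|_{L^\infty(I,H^2(\D))}^2+\|u\|_{L^\infty(I,H^2(\D))}^2\big)$. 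Since $e_{h,\tau}^0=P_h u_0-P_h u_0=0$, the discrete Gronwall lemma gives the bound on $\max_n\|e_{h,\tau}^n\|_{L^2(\D)}$, and the temporal inverse estimate $\|v\|_{L^\infty(I_n)}\le C\tau_n^{-1/2}\|v\|_{L^2(I_n)}$ for $v\in\mathcal{P}^q(I_n)$ together with Lemma~\ref{consistencyfull} upgrades it to the stated $L^\infty(I,L^2(\D))$-norm after taking square roots.

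The main obstacle is closing the recursion without a spurious growth factor: the interior node values $e_{h,\tau}^{n,i}$, the endpoint $e_{h,\tau}^{n+1}$ and the space-time norm $\|e_{h,\tau}\|_{L^2(I_n\times\D)}$ are all comparable to $\|e_{h,\tau}^n\|_{L^2(\D)}$ and appear on both sides, so the Young bookkeeping must be arranged so that every self-referential term is either absorbed or enters with a $\tau_n$-weight, leaving a clean $(1+C\tau_n)$ factor; here the fact that all consistency coefficients carry a $\tau_n^{1/2}$ is essential. The decisive structural point, and the reason the mesh coupling disappears, is that the entire argument stays in the spatial $L^2(\D)$-norm: no spatial inverse estimate is invoked, and the single power $h^2$ is produced solely by \ref{A5} applied to $\ut_\tau$, whose $H^2$-regularity is furnished by Lemma~\ref{semiH2estimate}.
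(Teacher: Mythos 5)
Your proposal follows essentially the same route as the paper's proof: testing \eqref{erroreqfull} with $\psi=e_{h,\tau}^{n,i}$ to obtain the telescoping energy identity via \eqref{relation_mij}, reusing the bounds \eqref{proofconsitencyfull:eq2}--\eqref{proofconsitencyfull:eq3}, closing the recursion with the local estimate of Lemma~\ref{consistencyfull} and a discrete Gronwall argument, and upgrading to $L^\infty(I,L^2(\D))$ by the temporal inverse estimate. All the key mechanisms (the interpolation--projection coincidence, the Gauss--Lobatto telescoping of the projection defect, the absence of any spatial inverse estimate) are correctly identified, so the argument is sound and matches the paper.
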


\begin{proof}
Testing the error equation \eqref{erroreqfull} with $\psi = e_{h,\tau}^{n,i}$, summing over $i = 1,\dots,q$ and taking real part gives analogously to \eqref{relation_mij}:
\begin{align} \label{prooffullesimate:eq1}
\begin{split}
	& \frac{1}{2} \| e_{h,\tau}^{n+1} \|_{L^2(\D)}^2 - \frac{1}{2} \| e_{h,\tau}^{n} \|_{L^2(\D)}^2 = \Re \sum_{i = 1}^q \sum_{j = 0}^q m_{ij} (e_{h,\tau}^{n,j}, e_{h,\tau}^{n,i}) \\
	& = \Re \sum_{i = 1}^q \sum_{j = 0}^q m_{ij} (P_h \ut_\tau^{n,j} - \ut_\tau^{n,j}, e_{h,\tau}^{n,i}) - \beta\, \Im \sum_{i=1}^q \int_{I_n} \ell_{n,i} (f(\ut_\tau) - f(\ut_{h,\tau}), e_{h,\tau}^{n,i}) \d t.
\end{split}
\end{align}
As in \eqref{proofconsitencyfull:eq2} and \eqref{proofconsitencyfull:eq3} we have
\begin{align*}
	\Big| \sum_{i = 1}^q \sum_{j = 0}^q m_{ij} (P_h \ut_\tau^{n,j} - \ut_\tau^{n,j}, e_{h,\tau}^{n,i}) \Big| & \le \Big( C_I(u) h^2 \tau^{q - \frac{1}{2}} + Ch^2 \| \partial_t u \|_{L^2(I_n,H^2(\D))} \Big) \Big( \tau_n \sum_{i = 1}^q \| e_{h,\tau}^{n,i} \|_{L^2(\D)}^2 \Big)^{1/2} \\
	& \le C_I(u)^2 h^4 \tau^{2q-1} + Ch^4 \| \partial_t u \|_{L^2(I_n,H^2(\D))}^2 + C \| e_{h,\tau} \|_{L^2(I_n \times \D)}^2
\end{align*}
with $C_I(u)$ from Lemma \ref{semiH1estimate} and
\begin{align*}
	& \Big| \beta \sum_{i=1}^q \int_{I_n} \ell_{n,i} (f(\ut_\tau) - f(\ut_{h,\tau}), e_{h,\tau}^{n,i}) \d t  \Big| \\
	& \le \Big( C_I(u) h^2 \tau^{q} + Ch^2 \| u \|_{L^2(I_n,H^2(\D))} + C \| e_{h,\tau} \|_{L^2(I_n \times \D)} \Big) \Big( \tau_n \sum_{i =1}^q  \| e_{h,\tau}^{n,i} \|_{L^2(\D)}^2 \Big)^{1/2} \\
	& \le C_I(u)^2 h^4 \tau^{2q} + Ch^4 \| u \|_{L^2(I_n,H^2(\D))}^2 + C \| e_{h,\tau} \|_{L^2(I_n \times \D)}^2.
\end{align*}
Hence \eqref{prooffullesimate:eq1} implies with the local error estimate from Lemma \ref{consistencyfull}
\begin{align*}
	\| e_{h,\tau}^{n+1} \|_{L^2(\D)}^2 & \le \| e_{h,\tau}^{n} \|_{L^2(\D)}^2 + C \| e_{h,\tau} \|_{L^2(I_n \times \D)}^2 \\
	& \quad + Ch^4 \Big( C_I(u)^2 \tau^{2q-1} + \| u \|_{L^2(I_n,H^2(\D))}^2 + \| \partial_t u \|_{L^2(I_n,H^2(\D))}^2 \Big) \\
	& \le (1 + C \tau_n) \| e_{h,\tau}^n \|_{L^2(\D)} + Ch^4 \Big( C_I(u)^2 \tau^{2q-1} + \| u \|_{L^2(I_n,H^2(\D))}^2 + \| \partial_t u \|_{L^2(I_n,H^2(\D))}^2 \Big).
\end{align*}
This recursion yields
\begin{align*}
	\| e_{h,\tau}^n \|_{L^2(\D)}^2 & \le Ch^4 (1 + C\tau)^n \sum_{m = 0}^{n-1} \Big( C_I(u)^2 \tau^{2q-1} + \| u \|_{L^2(I_m,H^2(\D))}^2 + \| \partial_t u \|_{L^2(I_m,H^2(\D))}^2 \Big) \\
	& \le C h^4 e^{C T} \Big( C_I(u)^2 \tau^{2q-2} + \| u \|_{L^2(I,H^2(\D))}^2 + \| \partial_t u \|_{L^2(I,H^2(\D))}^2 \Big).
\end{align*}
So by Lemma \ref{normequiv}, Lemma \ref{consistencyfull} and the inverse estimate $\| v \|_{L^\infty(I_n)} \le C \tau_n^{-1/2} \| v \|_{L^2(I_n)}$ for $v \in \mathcal{P}^{q}(I_n)$ we now obtain
\begin{align*}
	\| e_{h,\tau} \|_{L^\infty(I_n,L^2(\D))} & \le C \tau_n^{-1/2} \| e_{h,\tau} \|_{L^2(I_n \times \D)} \\
	& \le C \| e_{h,\tau}^n \|_{L^2(\D)} + Ch^2 \Big( C_I(u) \tau^{q} + \| u \|_{L^2(I_n,H^2(\D))} + \| \partial_t u \|_{L^2(I_n,H^2(\D))} \Big) \\
	& \le C h^2 \Big( C_I(u) \tau^{q-1} + \| u \|_{L^\infty(I,H^2(\D))} + \| \partial_t u \|_{L^\infty(I,H^2(\D))} \Big)
\end{align*}
which yields the claim by taking the maximum over $n = 0,\dots,N-1$.
\end{proof}

Finally we can prove our main result.

\begin{proof}[Proof of Theorem \ref{mainresult}]
We prove the uniform $L^\infty(\D)$-bound for the truncated approximation $\ut_{h,\tau}$. In particular, we show
\begin{align} \label{boundtrucated}
	\| \ut_{h,\tau} \|_{L^\infty(I \times \D)} < M.
\end{align}
Then by Lemma \ref{cutoff} the truncated approximation $\ut_{h,\tau}$ also solves \eqref{cGscheme}, i.e. $\ut_{h,\tau} = u_{h,\tau}$, which then proves the assertion. In order to show \eqref{boundtrucated}, we bound the error $u - \ut_{h,\tau}$ by three terms:
\begin{align*}
	\| u - \ut_{h,\tau} \|_{L^\infty(I \times \D)} \le \| u - \ut_{\tau} \|_{L^\infty(I \times \D)} + \| \ut_\tau - P_h \ut_{\tau} \|_{L^\infty(I \times \D)} + \| e_{h,\tau} \|_{L^\infty(I \times \D)}.
\end{align*}
Lemma \ref{semiH2estimate} shows for the first term the estimate
\begin{align*}
	\| u - \ut_{\tau} \|_{L^\infty(I \times \D)} \le C_I(u) \tau^{q}.
\end{align*}
The second term is bounded due to \ref{A7}, \ref{A8} and Lemma \ref{semiH2estimate} by
\begin{align*}
	\| \ut_\tau - P_h \ut_{\tau} \|_{L^\infty(I \times \D)} \le C_\infty \| \ut_\tau \|_{L^\infty(I,H^2(\D))} & \le C_\infty \| u \|_{L^\infty(I,H^2(\D))} + C_\infty \| u - \ut_\tau \|_{L^\infty(I,H^2(\D))} \\
	& \le C_\infty \| u \|_{L^\infty(I,H^2(\D))} + C_I(u) \tau^{q}.
\end{align*}
Using the inverse inequality \ref{A6} we obtain with Lemma \ref{fullestimate} 
\begin{align*}
	\| e_{h,\tau} \|_{L^\infty(I \times \D)} & \le C h^{-\frac{d}{2}} \| e_{h,\tau} \|_{L^\infty(I,L^2(\D))} \\
	& \le C h^{2 - \frac{d}{2}} \Big( C_I(u) \tau^{q-1} + \| u \|_{L^\infty(I,H^2(\D))} + \| \partial_t u \|_{L^\infty(I,H^2(\D))} \Big).
\end{align*}
This yields the bound
\begin{align*}
 \| \ut_{h,\tau} \|_{L^\infty(I \times \D)} & \le \| u \|_{L^\infty(I \times \D)} + \| u - \ut_{h,\tau} \|_{L^\infty(I \times \D)} \\
 & \le \| u \|_{L^\infty(I \times \D)} + C_\infty \| u \|_{L^\infty(I,H^2(\D))} + 2 C_I(u) \tau^{q} \\
 & \quad + C h^{2 - \frac{d}{2}} \Big( C_I(u) \tau^{q-1} + \| u \|_{L^\infty(I,H^2(\D))} + \| \partial_t u \|_{L^\infty(I,H^2(\D))} \Big) \\
 & < \| u \|_{L^\infty(I \times \D)} + C_\infty \| u \|_{L^\infty(I,H^2(\D))} + 1 = M
\end{align*}
if $d \le 3$ and for sufficiently small $\tau$ and $h$. Hence \eqref{boundtrucated} holds and the assertion is proved. 
\end{proof}

We close this section with a uniqueness result for the fully-discrete approximation satisfying \eqref{cGscheme}. In particular, a numerical solution satisfying the uniform bound from Theorem \ref{mainresult} is unique. More precise, the family of solutions $u_{h,\tau}$ for $0 < \tau < \tau_0$, $0 < h < h_0$ satisfying the uniform bound from Theorem \ref{mainresult} is unique and any other family of solutions of \eqref{cGscheme} must blowup on $I$ w.r.t. the $L^\infty(\D)$-norm as $\tau \rightarrow 0$. This is the result of the following lemma and its proof is given in the appendix.

\begin{lemma} \label{uniqueness}
Let the assumptions \ref{A1}-\ref{A7} be fulfilled and $u_0 \in H^1_0(\D)$. Then for every $M_0 > 0$ there is $\tau_0 > 0$ such that the following statement holds true: If $u_1, u_2 \in V_q$ are two bounded solutions of the fully-discrete problem \eqref{cGscheme} for some $0 < \tau < \tau_0$, so that $\| u_i \|_{L^\infty(I \times \D)} \le M_0$, $i = 1,2$, then $u_1 = u_2$.
\end{lemma}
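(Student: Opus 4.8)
The plan is to show that the difference $e := u_1 - u_2$ vanishes identically, proceeding by induction over the time intervals $I_n$. The two ingredients are the local coercivity of the time-stepping scheme (Lemma \ref{stability}) and the fact that the uniform bound $\|u_i\|_{L^\infty(I\times\D)}\le M_0$ turns the merely locally Lipschitz cubic into a globally Lipschitz one on the relevant range. Writing $g(z)=|z|^2z$, the map $g$ is $C^1$ and hence Lipschitz on $\{|z|\le M_0\}$ with a constant $L=L(M_0)$ of order $M_0^2$; since $|u_1|,|u_2|\le M_0$ almost everywhere, this gives the pointwise bound $|g(u_1)-g(u_2)|\le L\,|e|$ and therefore $\|g(u_1(t))-g(u_2(t))\|_{L^2(\D)}\le L\,\|e(t)\|_{L^2(\D)}$ for almost every $t$. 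Note that neither the cutoff $f$ from Lemma \ref{cutoff} nor assumption \ref{A8} is needed here, as we only compare two discrete solutions.

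First I would set up the error equation. Subtracting the two instances of \eqref{cGscheme} yields $\int_{I_n}(\i\partial_t e,v)-(e,v)_\H-\beta(g(u_1)-g(u_2),v)\d t=0$ for all $v\in V_{q-1}$, together with the jump conditions $e^{n+}=e(t_n)$. Testing against $v(t)=\ell_{n,i}(t)\psi$ exactly as in the derivation of \eqref{cGscheme2} and writing $e^{n,j}=e(t^\GL_{n,j})$, the error satisfies on each $I_n$ the system
\[
	\sum_{j=0}^q m_{ij}(e^{n,j},\psi)-\i\tau_n w^\GL_i(e^{n,i},\psi)_\H-\i\beta\int_{I_n}\ell_{n,i}(g(u_1)-g(u_2),\psi)\d t=0
\]
for all $\psi\in S_h$ and $i=1,\dots,q$, with $e^{n,0}=e(t_n)$. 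The induction hypothesis is $e(t_n)=0$; the base case $n=0$ holds because both solutions satisfy $u_i^{0+}=P_h u_0$, whence $e(t_0)=0$.

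For the inductive step I would follow the rescaling from the proof of Lemma \ref{existencefull}: set $\mathbf{e}^{n,j}=(s^\GL_j)^{-1/2}e^{n,j}$ for $j=1,\dots,q$, so that the leading coefficient matrix becomes $\mathbf{M}$ from \eqref{definition_tilde_m}. Testing with $\psi=\mathbf{e}^{n,i}$, summing over $i$ and taking real parts renders the $(\cdot,\cdot)_\H$-contribution purely imaginary and, since $e^{n,0}=0$, eliminates the $m_{i0}$-term. Lemma \ref{stability} then bounds the left-hand side below by $\alpha\sum_{i=1}^q\|\mathbf{e}^{n,i}\|_{L^2(\D)}^2$. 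On the right only the nonlinear term survives; combining the Lipschitz bound above with the Cauchy–Schwarz inequality and the norm equivalence of Lemma \ref{normequiv} (using $e^{n,0}=0$ to discard the $j=0$ term) controls it by $C\,L\,\tau_n\sum_{i=1}^q\|\mathbf{e}^{n,i}\|_{L^2(\D)}^2$. Choosing $\tau_0$ so small that $C\,L(M_0)\,\tau_0<\alpha$ forces $\mathbf{e}^{n,i}=0$ for all $i$, hence $e\equiv0$ on $I_n$ and $e(t_{n+1})=0$, closing the induction and giving $u_1=u_2$ on all of $I\times\D$.

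The only place where the hypothesis $\|u_i\|_{L^\infty(I\times\D)}\le M_0$ is genuinely used is in promoting the locally Lipschitz cubic to a global Lipschitz estimate along the segment between $u_1$ and $u_2$; this is precisely what makes $\tau_0$ depend on $M_0$, and I expect it to be the only delicate point. Everything else is the coercivity-plus-smallness mechanism already employed for existence, and crucially no Grönwall iteration is required, since the scheme yields an \emph{exact} zero on each interval rather than an accumulating bound.
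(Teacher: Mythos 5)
Your proof is correct and rests on the same core mechanism as the paper's: the rescaling $\mathbf{e}^{n,j}=(s^\GL_j)^{-1/2}e^{n,j}$, the coercivity of $\mathbf{M}$ from Lemma \ref{stability}, the norm equivalence of Lemma \ref{normequiv}, and a smallness condition $C(M_0)\,\tau_0<\alpha$. There are two genuine (if modest) deviations worth recording. First, the paper routes the argument through the truncated problem: it observes that Lemma \ref{cutoff} holds without \ref{A8} when $M$ is replaced by $M_0$, so that $u_1,u_2$ both solve \eqref{auxproblem}, and then uses the global Lipschitz property of $f$. You instead use the $L^\infty$ hypothesis directly to get $|g(u_1)-g(u_2)|\le 3M_0^2\,|u_1-u_2|$ pointwise, which is cleaner and avoids the cutoff machinery altogether; both are legitimate. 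Second, the paper does not induct on $e(t_n)=0$: it performs \emph{two} energy tests (one with $\psi=e^{n,i}$ to get the telescoping estimate $\|e^{n+1}\|_{L^2(\D)}^2\le\|e^n\|_{L^2(\D)}^2+C\|e\|_{L^2(I_n\times\D)}^2$, and one with the rescaled test functions to get $\|e\|_{L^2(I_n\times\D)}^2\le C\tau_n\|e^n\|_{L^2(\D)}^2$), combines them into the recursion $\|e^{n+1}\|^2\le(1+C\tau_n)\|e^n\|^2$, and concludes from $e^0=0$. Your induction hypothesis $e(t_n)=0$ kills the $m_{i0}$-term at the outset, so a single test already forces $\mathbf{e}^{n,i}=0$ for all $i$ and hence $e\equiv 0$ on $I_n$ (a degree-$q$ polynomial vanishing at $q+1$ nodes); this removes one of the two tests and the final recursion entirely. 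The one point to state explicitly if you write this up is that the constant in your smallness condition depends only on $M_0$, the quadrature data and the constants of Lemma \ref{normequiv}, not on $n$ or $h$, so that a single $\tau_0$ works for all intervals.
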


\sect{Numerical experiments}
\label{sec:numerics}

This section is devoted to numerical experiments illustrating the performance of the presented cG($q$)-methods for the time discretization in the case of the rotating GPE for $d = 2$. In the experiments the cG($q$)-method is combined with a standard $\mathcal{P}^1$-Lagrange finite element discretization in space. We investigate the convergence behavior w.r.t. the time discretization and its approximation quality over larger time scales. This is done by simulating the dynamics of a rotating BEC in a particular model with an anisotropic trapping potential. More precise, we consider the GPE
\begin{align} \label{numericalproblem}
	\i \partial_t u = - \Delta u - \Omega \L_z u + Vu + \beta |u|^2 u, \quad \Omega = 1.6 \quad \text{and} \quad \beta = 200,
\end{align}
on a square $\D = (-R,R)^2$ with radius $R > 0$ and for times $t \in [0,T]$. We assume that the condensate is trapped in an anisotropic potential given by
\begin{align} \label{parameter2}
	V(\x) = (\gamma_x x)^2 + (\gamma_y y)^2, \quad \gamma_x = 0.9, \quad \gamma_y = 1.1.
\end{align}
For the initial value at $t = 0$ we choose the ground sate with a center vortex that minimizes the energy $E_0(u) := \frac{1}{2} \int_{\D} |\nabla u|^2 - \Omega \, \overline{u} \, \L_z u + V_0 |u|^2 + \frac{\beta}{2}|u|^4 \d\x$ with an isotropic quadratic trapping potential $V_0(\x) = x^2 + y^2$. Due to the angular velocity of the condensate and its behavior as a superfluid the ground state develops quantized vortices (density singularities). Loosely speaking, the probability of finding a particle of the condensate at the center of the vortex (almost) vanishes. The vortices are depicted in Figure \ref{BECsim} and we exemplarily refer to \cite{Aftalion2006,BWM05,Fetter01} for more details on this phenomena. In all our experiments the ground state is computed using a generalized inverse iteration as e.g. formulated in \cite{BaoDu04,HenningPeterseim20}. On the computational domain $\D = (-R,R)^2$ we choose a uniform triangulation consisting of $2N_h^2$ rectangular triangles, where $N_h \in \N$. The space $S_h$ is defined to be the space of $H^1$-conforming $\mathcal{P}^1$-Lagrange finite elements associated with the triangulation. Hence $S_h$ consists of $(N_h - 1)^2$ degrees of freedom and with $h = 2R/N_h$, what we refer to as the mesh size. Hence, our assumptions \ref{A5}-\ref{A7} are satisfied (cf. \cite{BrennerScott08}). For the time discretization we use the presented cG($q$)-methods with equidistant time steps of size $\tau > 0$. The nonlinear equations in each time step are solved using a fixed point iteration with a tolerance of magnitude $\varepsilon_{\mathrm{FP}} > 0$ as described in the end of section \ref{section:reformulation}.
\begin{figure}[H]
	\centering
	\includegraphics[scale=0.5]{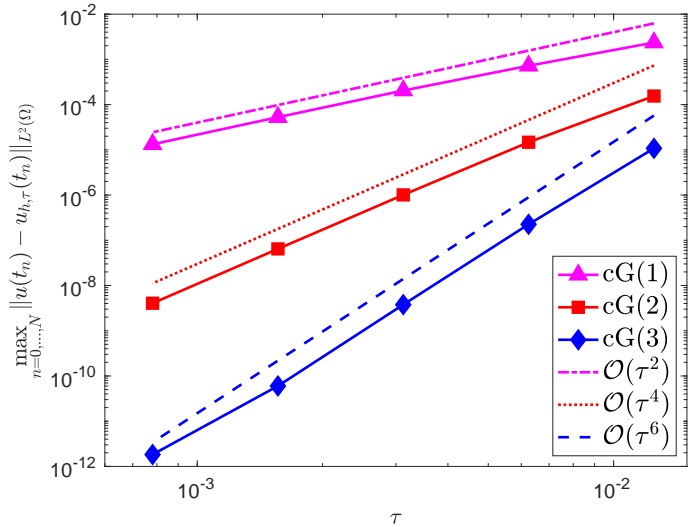}
    \caption{Errors of the cG($q$)-methods applied to the rotating GPE \eqref{numericalproblem}, \eqref{parameter2} on the domain $\D = (-20,20)^2$ with $T = 0.1$, spatial mesh size $h = 40/128$, time step sizes $\tau = T/2^{i}$, $i = 3,\dots,7$ and $q \in \{ 1,2,3 \}$.}
    \label{plot_convergence}
\end{figure}

In our first numerical experiment we investigate the convergence of the cG($q$)-method w.r.t. time for the cases $q = 1,2,3$ and on short time scales for clear experimental convergence rates. For that purpose we fix $R = 20$, $T = 0.1$, $h = 40/128$ and solve the problem \eqref{numericalproblem}, \eqref{parameter2} with time step sizes $\tau = T/2^i$, $i = 3,\dots,7$ for the three cases $q = 1,2,3$. Within the experiments the nonlinear equations in each time step are solved with a fixed point iteration up to a precision of $\varepsilon_{\mathrm{FP}} = 10^{-12}$ so that it does not effect the convergence results. As a reference solution we choose the result of the cG($3$)-method with a fine time step size $\tau = T/2^{10}$ and we calculate the error w.r.t. $L^2(\D)$ at the temporal nodes $t_n$. The results are shown in Figure \ref{plot_convergence} and, as expected, we observe convergence towards the (reference) solution of the problem \eqref{numericalproblem}, \eqref{parameter2}. In particular, we obtain numerically the superconvergence of order $2q$ of the cG($q$)-method. Although a corresponding result has not yet been proved in the general setting of the rotating GPE (i.e. $V \neq 0$ and $\Omega \neq 0$) this gives strong evidence that the superconvergence also holds true in that case. In particular, this points out the advantage of the cG($q$)-methods which can reach (at least theoretically) arbitrary high order in time and therefore allows for improvements in the time stepping compared to standard energy-preserving time integrators such as the Crank-Nicolson scheme which is only convergent of order $2$. \\
In the second experiment we solve again the time-dependent GPE problem \eqref{numericalproblem}, \eqref{parameter2} on a larger time scale with $T = 50$, but on a smaller spatial domain $\D = (-10,10)^2$ (i.e. $R = 10$) to keep the computations feasible. In this experiment, we want to track the longtime dynamics of the solution describing the evolution of a BEC. From the physical point of view, such a setting models a BEC with a constant angular momentum around the $z$-axis that is proportional to the angular velocity $\Omega$ and which is initially prepared as a ground state within an isotropic potential. Then at $t = 0$ a switch is turned on which changes the trapping potential from the isotropic to the anisotropic potential given by $V$. Due to the angular momentum and the change of the trapping potential the BEC is expected to show dynamics within the domain since the system is slightly perturbed w.r.t. the trapping potential. In particular, we expect that the BEC is deformed and the vortices move around in an unpredictable manner. This interpretation of course also applies to our first experiment, but this time we have stronger dynamics due to the larger time scale. \\

\begin{figure}[H]
\flushleft
\begin{minipage}[t]{0.3\textwidth}
\centering
\includegraphics[scale=0.27]{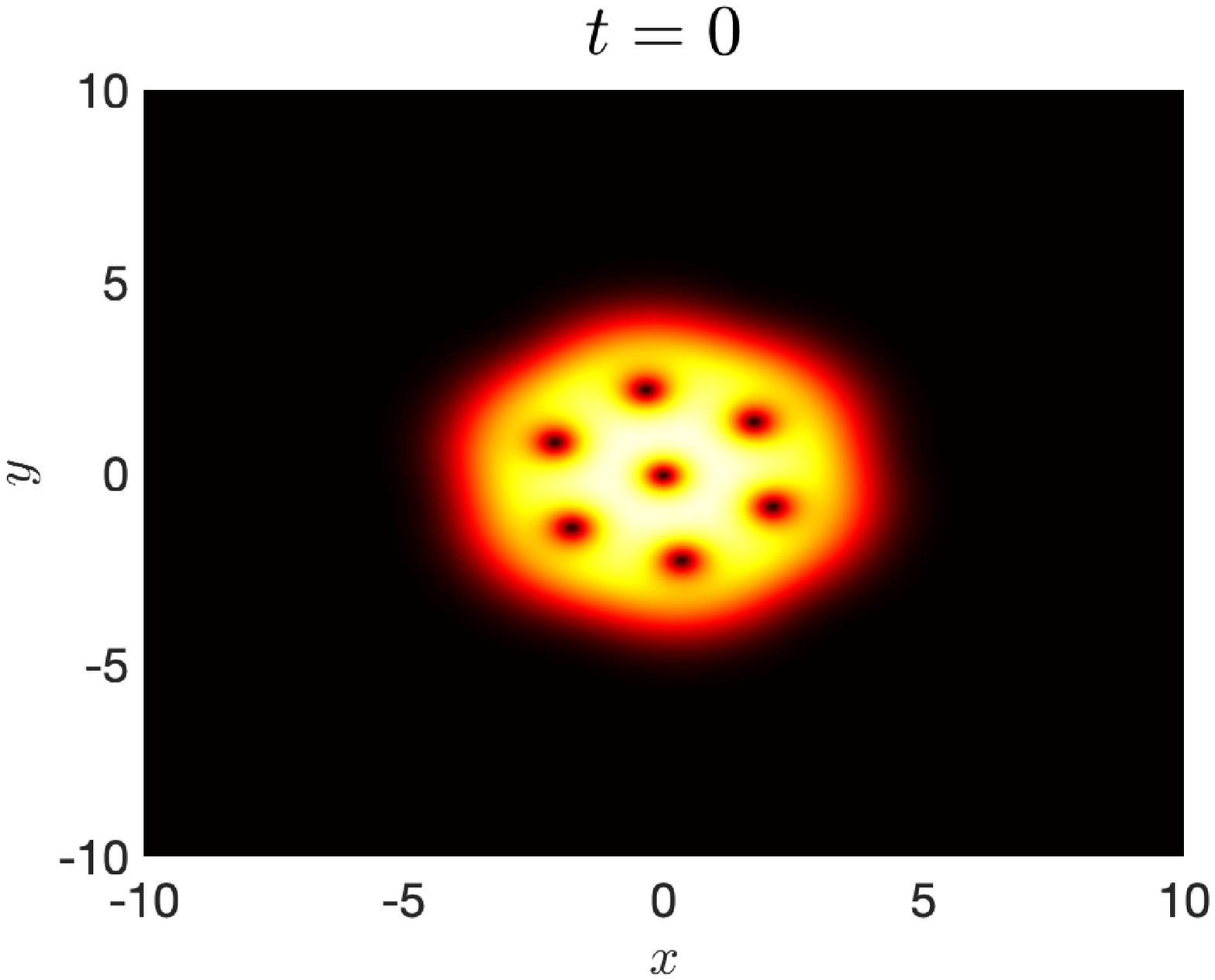}
\end{minipage}
\begin{minipage}[t]{0.3\textwidth}
\centering
\includegraphics[scale=0.27]{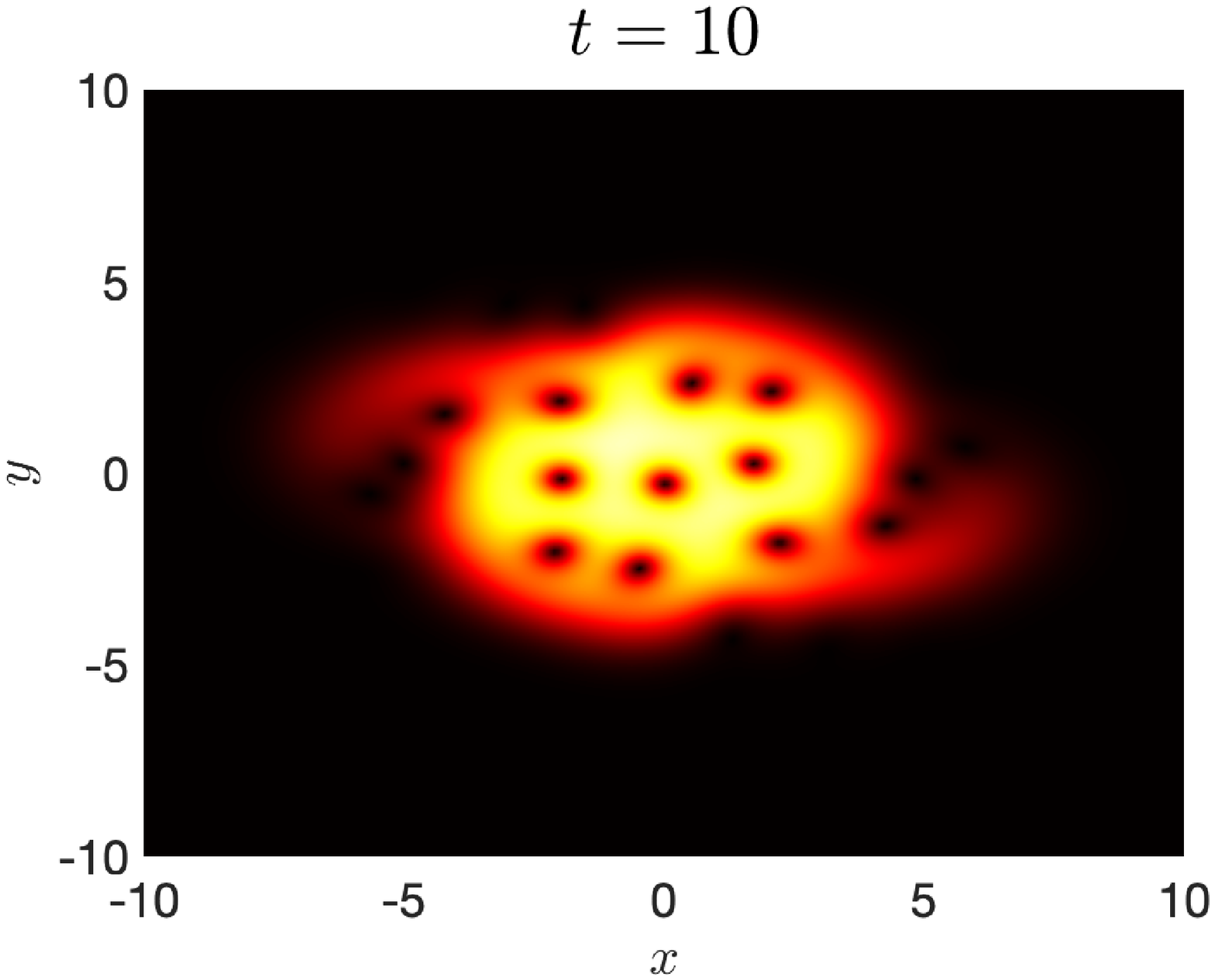}
\end{minipage}
\begin{minipage}[t]{0.3\textwidth}
\centering
\includegraphics[scale=0.27]{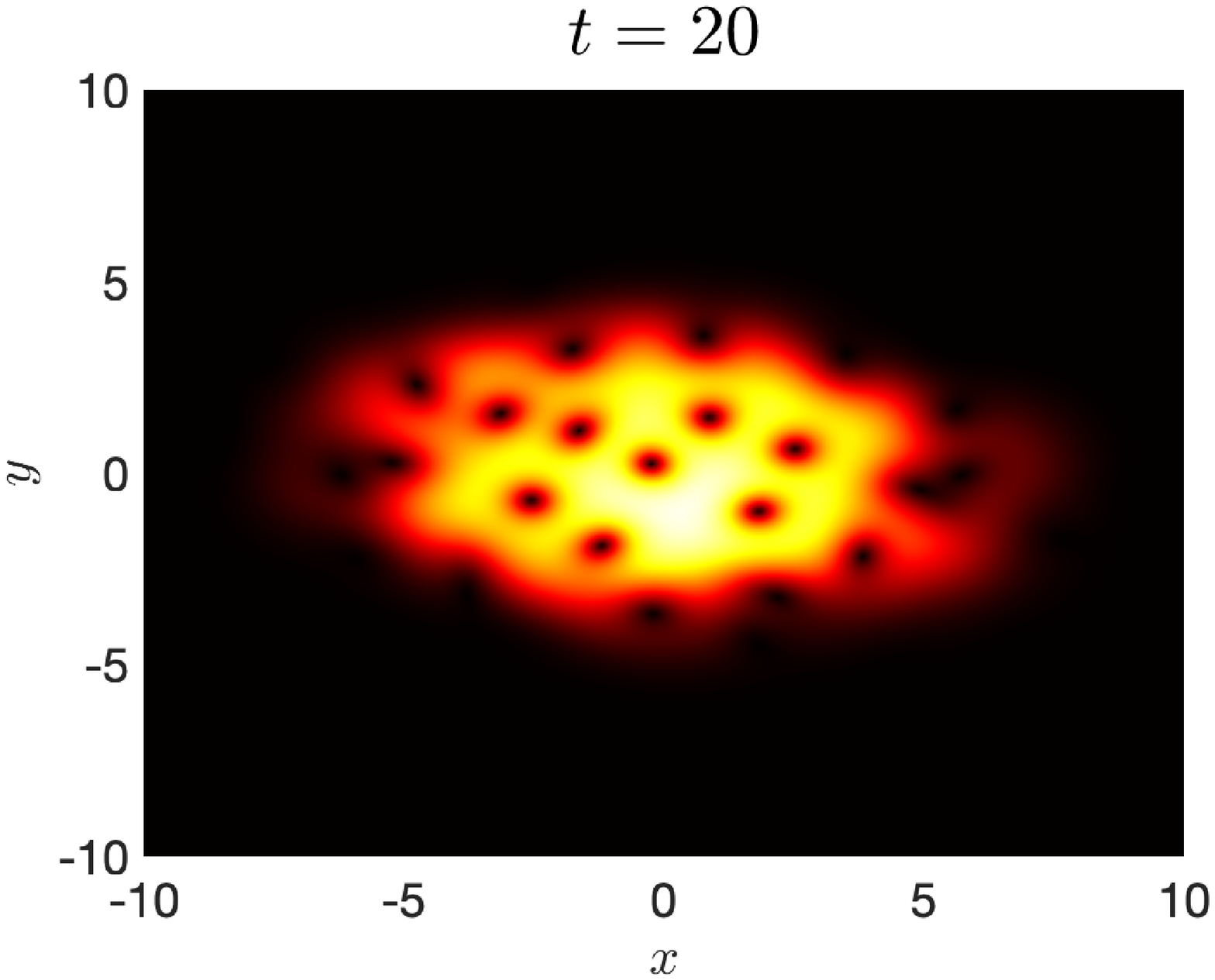}
\end{minipage}
\begin{minipage}[t]{0.3\textwidth}
\centering
\includegraphics[scale=0.27]{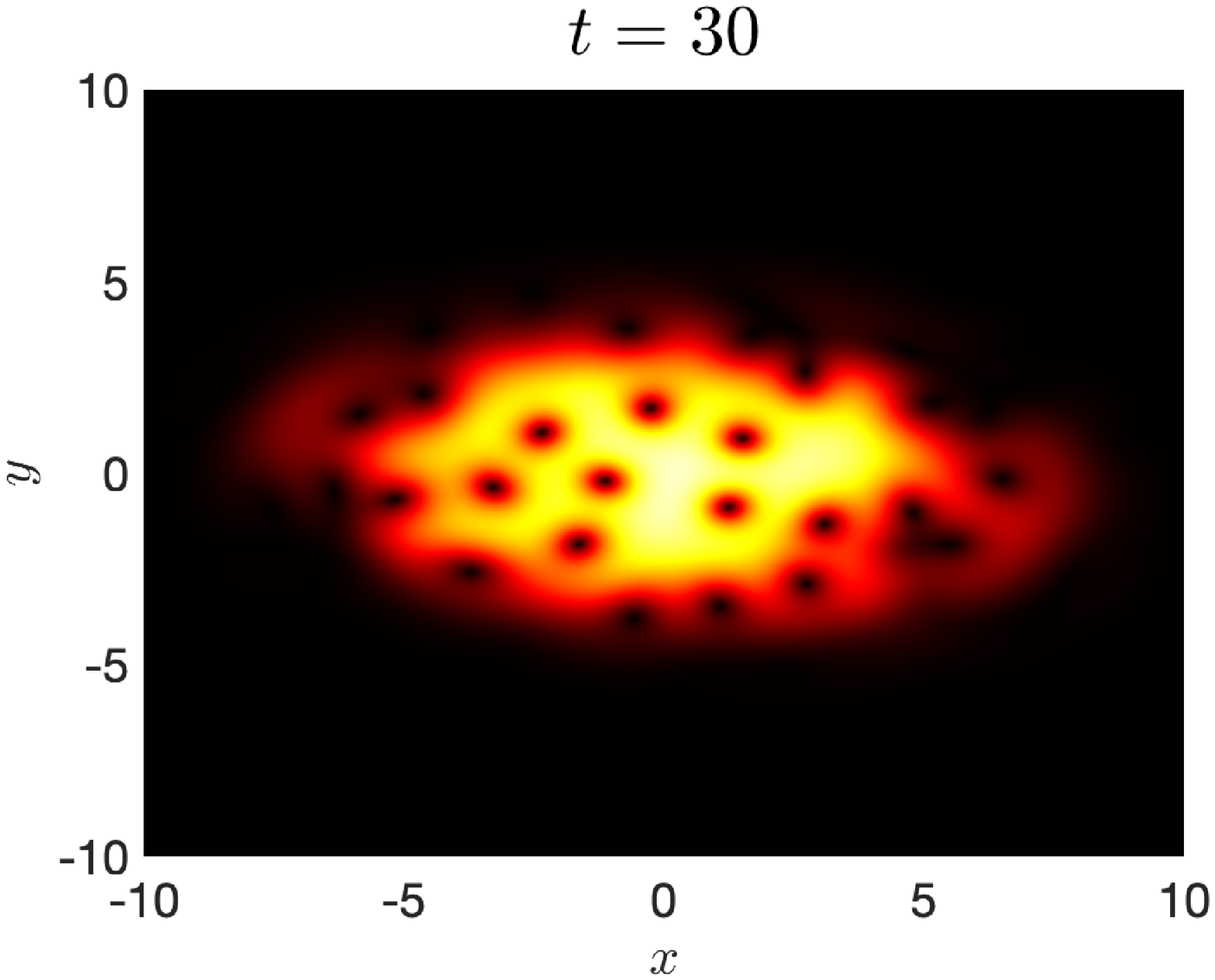}
\end{minipage}
\begin{minipage}[t]{0.3\textwidth}
\centering
\includegraphics[scale=0.27]{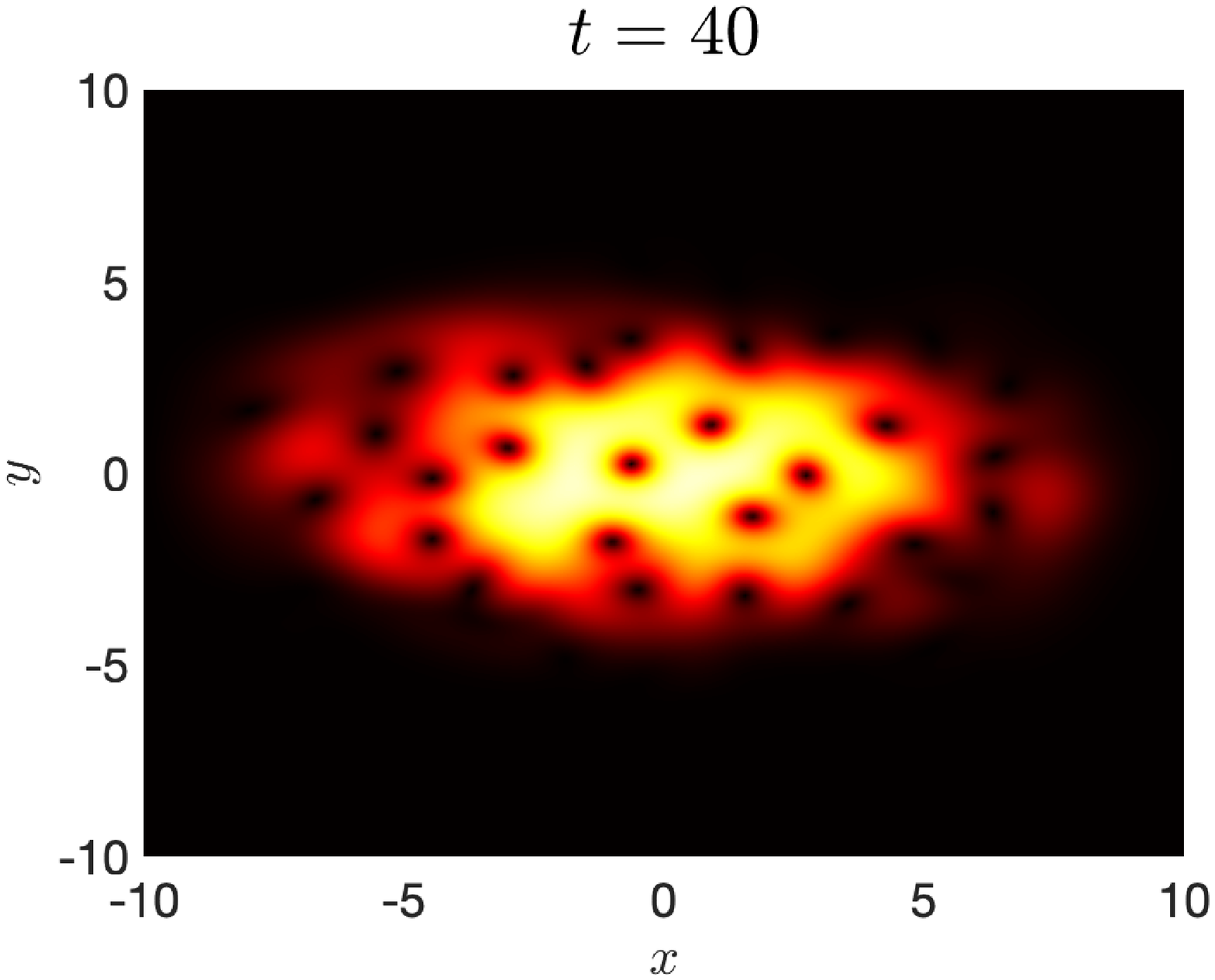}
\end{minipage}
\begin{minipage}[t]{0.3\textwidth}
\centering
\includegraphics[scale=0.27]{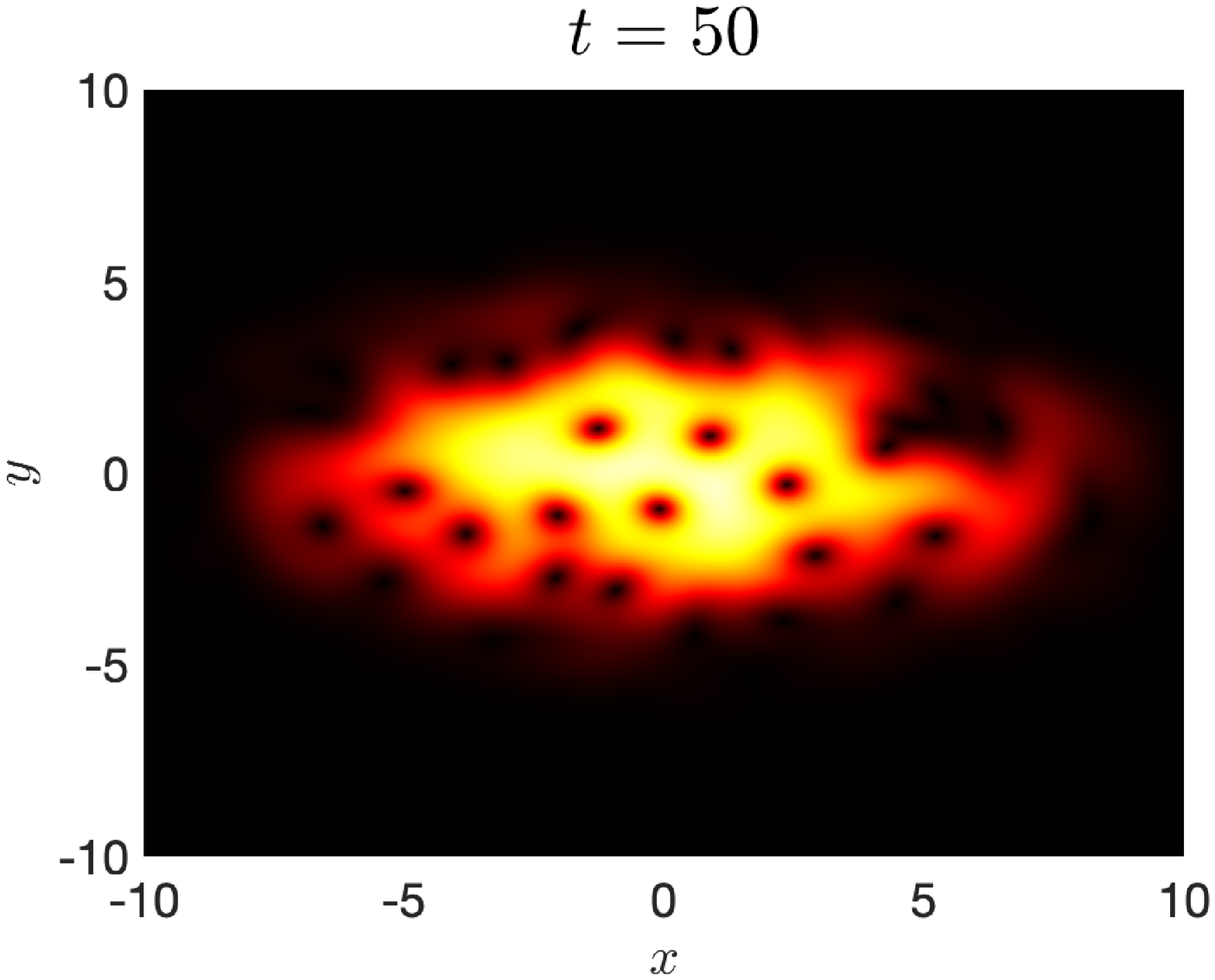}
\end{minipage}
\caption{Absolute value of the numerical solution $|u|$ of the cG($3$)-method applied to the rotating GPE \eqref{numericalproblem}, \eqref{parameter2} on the domain $\D = (-10,10)^2$ with $T = 50$, spatial mesh size $h = 20/256$, and time step size $\tau = 0.5$.} \label{BECsim}
\end{figure}

For the numerical computation 
the spatial discretization parameter is selected as $h=20/256$ which results in 65\,025 degrees of freedom for the space $S_h$ of $\mathcal{P}^1$-Lagrange finite elements on the uniform triangulation. The time stepping is executed with the uniform time step size $\tau = 0.05$ and in each time step the system of nonlinear equations is solved with the fixed point iteration up to a precision of $\varepsilon_{\mathrm{FP}} = 10^{-6}$. \\
Figure \ref{BECsim} shows the numerical result at time instances $t = 0,10,20,30,40,50$. Indeed, the expected dynamics of the condensate are reproduced by the numerical approximation. The essential observation is that the condensate can be tracked in a stable manner by the numerical scheme. This traces to the energy conservation of the cG($q$)-method since non-conserving time integrators are expected to lose the condensate after a certain period of time; see \cite{HenningMalqvist17}. In the dynamics of the simulated BEC we can observe that after the trapping potential is perturbed the vortices within the BEC start to move since the initial ground state is not an equilibrium for the perturbed system. Some of the vortices leave the center part of the BEC and other are coming inside from the region of nearly zero density. Due to the rotation of the BEC a turbulent dynamic can be obtained at the transition region from high density to nearly zero density of the condensate. \\
Summarizing, the cG($q$)-methods presented in this work form a successful high order and energy-conserving time integrator for the GPE. In particular, the numerical experiments show successful approximations of the time evolution of BECs. \\

{\bf Acknowledgements.}
The authors would like to thank the anonymous reviewers for their insightful comments that helped to improve the paper.


\appendix
\sect{Proof of a priori error estimates and uniqueness}

In the first part of the appendix we complete the proof of Lemma \ref{elliptic}:

\begin{proof}[Proof of Lemma \ref{elliptic}]
It is straightforward to show that $(\cdot,\cdot)_\H$ defines a scalar product on $H^1(\D)$ and since $V$ and $b$ are bounded the estimate $\| v \|_{\H} \le C_2 \| v \|_{H^1(\D)}$ follows immediately. 

Using Young's inequality with $\delta > 1$ we have
\begin{align*}
	\int_\D |(\nabla v - \tfrac{\i}{2}b v) |^2 \d \x & \ge \int_\D |\nabla v|^2 - |\nabla v||bv| + \frac{1}{4} |bv|^2 \d \x \ge \int_\D\frac{\delta - 1}{\delta} |\nabla v|^2 + \frac{1 - \delta}{4}|bv|^2 \d \x.
\end{align*}
Choosing $\delta = 1 + \lambda$ with $\lambda$ from \ref{A4} yields for some $C_1 > 0$
\begin{align*}
	\| v \|_\H^2 \ge \frac{\delta - 1}{\delta} \| \nabla v \|^2_{L^2(\D)} + \int_{\D} \Big( V -\frac{\delta}{4} |b|^2 \big) |v|^2 \d x \ge C_1 \| v \|_{H^1(\D)}^2.
\end{align*}
Moreover,
\begin{align*}
	(u,v)_\H & = \int_\D \overline{(\nabla u - \tfrac{\i}{2}b u)} \cdot (\nabla v - \tfrac{\i}{2}b v) + (V - \tfrac{1}{4}|b|^2 )\overline{u} v \d \mathbf{x} \\
	& = \int_\D \overline{\nabla u} \cdot \nabla v \d \x - \frac{\i}{2} \Big( \int_{\D} \overline{\nabla u} \cdot bv \d\x - \int_{\D} b\overline{u} \cdot \nabla v \d \x \Big) + \int_{\D} V \overline{u} v \d \x \\
	& = \int_\D \overline{\nabla u} \cdot \nabla v \d \x - \int_{\D} \i (b \cdot \overline{\nabla u}) v \d \x + \int_{\D} V \overline{u} v \d\x  \\
	& = \langle \H u , v \rangle.
\end{align*}
Hence i) holds. The continuity of $\H$ is obvious and the ellipticity now follows by the first part of the lemma. For the statement ii) we apply elliptic $H^2$-regularity theory (cf. \cite{GilbargTrudinger01}) to the Poisson problem $-\Delta u = \tilde{f}$ with homogeneous Dirichlet boundary condition, where $\tilde{f}:=\H u + \Omega \mathcal{L}_z u -V u \in L^2(\D)$. Then the estimate ii) follows from energy $H^1$-bounds for $u$ together with $\| u \|_{H^2(\D)} \le C \, \| \Delta u \|_{L^2(\D)}$.
\end{proof}

The second part of the appendix is devoted to the a priori error estimates stated in Corollary \ref{aprioriestimates}. The estimates extend the results from \cite{Makridakis99} to the general case of the GPE with $\Omega \neq 0$, $V \neq 0$ and $d \le 3$. We only show the key steps to derive the $L^\infty(I,L^2(\D))$-estimate since the $L^\infty(I,H^1(\D))$-estimate follows the same strategy. The superconvergence is just a slight generalization to the case $d = 3$ of the result in \cite{Makridakis99} and is therefore omitted. So for the $L^\infty(I,L^2(\D))$-error estimates we split the error into
\begin{align*}
	u - \ut_{h,\tau} = u - \I_\tau^\Lo P_h u + e_h, \quad e_h = \I_\tau^\Lo P_h u - \ut_{h,\tau}.
\end{align*}
Then the estimates in \ref{A5} and the interpolation estimate \eqref{LagrangeEstimate} imply for $t \in I_n$
\begin{align} \label{Linftyappendix}
\begin{split}
	\| u - \I_\tau^\Lo P_h u \|_{L^\infty(I_n,L^2(\D))} & \le \| u - \I_\tau^\Lo u \|_{L^\infty(I_n,L^2(\D))} + \| \I_\tau^\Lo u - P_h \I_t^\Lo u \|_{L^\infty(I_n,L^2(\D))} \\
	& \le C \tau_n^{q+1} \| \partial_t^{q+1} u \|_{L^\infty(I_n,L^2(\D))} + C h^{r + 1} \| u \|_{L^\infty(I_n,H^{r+1}(\D))}.
\end{split}
\end{align}
Moreover, we have by the same arguments
\begin{align} \label{L2appendix}
\begin{split}
	\| u - \I_\tau^\Lo P_h u \|_{L^2(I_n \times \D)} & \le \| u - \I_\tau^\Lo u \|_{L^2(I_n \times \D)} + \| \I_\tau^\Lo u - P_h \I_t^\Lo u \|_{L^2(I_n \times \D)} \\
	& \le C \tau_n^{q+1} \| \partial_t^{q+1} u \|_{L^2(I_n \times \D)} + C \tau_n^{1/2} h^{r + 1} \| u \|_{L^\infty(I_n,L^2(\D))}.
\end{split}
\end{align}
So it remains to estimate $e_h$. As before, we introduce the notation $e_h^n : = e_h(t_n)$ and $e_h^{n,i} : = e_h(t^\GL_{n,i})$ for $n = 0,\dots,N-1$, $i = 1,\dots,q$. Then a short computation shows that $e_h$ satisfies the error equation
\begin{align} \label{erroreqappendix}
\begin{split}
	& \sum_{j = 0}^q m_{ij} (e_{h}^{n,j}, \psi) - \i \tau_n w_i^\GL(e_{h}^{n,i},\psi)_\H - \i \beta \int_{I_n} \ell_{n,i}(f(\I_\tau^\Lo P_h u) - f(\ut_{h,\tau}),\psi) \d t \\
	& = (A^{n,i} - \i B^{n,i} - C^{n,i}, \psi) - \i \beta \int_{I_n} \ell_{n,i}(f(\I_\tau^\Lo P_h u) - f(u),\psi) \d t
\end{split}
\end{align}
for all $\psi \in S_h$, $i = 1,\dots,q$ and $n = 0,\dots, N-1$ where
\begin{align*}
	A^{n,i} & := \sum_{j=0}^q w^\Lo_{n,j} \ell_{n,i}'(t^\Lo_{n,j}) u(t^\Lo_{n,j}) -  \int_{I_n} \ell_{n,i}' u \d t, \\
	B^{n,i} & := \sum_{j=0}^q w^\Lo_{n,j} \ell_{n,i}(t^\Lo_{n,j}) \H u(t^\Lo_{n,j}) - \int_{I_n} \ell_{n,i} \H u \d t \\
	C^{n,i} & = \ell_{n,i}(t_{n+1})(u - P_h u )(t_{n+1}) - \sum_{j = 0}^q w_{n,j}^\Lo \ell_{n,i}'(t_{n,j}^\Lo) (u - P_h u)(t_{n,j}^\Lo) - \ell_{n,i}(t_{n})(u - P_h u)(t_n).
\end{align*}
We have the following estimates.

\begin{lemma} \label{ABCest}
Let the assumptions \ref{A1}-\ref{A8} be fulfilled and assume $u,\partial_t u \in L^\infty(I,H^{r+1}(\D))$. Then there are constants $C,\tau_0,h_0 > 0$ such that for all $0 < \tau_n < \tau_0$, $0 < h < h_0$, $i = 1,\dots,q$ and $n = 0,\dots,N-1$ there hold
\begin{align*}
	\| A^{n,i} \|_{L^2(\D)} & \le C \tau_n^{q + \frac{3}{2}} \| \partial_t^{q+2} u \|_{L^2(I_n \times \D)}, \\
	\| B^{n,i} \|_{L^2(\D)} & \le C \tau_n^{q + \frac{3}{2}} \| \partial_t^{q+1} \H u \|_{L^2(I_n \times \D)}, \\
	\| C^{n,i} \|_{L^2(\D)} & \le C \tau_n^{1/2} h^{r+1} \| \partial_t u \|_{L^2(I_n, H^{r+1}(\D))}.
\end{align*}
\end{lemma}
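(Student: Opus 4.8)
The plan is to treat the three quantities separately, noting that $A^{n,i}$ and $B^{n,i}$ are literally the quantities from Lemma~\ref{ABest} with the spatial gradient removed, so that only the genuinely new term $C^{n,i}$ requires fresh work. For $B^{n,i}$ I would observe that $\ell_{n,i}\,\I^\Lo_\tau\H u\in\mathcal{P}^{2q-1}(I_n)$, so exactness of the $(q+1)$-point Gauss--Lobatto rule gives $B^{n,i}=\int_{I_n}\ell_{n,i}(\I^\Lo_\tau-I)\H u\d t$; pulling the $L^2(\D)$-norm inside the time integral and applying Cauchy--Schwarz in time together with the interpolation estimate \eqref{LagrangeEstimate} yields $\|B^{n,i}\|_{L^2(\D)}\le(\int_{I_n}|\ell_{n,i}|^2\d t)^{1/2}\,\|(I-\I^\Lo_\tau)\H u\|_{L^2(I_n\times\D)}\le C\tau_n^{q+3/2}\|\partial_t^{q+1}\H u\|_{L^2(I_n\times\D)}$. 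For $A^{n,i}$ the same computation with the augmented interpolant $\I^{\Lop}_\tau$ (so that $\ell_{n,i}'\,\I^{\Lop}_\tau u\in\mathcal{P}^{2q-1}(I_n)$) gives $A^{n,i}=\int_{I_n}\ell_{n,i}'(\I^{\Lop}_\tau-I)u\d t$, and the factor $(\int_{I_n}|\ell_{n,i}'|^2\d t)^{1/2}\le C\tau_n^{-1/2}$ combined with $\|(I-\I^{\Lop}_\tau)u\|_{L^2(I_n\times\D)}\le C\tau_n^{q+2}\|\partial_t^{q+2}u\|_{L^2(I_n\times\D)}$ produces the stated $\tau_n^{q+3/2}$ bound. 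These two estimates are verbatim copies of the proof of Lemma~\ref{ABest}.

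The real work is $C^{n,i}$. Writing $\eta:=u-P_hu$ and recalling that the Gauss--Lobatto nodes include the endpoints $t_n=t^\Lo_{n,0}$ and $t_{n+1}=t^\Lo_{n,q}$, every evaluation of $\eta$ occurring in $C^{n,i}$ is at a node of $\I^\Lo_\tau$ and may therefore be replaced by the corresponding value of $\I^\Lo_\tau\eta$. Since $\ell_{n,i}\,\I^\Lo_\tau\eta\in\mathcal{P}^{2q-1}(I_n)$, the fundamental theorem of calculus gives $\ell_{n,i}(t_{n+1})(\I^\Lo_\tau\eta)(t_{n+1})-\ell_{n,i}(t_n)(\I^\Lo_\tau\eta)(t_n)=\int_{I_n}\ell_{n,i}'\,\I^\Lo_\tau\eta\d t+\int_{I_n}\ell_{n,i}\,\partial_t(\I^\Lo_\tau\eta)\d t$, and because $\ell_{n,i}'\,\I^\Lo_\tau\eta\in\mathcal{P}^{2q-2}(I_n)$ the Gauss--Lobatto rule integrates the first term exactly, turning it into precisely the quadrature sum appearing in $C^{n,i}$. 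Collecting terms collapses the three-piece expression into the single integral $C^{n,i}=\int_{I_n}\ell_{n,i}\,\partial_t(\I^\Lo_\tau\eta)\d t$, whence Cauchy--Schwarz in time gives $\|C^{n,i}\|_{L^2(\D)}\le C\tau_n^{1/2}\,\|\partial_t(\I^\Lo_\tau\eta)\|_{L^2(I_n\times\D)}$.

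It remains to show $\|\partial_t(\I^\Lo_\tau\eta)\|_{L^2(I_n\times\D)}\le C\|\partial_t\eta\|_{L^2(I_n\times\D)}$, and this is the step I expect to be the main obstacle, since $\I^\Lo_\tau$ and $\partial_t$ do not commute and $\eta$ is not polynomial in time. The trick is that $\I^\Lo_\tau$ reproduces the time-constant function $\eta(t_n)$, so $\partial_t\I^\Lo_\tau\eta=\partial_t\I^\Lo_\tau(\eta-\eta(t_n))$, which now acts on a genuine polynomial: the temporal inverse estimate on $\mathcal{P}^q(I_n)$ contributes a factor $\tau_n^{-1}$, while the bounded Lebesgue constant of $\I^\Lo_\tau$ gives $\|\I^\Lo_\tau(\eta-\eta(t_n))\|_{L^\infty(I_n,L^2(\D))}\le C\|\eta-\eta(t_n)\|_{L^\infty(I_n,L^2(\D))}\le C\tau_n^{1/2}\|\partial_t\eta\|_{L^2(I_n\times\D)}$, hence $\|\I^\Lo_\tau(\eta-\eta(t_n))\|_{L^2(I_n\times\D)}\le C\tau_n\|\partial_t\eta\|_{L^2(I_n\times\D)}$, and the two factors combine to the claimed constant. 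Finally, since $P_h$ is independent of time, $\partial_t\eta=(I-P_h)\partial_t u$, so assumption~\ref{A5} with $s=r+1$ yields $\|\partial_t\eta\|_{L^2(I_n\times\D)}\le Ch^{r+1}\|\partial_t u\|_{L^2(I_n,H^{r+1}(\D))}$, which together with the $\tau_n^{1/2}$ prefactor completes the bound for $C^{n,i}$. The regularity $u,\partial_t u\in L^\infty(I,H^{r+1}(\D))$ assumed in the lemma is exactly what renders all right-hand sides finite.
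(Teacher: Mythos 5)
Your proof is correct. The $A^{n,i}$ and $B^{n,i}$ estimates are, as you say, verbatim the argument of Lemma \ref{ABest} with the gradient stripped off, and the paper itself dispatches them with exactly that remark. For $C^{n,i}$, however, you take a genuinely different route from the paper. The paper reuses the device from the proof of Lemma \ref{consistencyfull}: since Gauss--Lobatto exactness gives $\ell_{n,i}(t_{n+1})-\sum_j w^\Lo_{n,j}\ell_{n,i}'(t^\Lo_{n,j})-\ell_{n,i}(t_n)=0$, the coefficients of the functional defining $C^{n,i}$ sum to zero, so $C^{n,i}$ telescopes into $\sum_{j=1}^q\beta_{ij}\bigl(\eta(t^\Lo_{n,j})-\eta(t^\Lo_{n,j-1})\bigr)=\sum_j\beta_{ij}\int_{t^\Lo_{n,j-1}}^{t^\Lo_{n,j}}(I-P_h)\partial_t u\d t$, and \ref{A5} finishes in one line. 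You instead insert the interpolant $\I^\Lo_\tau\eta$ at the nodes, integrate by parts, and use exactness of the Lobatto rule on $\ell_{n,i}'\,\I^\Lo_\tau\eta\in\mathcal{P}^{2q-2}(I_n)$ to collapse $C^{n,i}$ into the single integral $\int_{I_n}\ell_{n,i}\,\partial_t(\I^\Lo_\tau\eta)\d t$, after which you need the extra stability estimate $\|\partial_t\I^\Lo_\tau\eta\|_{L^2(I_n\times\D)}\le C\|\partial_t\eta\|_{L^2(I_n\times\D)}$, proved via constant reproduction, the temporal inverse estimate and the Lebesgue constant. Both arguments ultimately exploit the same cancellation (the functional annihilates time-constants); the paper's telescoping is more elementary and shorter once the $\beta_{ij}$ trick is known, whereas yours is self-contained on $I_n$ and isolates a reusable $W^{1,2}$-stability property of the Gauss--Lobatto interpolant at the price of an additional (correct) technical step. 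Both land on the same bound $C\tau_n^{1/2}h^{r+1}\|\partial_t u\|_{L^2(I_n,H^{r+1}(\D))}$.
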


\begin{proof}
The estimates for $A^{n,i}$ and $B^{n,i}$ follow the same way as in the proof of Lemma \ref{ABest}. So we only prove the estimate for $C^{n,i}$. As in the proof of Lemma \ref{consistencyfull} we can find coefficients $\beta_{ij}$ such that
\begin{align*}
	C^{n,i} = \sum_{j = 1}^q \beta_{ij} \big((u - P_hu)(t_{n,j}^\Lo) - (u - P_hu)(t_{n,j-1}^\Lo)\big) = \sum_{j = 1}^q \beta_{ij} \int_{t_{n,j-1}^\Lo}^{t_{n,j}^\Lo} (\partial_t u - P_h \partial_t u)(t) \d t.
\end{align*}
Hence,
\begin{align*}
	\| C^{n,i} \|_{L^2(\D)} \le C \int_{I_n} \| \partial_t u - P_h \partial_t u \|_{L^2(\D)} \d t & \le C h^{r+1} \int_{I_n} \| \partial_t u \|_{H^{r+1}(\D)} \d t \\
	& \le C \tau_n^{1/2} h^{r+1} \| \partial_t u \|_{L^2(I_n,H^{r+1}(\D))}. 
\end{align*}
\end{proof}

Next we have the following local error estimate.

\begin{lemma} \label{localappendix}
Let the assumptions \ref{A1}-\ref{A8} be fulfilled and assume $u,\partial_t u \in L^\infty(I,H^{r+1}(\D))$. Then there are constants $C,\tau_0,h_0 > 0$ such that for all $0 < \tau_n < \tau_0$, $0 < h < h_0$ and $n = 0,\dots,N-1$ there hold
\begin{align*}
	\| e_h \|_{L^2(I_n \times \D)}^2 \le C \tau_n \| e_h^n \|_{L^2(\D)}^2 + \tau_n^2 \mathcal{E}_n
\end{align*}
where
\begin{align*}
	\mathcal{E}_n = C \tau_n^{2q + 2} \Big( \| \partial_t^{q+1} u \|_{L^2(I_n \times \D)}^2 & + \| \partial_t^{q+2} u \|_{L^2(I_n \times \D)}^2 + \| \partial_t^{q+1} \H u \|_{L^2(I_n \times \D)}^2 \Big) \\
	& + C h^{2r+2} \Big( \| \partial_t u \|_{L^2(I_n,H^{r+1}(\D))}^2 + \tau_n \| u \|_{L^\infty(I_n,H^{r+1}(\D))}^2 \Big).
\end{align*}
\end{lemma}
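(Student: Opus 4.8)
The plan is to mirror the proof of Lemma~\ref{consistencyfull}, now carried out for the error $e_h$ and with the additional space-projection-in-time consistency term $C^{n,i}$ supplied by Lemma~\ref{ABCest}. First I would set $\mathbf{e}_h^{n,j} := (s_j^\GL)^{-1/2} e_h^{n,j}$ for $j = 1,\dots,q$; this is legitimate because both $\I_\tau^\Lo P_h u$ and $\ut_{h,\tau}$ lie in $V_q$, so each $e_h^{n,j} \in S_h$. Testing the error equation~\eqref{erroreqappendix} with the admissible choice $\psi = (s_i^\GL)^{-1/2}\mathbf{e}_h^{n,i} \in S_h$, summing over $i = 1,\dots,q$ and taking real parts, the term $-\i\tau_n w_i^\GL(e_h^{n,i},\psi)_\H$ drops out since $(e_h^{n,i},e_h^{n,i})_\H$ is real, while the $j\ge1$ contributions of $\sum_j m_{ij}(e_h^{n,j},\cdot)$ assemble into $\Re\sum_{i,j=1}^q \mathbf{m}_{ij}(\mathbf{e}_h^{n,j},\mathbf{e}_h^{n,i})$, which Lemma~\ref{stability} bounds from below by $\alpha\sum_i\|\mathbf{e}_h^{n,i}\|_{L^2(\D)}^2$. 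All remaining contributions—the $j=0$ term $m_{i0}(e_h^n,\cdot)$, the two nonlinear integrals, and the consistency vector $A^{n,i}-\i B^{n,i}-C^{n,i}$—I would move to the right-hand side.

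Next I would estimate the right-hand side term by term, extracting a common factor $(\sum_i\|\mathbf{e}_h^{n,i}\|_{L^2(\D)}^2)^{1/2}$. The $j=0$ term yields a factor $C\|e_h^n\|_{L^2(\D)}$. For the nonlinearity $f(\I_\tau^\Lo P_h u) - f(\ut_{h,\tau})$ I would use the global Lipschitz estimate in Lemma~\ref{cutoff}—crucially, the cutoff makes $f$ Lipschitz on all of $\C$, so no $L^\infty$-control of the argument is needed—together with $\I_\tau^\Lo P_h u - \ut_{h,\tau} = e_h$ and $\int_{I_n}|\ell_{n,i}|^2 \le C\tau_n$, which gives a factor $C\tau_n^{1/2}\|e_h\|_{L^2(I_n\times\D)}$. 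For the interpolation nonlinearity $f(\I_\tau^\Lo P_h u)-f(u)$, Lipschitz continuity together with the bound~\eqref{L2appendix} produces $C(\tau_n^{q+3/2}\|\partial_t^{q+1}u\|_{L^2(I_n\times\D)} + \tau_n h^{r+1}\|u\|_{L^\infty(I_n,H^{r+1}(\D))})$. Finally, Cauchy--Schwarz in $i$ and Lemma~\ref{ABCest} control the consistency vector by $C(\tau_n^{q+3/2}(\|\partial_t^{q+2}u\|_{L^2(I_n\times\D)} + \|\partial_t^{q+1}\H u\|_{L^2(I_n\times\D)}) + \tau_n^{1/2}h^{r+1}\|\partial_t u\|_{L^2(I_n,H^{r+1}(\D))})$.

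To close, I would divide through by $(\sum_i\|\mathbf{e}_h^{n,i}\|_{L^2(\D)}^2)^{1/2}$, square the resulting inequality, and pass from the nodal quantities to $\|e_h\|_{L^2(I_n\times\D)}^2$ via Lemma~\ref{normequiv}, using $s_j^\GL<1$ to bound $\|e_h^{n,j}\|_{L^2(\D)}$ by $\|\mathbf{e}_h^{n,j}\|_{L^2(\D)}$. This gives $\|e_h\|_{L^2(I_n\times\D)}^2 \le C\tau_n\|e_h^n\|_{L^2(\D)}^2 + C\tau_n^2\|e_h\|_{L^2(I_n\times\D)}^2 + C\tau_n(\text{consistency})^2$, and choosing $\tau_0$ small absorbs the middle term into the left-hand side. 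Collecting orders then reproduces $\tau_n^2\mathcal{E}_n$ exactly: the $A^{n,i}$, $B^{n,i}$ and $\partial_t^{q+1}u$-interpolation contributions generate the $\tau_n^{2q+4}$ part, the $C^{n,i}$ term the $\tau_n^2 h^{2r+2}\|\partial_t u\|_{L^2(I_n,H^{r+1}(\D))}^2$ part, and the $h^{r+1}\|u\|$-interpolation term the $\tau_n^3 h^{2r+2}\|u\|_{L^\infty(I_n,H^{r+1}(\D))}^2$ part.

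Beyond the routine estimates, the main obstacle is purely the bookkeeping of orders: one must track how the several half-powers of $\tau_n$—arising from $\int_{I_n}|\ell_{n,i}|^2$, from the interpolation bound~\eqref{L2appendix}, and from the final application of Lemma~\ref{normequiv}—combine on each consistency contribution, and in particular verify that the newly appearing $C^{n,i}$ term lands precisely at the claimed $h^{2r+2}$ spatial order rather than at a lower power of $h$.
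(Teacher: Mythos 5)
Your proposal is correct and follows essentially the same route as the paper: the paper's proof likewise introduces the scaled nodal errors $\mathbf{e}_h^{n,j}=(s_j^\GL)^{-1/2}e_h^{n,j}$, tests \eqref{erroreqappendix} with $\psi=(s_i^\GL)^{-1/2}\mathbf{e}_h^{n,i}$, takes real parts, invokes Lemma \ref{stability} for coercivity, and then estimates the remaining terms via Lemma \ref{cutoff} and Lemma \ref{ABCest} exactly as in Lemma \ref{consistencyH1}. Your order bookkeeping (in particular the $\tau_n^2 h^{2r+2}$ contribution from $C^{n,i}$ versus the $\tau_n^3 h^{2r+2}$ contribution from the interpolation nonlinearity) matches the claimed form of $\tau_n^2\mathcal{E}_n$.
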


\begin{proof}
Similarly, as in the proof of Lemma \ref{consistencyH1} we introduce $\mathbf{e}_h^{n,j} = (s^\GL_j)^{-\frac{1}{2}} e_h^{n,j}$, $j = 1,\dots,q$. Then testing in \eqref{erroreqappendix} with $\psi = (s^\GL_i)^{-\frac{1}{2}} \mathbf{e}_\tau^{n,i}$, summing over $i = 1,\dots,q$ and taking real parts yields in view of Lemma \ref{stability}
\begin{align*}
	\alpha \sum_{i = 1}^q \| e_h^{n,i} \|_{L^2(\D)}^2 & \le \Re \sum_{i,j = 1}^q \mathbf{m}_{ij} (\mathbf{e}_h^{n,j},\mathbf{e}_h^{n,i}) \\
	& = - \Im \beta \sum_{i = 0}^q \int_{I_n} (s^\GL_i)^{-\frac{1}{2}} \ell_{n,i}(f(\I_\tau^\Lo P_h u) - f(\ut_{h,\tau}),\mathbf{e}_h^{n,i}) \d t \\
	& \quad + \Im \beta \sum_{i = 0}^q \int_{I_n} (s^\GL_i)^{-\frac{1}{2}} \ell_{n,i} (f(\I^\Lo_\tau P_h u) - f(u), \mathbf{e}_h^{n,i}) \d t \\
	& \quad + \Re \sum_{i = 0}^q (s^\GL_i)^{-\frac{1}{2}} (A^{n,i} - \i B^{n,i} - C^{n,i}, \mathbf{e}_h^{n,i}) \\
	& \quad - \Re \sum_{i = 0}^q (s^\GL_i)^{-\frac{1}{2}} m_{i0} \ell_{n,i}(t_n) (e_h^n,  \mathbf{e}_h^{n,i}).
\end{align*}
Estimating the terms on the right hand side using Lemma \ref{cutoff} and Lemma \ref{ABCest} and following the same strategy as in the proof of Lemma \ref{consistencyH1} yields the claim.
\end{proof}

Now we are in the position to prove the a priori error estimates w.r.t. $L^2(\D)$ stated in Corollary \ref{aprioriestimates}.

\begin{proof}[Proof of Corollary \ref{aprioriestimates} ($L^2(\D)$-estimates)]
We test the error equation \eqref{erroreqappendix} with $\psi e_h^{n,i}$, sum over $i = 1,\dots,q$ and take real parts to obtain
\begin{align*}
	\frac{1}{2} \| e_h^{n+1} \|_{L^2(\D)}^2 -  \frac{1}{2} \| e_h^{n} \|_{L^2(\D)}^2 & = \Re \sum_{i = 1}^q \sum_{j = 0}^q m_{ij} (e_h^{n,j},e_h^{n,i}) \\
	& = - \beta\, \Im \sum_{i = 1}^q \int_{I_n} \ell_{n,i} (f(\I_\tau^\Lo P_h u) - f(\ut_{h,\tau}), e_h^{n,i}) \d t \\
	& \quad + \beta\, \Im \sum_{i = 1}^q \int_{I_n} \ell_{n,i} (f(\I_\tau^\Lo P_h u) - f(u), e_h^{n,i}) \d t \\
	& \quad + \Re \sum_{i = 1}^q (A^{n,i} -\i B^{n,i} - C^{n,i}, e_h^{n,i}).
\end{align*}
Similarly, as in the proof of Lemma \ref{semiH1estimate} we estimate the terms on the right hand side using Lemma \ref{cutoff} by
\begin{align*}
	\Big| \beta\, \Im \sum_{i = 1}^q \int_{I_n} \ell_{n,i} (f(\I_\tau^\Lo P_h u) - f(\ut_{h,\tau}), e_h^{n,i}) \d t \Big| \le C \| e_h \|_{L^2(I_n \times \D)}^2
\end{align*}
and with \eqref{L2appendix} by
\begin{align*}
	\Big| \beta\, \Im \sum_{i = 1}^q \int_{I_n} & \ell_{n,i} (f(\I_\tau^\Lo P_h u) - f(u), e_h^{n,i}) \d t \Big| \\
	& \le C \tau_n^{2q+2} \| \partial_t^{q+1} u \|_{L^2(I_n \times \D)}^2 + C \tau_n h^{2r + 2} \| u \|_{L^\infty(I_n,H^{r+1}(\D))}^2 + C \| e_h \|_{L^2(I_n \times \D)}^2 \\
	& \le \mathcal{E}_n +  C \| e_h \|_{L^2(I_n \times \D)}^2.
\end{align*}
The last term is estimated using Lemma \ref{ABCest} by
\begin{align*}
	\Big| \Re \sum_{i = 1}^q (A^{n,i} -\i B^{n,i} - C^{n,i}, e_h^{n,i}) \Big| \le \mathcal{E}_n + C \| e_h \|^2_{L^2(I_n \times \D)}.
\end{align*}
With Lemma \ref{localappendix} we therefore obtain the recursion
\begin{align*}
	\| e_h^{n+1} \|_{L^2(\D)}^2 \le \| e_h^n \|_{L^2(\D)}^2 + C \| e_h \|_{L^2(I_n \times \D)}^2 + \mathcal{E}_n \le (1 + C\tau_n) \| e_h^n \|_{L^2(\D)}^2 + (1 + C\tau_n^2) \mathcal{E}_n
\end{align*}
which yields
\begin{align*}
	\| e_h^n \|_{L^2(\D)}^2 \le (1 + C\tau_n)^n \sum_{m = 0}^{n-1} \mathcal{E}_m \le e^{Ct_{n+1}} \sum_{m = 0}^{n-1} \mathcal{E}_m.
\end{align*}
Thus, we have
\begin{align*}
	\| e_h \|_{L^2(I_n \times \D)}^2 \le C \tau_n \| e_h^n \|_{L^2(\D)}^2 + \tau_n^2 \mathcal{E}_n \le C \tau_n \sum_{m = 0}^n \mathcal{E}_m.
\end{align*}
Now the inverse inequality $\| v \|_{L^\infty(I_n)} \le C \tau_n^{-1/2} \| v \|_{L^2(I_n)}$ for $v \in \mathcal{P}^{q}(I_n)$ implies for all $n = 0,\dots,N-1$
\begin{align*}
	\| e_h \|_{L^\infty(I_n,L^2(\D))} & \le C \tau_n^{-1/2} \| e_h \|_{L^2(I_n \times \D)} \le C \Big( \sum_{m = 0}^n \mathcal{E}_m \Big)^{1/2} \\
	& \le C \tau^{q + 1} \big( \| \partial_t^{q+1} u \|_{L^2(I \times \D)} + \| \partial_t^{q+2} u \|_{L^2(I \times \D)} + \| \partial_t^{q+1} \H u \|_{L^2(I \times \D)} \big) \\
	& \quad + C h^{r + 1} \big( \| \partial_t u \|_{L^2(I,H^{r+1}(\D))} + \| u \|_{L^\infty(I,H^{r+1}(\D))}  \big).
\end{align*}
Finally, thanks to Theorem \ref{mainresult} we have $u_{h,\tau} = \ut_{h,\tau}$ and we obtain
\begin{align*}
	\| u - u_{h,\tau} \|_{L^\infty(I,L^2(\D))} & \le \| u - \I_\tau^\Lo P_h u \|_{L^\infty(I,L^2(\D))} + \| e_h \|_{L^\infty(I,L^2(\D))} \\
	& \le C \tau^{q + 1} \big( \| \partial_t^{q+1} u \|_{L^\infty(I \times \D)} + \| \partial_t^{q+2} u \|_{L^\infty(I \times \D)} + \| \partial_t^{q+1} \H u \|_{L^\infty(I \times \D)} \big) \\
	& \quad + C h^{r + 1} \big( \| \partial_t u \|_{L^2(I,H^{r+1}(\D))} + \| u \|_{L^\infty(I,H^{r+1}(\D))}  \big) \\
	& \le C(u) (\tau^{q+1} + h^{r+1}).
\end{align*}
\end{proof}

Finally we give the proof of the uniqueness for the fully-discrete approximation satisfying \eqref{cGscheme} which is stated in Lemma \ref{uniqueness}.

\begin{proof}[Proof of Lemma \ref{uniqueness}]
We note that the assertion of Lemma \ref{cutoff} still holds true for sufficiently large $C > 0$ when dropping the assumption \ref{A8} and replacing $M$ by $M_0$ (cf. \cite[Lemma 4.1]{Makridakis98}). Thus we can choose the cutoff function $f$ in \eqref{auxproblem} such that the assertions in Lemma \ref{cutoff} are satisfied with $M = M_0$ and such that $u_1, u_2$ solve \eqref{auxproblem}. Let $e := u_1 - u_2$ and $e^n := e(t_n)$, $e^{n,i} := e(t^\GL_{n,i})$ for $n = 0,\dots,N-1$ and $i = 1,\dots,q$. Then we have $e^0 = 0$ and $e$ satisfies
\begin{align} \label{proofuniqueness:eq1}
	\sum_{j = 0}^q m_{ij} ( e^{n,j}, \psi) - \i \tau_n w_i^\GL (e^{n,i}, \psi)_\H = \i \beta \int_{I_n} \ell_{n,i} (f(u_1) - f(u_2), \psi ) \d t
\end{align}
for all $\psi \in S_h$ and $i = 1,\dots,q$ and $n = 0,\dots,N-1$. Now testing in \eqref{proofuniqueness:eq1} with $\psi = e^{n,i}$, summing over $i=1,\dots,q$, and taking real part gives with the same arguments as in the previous sections
\begin{align} \label{proofuniqueness:eq2}
\begin{split}
	\frac{1}{2} \| e^{n+1} \|_{L^2(\D)}^2 - \frac{1}{2} \| e^{n} \|_{L^2(\D)}^2 & = \Re \sum_{i = 1}^q \sum_{j = 0}^q m_{ij} (e^{n,j},e^{n,i}) \\
	& \le \Big| \beta \sum_{i = 1}^q \int_{I_n} \ell_{n,i} (f(u_1) - f(u_2) , e^{n,i}) \d t \Big| \\
	& \le C \int_{I_n} \| f(u_1) - f(u_2) \|_{L^2(\D)} \d t  \left( \sum_{i = 1}^q \| e^{n,i} \|_{L^2(\D)} \right) \\
	& \le C \tau_n^{1/2} \Big( \int_{I_n} \| e \|_{L^2(\D)}^2 \d t \Big)^{1/2}  \Big( \sum_{i = 1}^q \| e^{n,i} \|_{L^2(\D)}^2 \Big)^{1/2} \le C \| e \|_{L^2(I_n \times \D)}^2.
\end{split}
\end{align}
Defining $\mathbf{e}^{n,j} = (s_j^\GL)^{-\frac{1}{2}} e^{n,j}$, $j = 1,\dots,q$ and testing in \eqref{proofuniqueness:eq1} with $\psi = (s_i^\GL)^{-\frac{1}{2}} \mathbf{e}^{n,i}$, summing over $i = 1,\dots,q$ and taking real parts gives with similar arguments as in the previous sections
\begin{align*}
	\Re \sum_{i,j = 1}^q \mathbf{m}_{ij} (\mathbf{e}^{n,j}, \mathbf{e}^{n,i}) & \le \Big| \beta \sum_{i = 1}^q (s_i^\GL)^{-\frac{1}{2}} \int_{I_n} \ell_{n,i} (f(u_1) - f(u_2), \mathbf{e}^{n,i} ) \d t \Big| + \Big| \sum_{i = 1}^q m_{i0} (e^n,\mathbf{e}^{n,i}) \Big| \\
	& \le C \big( \tau_n^{1/2} \| e \|_{L^2(I_n \times \D)} + \| e^n \|_{L^2(\D)} \big) \Big( \sum_{i = 1}^q \| \mathbf{e}^{n,i} \|_{L^2(\D)}^2 \Big)^{1/2}.
\end{align*}
With Lemma \ref{stability} we therefore have shown for some $\alpha > 0$
\begin{align*}
	\alpha \sum_{i = 0}^q \| \mathbf{e}^{n,i} \|_{L^2(\D)}^2 \le C \tau_n \| e \|_{L^2(I_n \times \D)}^2 + C \| e^n \|_{L^2(\D)}^2.
\end{align*}
Hence Lemma \ref{normequiv} now implies
\begin{align*}
	\| e \|_{L^2(I_n \times \D)}^2 \le C \tau_n \sum_{i = 0}^q \| e^{n,i} \|_{L^2(\D)}^2 \le C \tau_n \sum_{i = 0}^q \| \mathbf{e}^{n,i} \|_{L^2(\D)}^2 \le C\tau_n^2 \| e \|_{L^2(I_n \times \D)}^2 + C\tau_n \| e^n \|_{L^2(\D)}^2.
\end{align*}
So for $\tau_0$ sufficiently small we have $\| e \|_{L^2(I_n \times \D)}^2 \le C\tau_n \| e^n \|_{L^2(\D)}^2$ and \eqref{proofuniqueness:eq1} implies
\begin{align*}
	\| e^{n+1} \|_{L^2(\D)}^2 \le (1 + C\tau_n) \| e^{n} \|_{L^2(\D)}^2 \le (1 + C\tau_n)^{n+1} \| e^0 \|_{L^2(\D)}^2 = 0
\end{align*}
for all $n = 0,\dots,N-1$.
\end{proof}


\begin{thebibliography}{10}

\bibitem{ARV01}
J.~Abo-Shaeer, C.~Raman, J.~Vogels, and W.~Ketterle.
\newblock Observation of vortex lattices in {B}ose-{E}instein condensates.
\newblock {\em Science}, 292(5516):476--479, 2001.

\bibitem{Aftalion2006}
A.~Aftalion.
\newblock {\em Vortices in {B}ose-{E}instein condensates}, volume~67 of {\em
  Progress in Nonlinear Differential Equations and their Applications}.
\newblock Birkh\"{a}user Boston, Inc., Boston, MA, 2006.

\bibitem{ADK91}
G.~D. Akrivis, V.~A. Dougalis, and O.~A. Karakashian.
\newblock On fully discrete {G}alerkin methods of second-order temporal
  accuracy for the nonlinear {S}chr\"odinger equation.
\newblock {\em Numer. Math.}, 59(1):31--53, 1991.

\bibitem{AEM95}
M.~Anderson, J.~Ensher, M.~Matthews, C.~Wieman, and E.~Cornell.
\newblock Observation of {B}ose-{E}instein condensation in a dilute atomic
  vapor.
\newblock {\em Science}, 269(5221):198--201, 1995.

\bibitem{ABB13d}
X.~Antoine, W.~Bao, and C.~Besse.
\newblock Computational methods for the dynamics of the nonlinear
  {S}chr\"odinger/{G}ross-{P}itaevskii equations.
\newblock {\em Comput. Phys. Commun.}, 184(12):2621--2633, 2013.

\bibitem{AST21}
X.~Antoine, J.~Shen, and Q.~Tang.
\newblock Scalar auxiliary variable/{L}agrange multiplier based pseudospectral
  schemes for the dynamics of nonlinear {S}chr\"{o}dinger/{G}ross-{P}itaevskii
  equations.
\newblock {\em J. Comput. Phys.}, 437:Paper No. 110328, 19, 2021.

\bibitem{Antonelli12}
P.~Antonelli, D.~Marahrens, and C.~Sparber.
\newblock On the {C}auchy problem for nonlinear {S}chr\"{o}dinger equations
  with rotation.
\newblock {\em Discrete Contin. Dyn. Syst.}, 32(3):703--715, 2012.

\bibitem{Bao14}
W.~Bao.
\newblock Mathematical models and numerical methods for {B}ose-{E}instein
  condensation.
\newblock In {\em Proceedings of the {I}nternational {C}ongress of
  {M}athematicians -- {S}eoul 2014. {V}ol. {IV}}, pages 971--996. Kyung Moon
  Sa, Seoul, 2014.

\bibitem{BaC13}
W.~Bao and Y.~Cai.
\newblock Optimal error estimates of finite difference methods for the
  {G}ross-{P}itaevskii equation with angular momentum rotation.
\newblock {\em Math. Comp.}, 82(281):99--128, 2013.

\bibitem{BaoDu04}
W.~Bao and Q.~Du.
\newblock Computing the ground state solution of {B}ose-{E}instein condensates
  by a normalized gradient flow.
\newblock {\em SIAM J. Sci. Comput.}, 25(5):1674--1697, 2004.

\bibitem{BaJiMa03}
W.~Bao, S.~Jin, and P.~A. Markowich.
\newblock Numerical study of time-splitting spectral discretizations of
  nonlinear {S}chr\"odinger equations in the semiclassical regimes.
\newblock {\em SIAM J. Sci. Comput.}, 25(1):27--64, 2003.

\bibitem{BWM05}
W.~Bao, H.~Wang, and P.~A. Markowich.
\newblock Ground, symmetric and central vortex states in rotating
  {B}ose-{E}instein condensates.
\newblock {\em Commun. Math. Sci.}, 3(1):57--88, 2005.

\bibitem{Bartels15}
S.~Bartels.
\newblock {\em Numerical methods for nonlinear partial differential equations},
  volume~47 of {\em Springer Series in Computational Mathematics}.
\newblock Springer, Cham, 2015.

\bibitem{Bes04}
C.~Besse.
\newblock A relaxation scheme for the nonlinear {S}chr\"odinger equation.
\newblock {\em SIAM J. Numer. Anal.}, 42(3):934--952, 2004.

\bibitem{BeDeDuLa21}
C.~Besse, S.~Descombes, G.~Dujardin, and I.~Lacroix-Violet.
\newblock Energy-preserving methods for nonlinear {S}chr\"{o}dinger equations.
\newblock {\em IMA J. Numer. Anal.}, 41(1):618--653, 2021.

\bibitem{Bose24}
S.~Bose.
\newblock Plancks {G}esetz und {L}ichtquantenhypothese.
\newblock {\em Zeitschrift für Physik}, 26(1):178--181, 1924.

\bibitem{BrennerScott08}
S.~C. Brenner and L.~R. Scott.
\newblock {\em The mathematical theory of finite element methods}, volume~15 of
  {\em Texts in Applied Mathematics}.
\newblock Springer, New York, third edition, 2008.

\bibitem{Browder1965}
F.~E. Browder.
\newblock Existence and uniqueness theorems for solutions of nonlinear boundary
  value problems.
\newblock In {\em Proc. {S}ympos. {A}ppl. {M}ath., {V}ol. {XVII}}, pages
  24--49. Amer. Math. Soc., Providence, R.I., 1965.

\bibitem{Cazenave03}
T.~Cazenave.
\newblock {\em Semilinear {S}chr\"{o}dinger equations}, volume~10 of {\em
  Courant Lecture Notes in Mathematics}.
\newblock New York University, Courant Institute of Mathematical Sciences, New
  York; American Mathematical Society, Providence, RI, 2003.

\bibitem{CCW20}
J.~Cui, W.~Cai, and Y.~Wang.
\newblock A linearly-implicit and conservative {F}ourier pseudo-spectral method
  for the 3{D} {G}ross-{P}itaevskii equation with angular momentum rotation.
\newblock {\em Comput. Phys. Commun.}, 253:107160, 26, 2020.

\bibitem{DMA95}
K.~Davis, M.-O. Mewes, M.~Andrews, N.~Van~Druten, D.~Durfee, D.~Kurn, and
  W.~Ketterle.
\newblock {B}ose-{E}instein condensation in a gas of sodium atoms.
\newblock {\em Phys. Rev. Lett.}, 75(22):3969--3973, 1995.

\bibitem{Dekker84}
K.~Dekker and J.~G. Verwer.
\newblock {\em Stability of {R}unge-{K}utta methods for stiff nonlinear
  differential equations}, volume~2 of {\em CWI Monographs}.
\newblock North-Holland Publishing Co., Amsterdam, 1984.

\bibitem{Einstein24}
A.~Einstein.
\newblock Quantentheorie des einatomigen idealen {G}ases.
\newblock {\em Sitzber. Kgl. Preuss. Akad. Wiss.}, pages 261--267, 1924.

\bibitem{Ern04}
A.~Ern and J.-L. Guermond.
\newblock {\em Theory and practice of finite elements}, volume 159 of {\em
  Applied Mathematical Sciences}.
\newblock Springer-Verlag, New York, 2004.

\bibitem{Fetter01}
A.~L. Fetter and A.~A. Svidzinsky.
\newblock Vortices in a trapped dilute {B}ose-{E}instein condensate.
\newblock {\em J. Phys.: Condens. Matter}, 13(12):R135 -- R193, 2001.

\bibitem{FuHuZh22}
Y.~Fu, D.~Hu, and G.~Zhang.
\newblock Arbitrary high-order exponential integrators conservative schemes for
  the nonlinear {G}ross-{P}itaevskii equation.
\newblock {\em Comput. Math. Appl.}, 121:102--114, 2022.

\bibitem{Hairer18}
L.~Gauckler, E.~Hairer, and C.~Lubich.
\newblock Dynamics, numerical analysis, and some geometry.
\newblock In {\em Proceedings of the {I}nternational {C}ongress of
  {M}athematicians---{R}io de {J}aneiro 2018. {V}ol. {I}. {P}lenary lectures},
  pages 453--485. World Sci. Publ., Hackensack, NJ, 2018.

\bibitem{GilbargTrudinger01}
D.~Gilbarg and N.~S. Trudinger.
\newblock {\em Elliptic partial differential equations of second order}.
\newblock Classics in Mathematics. Springer-Verlag, Berlin, 2001.
\newblock Reprint of the 1998 edition.

\bibitem{Gross61}
E.~P. Gross.
\newblock Structure of a quantized vortex in boson systems.
\newblock {\em Nuovo Cimento (10)}, 20:454--477, 1961.

\bibitem{Hao07}
C.~Hao, L.~Hsiao, and H.-L. Li.
\newblock Global well posedness for the {G}ross-{P}itaevskii equation with an
  angular momentum rotational term in three dimensions.
\newblock {\em J. Math. Phys.}, 48(10):102105, 11, 2007.

\bibitem{HenningMalqvist17}
P.~Henning and A.~M{\aa}lqvist.
\newblock The finite element method for the time-dependent {G}ross-{P}itaevskii
  equation with angular momentum rotation.
\newblock {\em SIAM J. Numer. Anal.}, 55(2):923--952, 2017.

\bibitem{HeP17}
P.~Henning and D.~Peterseim.
\newblock Crank-{N}icolson {G}alerkin approximations to nonlinear
  {S}chr\"{o}dinger equations with rough potentials.
\newblock {\em Math. Models Methods Appl. Sci.}, 27(11):2147--2184, 2017.

\bibitem{HenningPeterseim20}
P.~Henning and D.~Peterseim.
\newblock Sobolev gradient flow for the {G}ross-{P}itaevskii eigenvalue
  problem: global convergence and computational efficiency.
\newblock {\em SIAM J. Numer. Anal.}, 58(3):1744--1772, 2020.

\bibitem{NLSComparison}
P.~Henning and J.~W\"{a}rneg{\aa}rd.
\newblock Numerical comparison of mass-conservative schemes for the
  {G}ross-{P}itaevskii equation.
\newblock {\em Kinet. Relat. Models}, 12(6):1247--1271, 2019.

\bibitem{HeWa21}
P.~Henning and J.~W\"{a}rneg{\aa}rd.
\newblock A note on optimal {$H^1$}-error estimates for {C}rank-{N}icolson
  approximations to the nonlinear {S}chr\"{o}dinger equation.
\newblock {\em BIT}, 61(1):37--59, 2021.

\bibitem{HeW22}
P.~Henning and J.~W\"{a}rneg{\aa}rd.
\newblock Superconvergence of time invariants for the {G}ross-{P}itaevskii
  equation.
\newblock {\em Math. Comp.}, 91(334):509--555, 2022.

\bibitem{Karakashian93}
O.~Karakashian, G.~D. Akrivis, and V.~A. Dougalis.
\newblock On optimal order error estimates for the nonlinear {S}chr\"{o}dinger
  equation.
\newblock {\em SIAM J. Numer. Anal.}, 30(2):377--400, 1993.

\bibitem{Makridakis98}
O.~Karakashian and C.~Makridakis.
\newblock A space-time finite element method for the nonlinear
  {S}chr\"{o}dinger equation: the discontinuous {G}alerkin method.
\newblock {\em Math. Comp.}, 67(222):479--499, 1998.

\bibitem{Makridakis99}
O.~Karakashian and C.~Makridakis.
\newblock A space-time finite element method for the nonlinear
  {S}chr\"{o}dinger equation: the continuous {G}alerkin method.
\newblock {\em SIAM J. Numer. Anal.}, 36(6):1779--1807, 1999.

\bibitem{LiSu13}
B.~Li and W.~Sun.
\newblock Error analysis of linearized semi-implicit {G}alerkin finite element
  methods for nonlinear parabolic equations.
\newblock {\em Int. J. Numer. Anal. Model.}, 10(3):622--633, 2013.

\bibitem{Lieb01}
E.~H. Lieb, R.~Seiringer, and J.~Yngvason.
\newblock A rigorous derivation of the {G}ross-{P}itaevskii energy functional
  for a two-dimensional {B}ose gas.
\newblock {\em Comm. Math. Phys.}, 224(1):17--31, 2001.

\bibitem{Lub08}
C.~Lubich.
\newblock On splitting methods for {S}chr\"odinger-{P}oisson and cubic
  nonlinear {S}chr\"odinger equations.
\newblock {\em Math. Comp.}, 77(264):2141--2153, 2008.

\bibitem{ORS21}
A.~Ostermann, F.~Rousset, and K.~Schratz.
\newblock Error estimates of a {F}ourier integrator for the cubic
  {S}chr\"{o}dinger equation at low regularity.
\newblock {\em Found. Comput. Math.}, 21(3):725--765, 2021.

\bibitem{Pitaevskii61}
L.~P. Pitaevskii.
\newblock Vortex lines in an imperfect {B}ose gas.
\newblock {\em Soviet Physics JETP-USSR}, 13(2):451--454, 1961.

\bibitem{San84}
J.~M. Sanz-Serna.
\newblock Methods for the numerical solution of the nonlinear {S}chroedinger
  equation.
\newblock {\em Math. Comp.}, 43(167):21--27, 1984.

\bibitem{Sanz-Serna88}
J.~M. Sanz-Serna.
\newblock Runge-{K}utta schemes for {H}amiltonian systems.
\newblock {\em BIT}, 28(4):877--883, 1988.

\bibitem{Tha12b}
M.~Thalhammer.
\newblock Convergence analysis of high-order time-splitting pseudospectral
  methods for nonlinear {S}chr\"odinger equations.
\newblock {\em SIAM J. Numer. Anal.}, 50(6):3231--3258, 2012.

\bibitem{Thomee97}
V.~Thom\'{e}e.
\newblock {\em Galerkin finite element methods for parabolic problems},
  volume~25 of {\em Springer Series in Computational Mathematics}.
\newblock Springer-Verlag, Berlin, 1997.

\bibitem{Wan14}
J.~Wang.
\newblock A new error analysis of {C}rank-{N}icolson {G}alerkin {FEM}s for a
  generalized nonlinear {S}chr\"odinger equation.
\newblock {\em J. Sci. Comput.}, 60(2):390--407, 2014.

\bibitem{Marsden88}
G.~Zhong and J.~E. Marsden.
\newblock Lie-{P}oisson {H}amilton-{J}acobi theory and {L}ie-{P}oisson
  integrators.
\newblock {\em Phys. Lett. A}, 133(3):134--139, 1988.

\bibitem{Zou01}
G.~E. Zouraris.
\newblock On the convergence of a linear two-step finite element method for the
  nonlinear {S}chr\"odinger equation.
\newblock {\em M2AN Math. Model. Numer. Anal.}, 35(3):389--405, 2001.

\bibitem{Zouraris20}
G.~E. Zouraris.
\newblock Error estimation of the relaxation finite difference scheme for the
  nonlinear {S}chr\"{o}dinger equation.
\newblock {\em SIAM J. Numer. Anal.}, 61(1):365--397, 2023.

\end{thebibliography}
\end{document}